\numberwithin{equation}{section}
\newtheorem{thm}{Theorem}[section]
\newtheorem{cor}[thm]{Corollary}
\newtheorem{lem}[thm]{Lemma}
\begin{document}

\title{Volume growth and stochastic completeness of graphs}
\author{Matthew Folz\thanks{Department of Mathematics, The University of British Columbia, 1984 Mathematics Road, Vancouver, B.C., Canada, V6T 1Z2.  {\tt mfolz@math.ubc.ca}.  Research supported by a NSERC Alexander Graham Bell Canada Graduate Scholarship.}}
\maketitle

\begin{abstract}
\noindent  Given the variable-speed random walk on a weighted graph and a metric adapted to the structure of the random walk, we construct a Brownian motion on a closely related metric graph which behaves similarly to the VSRW and for which the associated intrinsic metric has certain desirable properties.  Jump probabilities and moments of jump times for Brownian motion on metric graphs with varying edge lengths, jump conductances, and edge densities are computed.  We use these results together with a theorem of Sturm for stochastic completeness, or non-explosiveness, on local Dirichlet spaces to prove sharp volume growth criteria in adapted metrics for stochastic completeness of graphs. \\

\noindent {\it Key words:} Random walks, stochastic completeness, volume growth, intrinsic metrics, Brownian motion on metric graphs, local Dirichlet spaces. \\
\noindent {\it AMS 2010 Subject Classification:} Primary: 60G50; Secondary: 60J60, 31C25.
\end{abstract}

%

\section{Introduction} \label{S1}

Let $\Gamma=(G,E)$ be an unoriented, connected, finite or countably infinite, locally finite graph.  We assume that $\Gamma$ has neither loops nor multiple edges.  We use $d$ to denote the graph metric on $\Gamma$; given $x,y\in G$, $d(x,y)$ is equal to the number of edges in a shortest (geodesic) path between $x$ and $y$. Given $x,y\in G$, we write $x\sim y$ if $\{x,y\}\in E$.  We assume that $\Gamma$ is a weighted graph, so that associated with each $(x,y)\in G\times G$ is a nonnegative edge weight $\pi_{xy}$ which is symmetric ($\pi_{xy}=\pi_{yx}$ for $x,y\in G$) and satisfies $\pi_{xy}>0$ if and only if $\{x,y\}\in E$.  The edge weights define a measure on $G$ by setting $\pi_x := \pi(\{x\}) := \sum_{y\in G} \pi_{xy}$ for $x\in G$, and extending to all subsets of $G$ by countable additivity.  If $\pi(e)=1$ for all $e\in E$, we say that $\Gamma$ has the standard weights.  We denote weighted graphs by the pairing $(\Gamma, (\pi(e))_{e\in E})$; for brevity we write this as $(\Gamma,\pi)$. \\

On $(\Gamma,\pi)$, we consider the variable-speed continuous time simple random walk (VSRW), which we denote by $(X_t)_{t\geq 0}$.  This process has generator $\mathcal{L}_X$ given by

\begin{align*}
(\mathcal{L}_X f)(x) := \sum_{y\sim x} \pi_{xy}(f(y)-f(x)).
\end{align*}
\\
The jump probabilities for $X$ are $P(x,y) = \pi_{xy}/\pi_x$; the jump times, started at a vertex $x$, are exponentially distributed random variables with parameter $\pi_x$, so that the mean jump time is $1/\pi_x$.  This process is strong Markov.  The heat kernel of $X$ is the function $p_t(x,y) := \mathbb{P}^x(X_t=y)$, defined for $t\geq 0$ and $x,y\in G$.  A good general reference for the VSRW is the upcoming book by Barlow \cite{B}.  For analytic properties of the operator $\mathcal{L}_X$, which is sometimes referred to as the physical Laplacian, see \cite{We}. \\

A weighted graph $(\Gamma,\pi)$ is stochastically complete if the heat kernel of the VSRW on $(\Gamma,\pi)$ satisfies

\begin{equation} \label{sccrit}
\sum_{y\in G} p_t(x,y) = 1
\end{equation}
\\
for all $t\geq 0$ and $x\in G$.  If $(\pi(e))_{e\in E}$ are the standard weights, and $(\Gamma,\pi)$ is stochastically complete, we simply say that $\Gamma$ is stochastically complete.  A weighted graph which is not stochastically complete is said to be stochastically incomplete.  The heat kernel condition \eqref{sccrit} is equivalent to $X$ having infinite lifetime (or $X$ being non-explosive).  If $(J^X_n)_{n\in\mathbb{Z}_+}$ are the times between jumps of $X$, then non-explosiveness is equivalent to having, $\mathbb{P}-$a.s.,

\begin{equation*}
\sum_{n\in\mathbb{Z}_+} J^X_n = +\infty.
\end{equation*}
\\
Stochastic completeness is also equivalent to various uniqueness criteria for the heat equation on $(\Gamma,\pi)$; see \cite{Wo}, and also \cite{G2} for stochastic completeness in the related setting of Riemannian manifolds. \\

This paper is primarily concerned with the relationship between volume growth and stochastic completeness for graphs.  Generically speaking, small volume growth ensures that the VSRW does not escape from its starting point too quickly, as it implies that the process, on average, is not too much more likely to move away from its starting point than towards it.  For diffusions on a Riemannian manifold, the best possible criterion relating volume growth in the Riemannian volume and stochastic completeness of the manifold was proved by Grigor'yan in \cite{G1} (see also \cite{G2} for a more detailed exposition of results on stochastic completeness for manifolds), and is as follows:

\begin{thm}
(Grigor'yan, \cite{G1}) Let $M$ be a geodesically complete Riemannian manifold with Riemannian metric $\rho_M$ and Riemannian volume $V$.  If there exists $r_0\geq 0$ and $x_0\in M$ such that

\begin{equation} \label{VGSCM}
\int^\infty_{r_0} \frac{r}{\log V(B_{\rho_M}(x_0,r))}dr = +\infty,
\end{equation}
\\
then $M$ is stochastically complete.
\end{thm}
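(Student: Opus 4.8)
The plan is to establish Grigor'yan's theorem by constructing a suitable supersolution to the heat equation and invoking a comparison/maximum principle argument, but I should first recognize what kind of proof is being requested. The statement is Grigor'yan's classical volume-growth criterion for stochastic completeness on manifolds, cited as background motivation. Since this paper uses Sturm's theorem for stochastic completeness on local Dirichlet spaces to transfer such criteria to graphs, the natural proof sketch here is the one that underlies the analytic approach to stochastic completeness: reduce non-explosiveness to the nonexistence of nontrivial bounded solutions of a certain elliptic equation, and then rule out such solutions using the volume-growth hypothesis via an integrated weighted energy (Caccioppoli-type) estimate.

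First I would recall that stochastic completeness of $M$ is equivalent to the statement that the only bounded nonnegative solution $u$ on $M\times(0,\infty)$ of the heat equation $\partial_t u = \Delta u$ with $u(\cdot,0)=0$ is $u\equiv 0$; equivalently, for some (hence all) $\lambda>0$ the only bounded nonnegative solution of $\Delta v = \lambda v$ is $v\equiv 0$. The plan is to assume, for contradiction, that there is a nontrivial bounded solution $v$ with $0\le v\le 1$ of $\Delta v=\lambda v$, and to derive a contradiction from the divergence of the integral in \eqref{VGSCM}. The core mechanism is an a priori estimate: multiply the equation by $v\,\varphi^2$ for a well-chosen Lipschitz cutoff $\varphi$ supported in $B_{\rho_M}(x_0,R)$, integrate over $M$, and use geodesic completeness to justify integration by parts with no boundary term, obtaining an inequality of Caccioppoli form
\begin{equation*}
\lambda\int_M v^2\varphi^2\,dV + \int_M |\nabla v|^2\varphi^2\,dV \le C\int_M v^2|\nabla\varphi|^2\,dV.
\end{equation*}

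The heart of the argument, and the step I expect to be the main obstacle, is choosing the cutoff $\varphi$ and a sequence of radii so that the right-hand side can be controlled by the volume growth and driven to zero, or more precisely so that the growth of $\int_{B(x_0,R)}v^2\,dV$ forced by the reverse inequality is incompatible with the assumed subexponential-type growth of $V(B_{\rho_M}(x_0,r))$. Concretely, one sets up a differential inequality for the quantity $h(R):=\int_{B(x_0,R)}v^2\,dV$, showing that $h$ satisfies an estimate of the form $h(R)\le \frac{C}{\lambda}\,\frac{(h'(R))^2}{\text{(something)}}$ after optimizing the Lipschitz slope of $\varphi$ over the annulus $B(x_0,R+r)\setminus B(x_0,R)$; integrating this inequality and comparing against the divergence condition $\int^\infty r/\log V(B(x_0,r))\,dr=+\infty$ yields that $h$ must grow faster than the volume permits unless $v\equiv 0$. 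Making the weight function and the width of the cutoff annulus depend optimally on $R$ in a way that exactly matches the logarithmic volume term in \eqref{VGSCM} is the delicate part; a careless choice produces only the weaker exponential-volume criterion $\liminf r^{-2}\log V(B(x_0,r))<\infty$ rather than the sharp integral condition.

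An alternative, and arguably cleaner, route is the probabilistic/analytic argument via the mean exit time. One would estimate the mean exit time $E^{x_0}[\tau_{B(x_0,R)}]$ of Brownian motion from balls and show that the divergence in \eqref{VGSCM} forces these exit times to grow without bound along a sequence of radii, so that the process cannot reach infinity in finite time; the volume growth enters through a Faber--Krahn or capacity estimate bounding the exit-time moments from below in terms of $V(B(x_0,r))$. Either way, geodesic completeness is essential to rule out escape through a metric boundary, and the sharpness of the integral test reflects an exact balance between the quadratic scaling $r\,dr$ of exit times and the logarithm of the volume; I would present the supersolution/Caccioppoli version as the main line since it adapts most transparently to the Dirichlet-space setting that the rest of the paper exploits through Sturm's theorem.
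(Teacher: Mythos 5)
First, a point of order: the paper does not prove this statement at all --- it is quoted as background (Theorem 1.1, attributed to \cite{G1}), and the paper's own contribution is the graph analogue proved much later via Sturm's theorem. So your proposal must be measured against Grigor'yan's original argument, not against anything in this paper.

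Against that standard, your main line has a genuine gap at exactly the point you flag as ``the delicate part,'' and it is not merely delicate --- it is where the elliptic approach breaks down. Your reduction (stochastic completeness iff the only bounded solution $0\le v\le 1$ of $\Delta v=\lambda v$ is $v\equiv 0$) is correct and standard, and the Caccioppoli inequality $\lambda\int v^2\varphi^2\,dV \le C\int v^2|\nabla\varphi|^2\,dV$ is easily obtained (note the absence of boundary terms comes from the compact support of $\varphi$; completeness enters only in guaranteeing that such cutoffs exhaust $M$). But iterating this inequality over annuli, with $h(R):=\int_{B(x_0,R)}v^2\,dV$, gives $h(R+r)\ge(1+c\lambda r^2)h(R)$, and optimizing over the step width $r$ and the sequence of radii yields only $\log h(R)\gtrsim \sqrt{\lambda}\,R$, i.e.\ the \emph{exponential} volume criterion $V(B_r)\le e^{Cr}$; pushing further requires a quantitative lower bound on $h_\lambda(R_0)$ as $\lambda\to\infty$ (the nontrivial solution is $v_\lambda(x)=\mathbb{E}^x e^{-\lambda\zeta}$, which degenerates), and no choice of cutoff in the elliptic inequality recovers the sharp test. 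Grigor'yan's actual proof is parabolic: it establishes a uniqueness class for the Cauchy problem via an integrated maximum principle --- monotonicity of weighted integrals $\int u^2 e^{\xi(x,t)}\,dV$ with $\xi$ built from $\rho_M^2$ --- iterated over radii $R_k$ with time increments $\tau_k\asymp R_k^2/\log V(B_{R_k})$, and the hypothesis \eqref{VGSCM} is precisely what makes $\sum_k \tau_k$ diverge so that the estimate propagates over any finite time horizon. The quadratic-in-time weight is the mechanism that converts $r\,dr/\log V$ into a usable estimate, and it has no counterpart in your stationary formulation.

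Your alternative route is also flawed as stated: showing that the mean exit times $\mathbb{E}^{x_0}[\tau_{B(x_0,R)}]$ grow without bound does \emph{not} imply non-explosion. If the explosion time $\zeta$ is finite with positive probability but infinite otherwise, the mean exit times can still diverge; stochastic completeness requires $\zeta=\infty$ almost surely, not $\mathbb{E}\zeta=\infty$. So the implication ``exit-time moments grow $\Rightarrow$ the process cannot reach infinity in finite time'' fails, and a capacity or Faber--Krahn lower bound on $\mathbb{E}\tau_{B_R}$ cannot by itself close the argument.
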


This result was generalized to the setting of strongly local Dirichlet spaces by Sturm in \cite{St}; there, the Riemannian metric is replaced by the intrinsic metric associated with the Dirichlet form.  This result is not applicable to the setting of graphs, as the Dirichlet form associated with the VSRW is non-local. \\

Stochastic completeness of graphs has been a topic of substantial interest in recent years.  Attempting to modify the techniques used in the manifold setting to the setting of graphs quickly leads to difficulties, due in large part to the fact that the heat kernel of the VSRW has very different long range behavior than the heat kernel of a Riemannian manifold (see \cite{F1} and \cite{G1}).
The situation is further complicated by the fact that the Riemannian metric plays a different role for diffusions on manifolds than the graph metric does for the variable-speed random walk.  In order to obtain a criterion analogous to Grigor'yan's manifold result, it is necessary to consider metrics which are in some sense `adapted' to the structure of the VSRW; we will discuss the notion of adaptedness subsequently. \\

Previous results for stochastic completeness on graphs have focused primarily on relatively simple graphs possessing a high degree of symmetry.  Numerous results for spherically symmetric graphs and trees are contained in \cite{Wo}.  Necessary and sufficient criteria for stochastic completeness of weakly symmetric graphs are established in \cite{KLW}.  Stochastic completeness of subgraphs is discussed in \cite{KL}, and \cite{Hu1} establishes analytic criteria which are equivalent to stochastic completeness of graphs and which have analogues in the manifold setting.  Criteria for stochastic completeness in the general setting of symmetric jump processes are proved in \cite{MU}.  A proof of stochastic completeness in a particular adapted metric assuming at most exponential volume growth is given in \cite{B+2}. \\

The recent paper of Grigor'yan, Huang, and Masamune (\cite{GHM}), which also deals with symmetric jump processes, establishes a criterion for stochastic completeness on graphs if one considers volume growth with respect to the graph metric, as well as a volume growth criterion for stochastic completeness of graphs using adapted metrics.  This criterion is also obtained via different techniques in \cite{Hu2}.  However, this result is not analogous to the manifold criterion \eqref{VGSCM}, and does not produce optimal results when compared with the exact criteria in \cite{Wo} for spherically symmetric graphs or trees. \\

Our main result is as follows:

\begin{thm} \label{mainresult}
Let $(\Gamma,\pi)$ be a weighted graph, and let $\rho$ be a metric on $\Gamma$ satisfying the following two conditions:

\begin{itemize}
\item There exists $c_\rho>0$ such that $\rho(x,y) \leq c_\rho$ whenever $x\sim y$.
\item There exists $C_\rho>0$ such that 
\begin{equation*}
\sum_{y\sim x} \pi_{xy}\rho^2(x,y) \leq C_\rho,
\end{equation*}
\\
for each $x\in G$.  
\end{itemize}Denote by $B_\rho(x,r)$ the closed ball of radius $r$ centered at $x$ in the metric $\rho$, and let $|\cdot|$ denote counting measure. \\

If there exists $x_0\in G$ and $r_0\geq 0$ such that

\begin{equation*}
\int^\infty_{r_0} \frac{r}{\log|B_\rho(x_0,r)|}dr = +\infty,
\end{equation*}
\\
then $(\Gamma,\pi)$ is stochastically complete.
\end{thm}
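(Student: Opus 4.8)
The plan is to transfer the problem from the discrete graph $(\Gamma,\pi)$, where the relevant Dirichlet form is non-local (so that neither Grigor'yan's nor Sturm's criterion applies directly), onto an associated \emph{metric graph} $\Gamma_M$ carrying a genuine diffusion whose Dirichlet form is strongly local. First I would build $\Gamma_M$ by replacing each edge $\{x,y\}\in E$ by a copy of the interval $[0,\rho(x,y)]$, glueing these intervals at the vertices of $G$; the first adaptedness hypothesis ($\rho(x,y)\le c_\rho$ for $x\sim y$) guarantees that all edges have uniformly bounded length, which prevents the metric structure from degenerating. On each such cable I would prescribe an edge density and a jump conductance, chosen so that the Brownian motion $(Y_t)$ defined by the resulting strongly local Dirichlet form, when observed at the vertex set $G$, reproduces the behaviour of the VSRW: the probability of jumping from $x$ to a neighbour $y$ should match $P(x,y)=\pi_{xy}/\pi_x$, and the mean time before leaving a vertex should match the mean holding time $1/\pi_x$ of $X$. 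Computing these jump probabilities and the moments of the jump times for Brownian motion on a metric graph --- the analytic heart advertised in the abstract --- is what fixes the free parameters (edge lengths, conductances, densities).

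The second step is to show that stochastic completeness of $X$ on $(\Gamma,\pi)$ is implied by non-explosiveness of $Y$ on $\Gamma_M$. Since $X$ is, up to a time change, the trace of $Y$ on $G$, and the two processes visit the same sequence of vertices, I would compare $\sum_n J^X_n$ with the total time $Y$ spends before escaping to infinity; because the mean holding times have been matched, divergence of the one forces divergence of the other, so that $Y$ non-explosive implies $X$ non-explosive, i.e. $(\Gamma,\pi)$ stochastically complete. This reduces the theorem to proving non-explosiveness of the diffusion $Y$.

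For the diffusion $Y$ I would invoke Sturm's strongly local analogue of Grigor'yan's criterion (\cite{St}): if the intrinsic metric $\rho_{\mathrm{int}}$ of the Dirichlet form of $Y$ is complete and the volume growth integral $\int^\infty r/\log V_{\mathrm{int}}(B_{\rho_{\mathrm{int}}}(x_0,r))\,dr$ diverges, then $Y$ is non-explosive. The remaining task is thus a comparison of geometries: I must show that $\rho_{\mathrm{int}}$ restricted to the vertex set is bi-Lipschitz equivalent to the adapted metric $\rho$, and that the intrinsic volume $V_{\mathrm{int}}$ of a ball is comparable to the counting measure $|B_\rho(x_0,r)|$ of the corresponding discrete ball. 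Here the second adaptedness hypothesis $\sum_{y\sim x}\pi_{xy}\rho^2(x,y)\le C_\rho$ is exactly what is needed: it bounds the edge measure attached to each vertex and thereby keeps the intrinsic volume of a ball comparable to the number of vertices it contains. Granting these comparisons, the hypothesis $\int^\infty r/\log|B_\rho(x_0,r)|\,dr=+\infty$ translates into Sturm's volume growth condition for $Y$, completing the argument.

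The main obstacle I anticipate is the geometric comparison in the last step. The intrinsic metric of a metric-graph Dirichlet form is not simply the path metric obtained by measuring each cable by its Euclidean length; it is determined by the edge densities through the defining inequality for $\rho_{\mathrm{int}}$, and it must be computed in tandem with the density and conductance choices made to match the VSRW. Ensuring simultaneously that (i) the process $Y$ mimics $X$, (ii) the intrinsic metric equals a bounded multiple of the natural path metric built from $\rho$, and (iii) the intrinsic volume stays comparable to counting measure is a delicate balancing of the free parameters, and it is precisely here that the two adaptedness conditions on $\rho$ are consumed.
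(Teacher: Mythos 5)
Your high-level architecture (metric graph, Sturm's criterion, transfer back to the VSRW) is the paper's, but two of your steps contain genuine gaps, and they are precisely where the paper has to work hardest. First, the synchronization: you ask for a cable system on which the Brownian motion matches the VSRW's jump law \emph{and} its mean holding time $1/\pi_x$ exactly. The paper shows (Theorem~\ref{sync1}) that on a loopless metric graph this exact matching is possible if and only if every edge of $\Gamma$ lies in a disjoint cycle cover --- so it already fails whenever $\Gamma$ has a vertex of degree one. Exact matching can always be arranged by attaching a loop at every vertex, but then one loses control of the intrinsic metric, which is the other constraint you need. The paper's actual move is to \emph{abandon exact matching}: it takes $\ell(e)=1$, $p(e)=\pi(e)$, $\omega(e)=\rho^2(\overline{e},\underline{e})$, plus a loop of fixed weight at each vertex, so that jump probabilities agree exactly while mean jump times agree only up to a two-sided bound $\widetilde{\mathbb{E}}^x\widetilde{T}\le\mathbb{E}^xT\le(C_\rho+1)\widetilde{\mathbb{E}}^x\widetilde{T}$ (Theorem~\ref{sync2}); the loop supplies the lower bound that would otherwise require \emph{strong} adaptedness, which does not exist on all graphs (Example 1 of Section~\ref{S3.4}). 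Relatedly, your demand that the intrinsic metric be bi-Lipschitz equivalent to $\rho$ on $G$ is both unattainable in general and unnecessary: the paper only proves and only needs the one-sided bound $\rho\le d_\rho=\widetilde{d}_I\big|_G$ (Lemma~\ref{rhodrho}), which makes intrinsic balls small and hence their $\mu$-volume at most $(C_\rho+1)\,|B_\rho(x_0,r)|$.

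Second, and more seriously, your transfer step --- ``because the mean holding times have been matched, divergence of the one forces divergence of the other'' --- is exactly the nontrivial claim, and it is false as stated. The lifetimes $\sum_n\tau_n$ (for $Y$) and $\sum_n\sigma_n$ (for $X$) are sums of \emph{different, dependent} random variables; agreement of conditional means does not transfer almost sure divergence from one series to the other, and there is no general three-series theorem for dependent variables (the paper cites Gilat on this point). The paper's route is: from Sturm, $\sum_n\tau_n=+\infty$ a.s.; then Brown's partial three-series theorem (Theorem~\ref{Brown}) together with the explicit variance formula of Theorem~\ref{BMjump2} upgrades this to $\sum_n\mathbb{E}(\tau_n\mid\mathcal{F}^Y_{n-1})=+\infty$ a.s. --- and it is in bounding $\mathrm{Var}(\tau_n\mid\cdot)$ by $\mathbb{E}(\tau_n\mid\cdot)$ and its square that the hypothesis $\rho(x,y)\le c_\rho$ is consumed, not in ``preventing the metric structure from degenerating'' as you suggest. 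The comparability of conditional means then yields $\sum_n\mathbb{E}(\sigma_n\mid\mathcal{F}^X_{n-1})=+\infty$, and one must still return to a pathwise statement: the paper truncates, assumes temporarily that $(\pi_x)_{x\in G}$ is bounded below so that $\mathbb{E}(\sigma_n{\bf 1}_{\{\sigma_n\le 1\}}\mid\cdot)\ge C'_\pi\,\mathbb{E}(\sigma_n\mid\cdot)$, and applies Doob's theorem (Theorem~\ref{Doob}) to the bounded variables $\sigma_n{\bf 1}_{\{\sigma_n\le 1\}}$ (Lemma~\ref{SClem2}); finally the lower-bound assumption on $\pi_x$ is removed by a separate coupling argument with an augmented graph carrying a pendant vertex at each light vertex. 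None of this machinery appears in your proposal, so the reduction ``$Y$ non-explosive $\Rightarrow$ $X$ non-explosive'' is an unproved --- and in your formulation unprovable --- step.
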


An immediate consequence of this criterion is that a weighted graph is stochastically complete if there exists $x_0\in G$ and $r_0\geq 0$ such that that $|B_\rho(x_0,r)|\leq Ce^{cr^2\log r}$ for $r\geq r_0$.  This is a substantial improvement over the best previous result relating volume growth and stochastic completeness of graphs (found in \cite{GHM} and \cite{Hu2}), which proved stochastic completeness under the assumption that there exists $x_0\in G$, $r_0\geq 0$, and $c\in (0,2)$ such that $|B_\rho(x_0,r)|\leq Ce^{cr\log r}$ for $r\geq r_0$. \\

The criterion in Theorem~\ref{mainresult} will be seen to yield sharp results for stochastic completeness of graphs in certain applications where necessary and sufficient conditions for stochastic completeness are known, such as birth-death chains.  As Theorem~\ref{mainresult} is also analogous to the result of Grigor'yan relating volume growth with stochastic completeness on manifolds, one expects that Theorem~\ref{mainresult} is essentially the best possible criterion relating volume growth and stochastic completeness of graphs. \\

The condition on the metric in the hypotheses of Theorem~\ref{mainresult} appeared previously in work of the author in \cite{F1}, and has also been used in papers of Frank, Lenz, and Wingert (\cite{FLW}) and Grigor'yan, Huang, and Masamune (\cite{GHM}).  A closely related metric is the intrinsic metric arising from the Dirichlet space construction of the VSRW, which was studied first by Davies in \cite{D}.  In light of the aforementioned result of Sturm for local Dirichlet spaces, one might expect that an analogue of Theorem~\ref{mainresult} holds using the intrinsic metric for the VSRW; we will show by a counterexample that this is not the case. \\

While this result deepens our understanding of the relationship between continuous time simple random walks on graphs and diffusion processes on manifolds, the overall picture is still far from clear.  This paper shows that the sharp volume growth criteria for stochastic completeness on graphs and on manifolds are the same.  The author has also shown in \cite{F2} that the optimal volume growth criteria for the bottom of the spectrum of the Laplacian on graphs and manifolds are also very similar.  However, the recent paper of Huang \cite{Hu2} shows that a certain uniqueness class for the heat equation (with close connections to stochastic completeness) is very different for graphs and manifolds.  As well, it has been known for some time that heat kernel behavior is very different for graphs and manifolds; while the heat kernel of a diffusion on a manifold admits a Gaussian upper bound (see \cite{G3}), the heat kernel of a continuous time simple random walk only satisfies a Gaussian upper bound in a certain space-time region (see \cite{F1}), and satisfies a different, `Poisson-type' heat kernel estimate elsewhere (see \cite{D} and \cite{F1}).  It would be useful to understand the connections between these disparate results in some sort of unified framework. \\ 

The structure of the paper is as follows.  In Section~\ref{S2}, we define Brownian motion on metric graphs, and compute jump probabilities and moments of hitting times for Brownian motions on weighted metric graphs.  The Dirichlet form associated with these processes is strongly local, and consequently the result of Sturm relating volume growth in the intrinsic metric to stochastic completeness is applicable in this setting.  In Section~\ref{S3}, we discuss various metrics for the VSRW and for Brownian motion on metric graphs, with an emphasis on intrinsic metrics and adapted metrics.  These metrics are compared under varying hypotheses, and an example is given showing that on graphs with unbounded vertex degrees, the intrinsic metric for the VSRW may have certain undesirable properties from the perspective of stochastic completeness.  In Section~\ref{S4}, given the VSRW on a weighted graph and an adapted metric, we use the adapted metric to define a weighted metric graph which has the same vertex set as the weighted graph.  We show that Brownian motion on this metric graph behaves similarly to the VSRW with respect to jump probabilities and expected jump times, and that the intrinsic metric on the metric graph is larger than the adapted metric on the graph. Finally, in Section~\ref{S5}, we prove Theorem~\ref{mainresult}; under the hypotheses of this theorem, our comparison results for metrics, combined with Sturm's result for stochastic completeness on local Dirichlet spaces, imply that the Brownian motion on the associated metric graph defined in Section~\ref{S4} is non-explosive.  By using the estimates on various moments of hitting times for Brownian motion on metric graphs from Section~\ref{S2}, we show that this implies that the VSRW is also non-explosive.  We then present examples demonstrating that Theorem~\ref{mainresult} yields sharp results when applied to graphs for which necessary and sufficient conditions for stochastic completeness are known. \\

\section{Brownian motion on metric graphs} \label{S2}

This section contains a construction of Brownian motion on metric graphs, and computations of hitting probabilities and moments of hitting times for this process.  These results will be used in Section~\ref{S4} to construct a Brownian motion on a metric graph which behaves similarly to a given VSRW; this will be useful in our subsequent study of stochastic completeness of graphs.

\subsection{Construction} \label{S2.1}

We begin with a graph $\Gamma=(G,E)$.  For each $e\in E$, we assign a positive, finite edge length $\ell(e)$, and give this graph a metric structure by identifying the edge $e$ with a copy of the closed interval $[0,\ell(e)]$; the endpoints of $e$ are identified with the points $0$ and $\ell(e)$.  When we wish to emphasize the metric structure of the edge $e$ we denote it by $I(e)$.  The resulting object, which we denote by $\mathcal{X}(\Gamma,(\ell(e))_{e\in E})$, is a metric graph.  We define a geodesic metric on $\mathcal{X}(\Gamma,(\ell(e))_{e\in E})$: given $x,y\in \mathcal{X}(\Gamma,(\ell(e))_{e\in E})$, $\widetilde{d}(x,y)$ is the length of a shortest path (using the Euclidean metric along each edge) joining $x$ and $y$.  For more details, including defining a topological structure on $\mathcal{X}(\Gamma,(\ell(e))_{e\in E})$, see \cite{Ha}. \\

In the metric graph setting, we will want to consider graphs which have loops.  Given a graph $\Gamma=(G,E)$ with no loops, we denote by $\Gamma_{\text{loop}} := (G,E_{\text{loop}})$ the augmented graph with the same vertex set and and an edge set which consists of all the edges in $E$, plus loop edges added at every vertex of $G$.  We denote the loop incident to a vertex $x$ by $x_{\text{loop}}$.  Given a vertex $x\in G$, we write $E(x)$ to denote all non-loop edges incident to $x$ (regardless of whether we are working on $\Gamma$ or $\Gamma_{\text{loop}}$) and $E_{\text{loop}}(x)$ to denote all edges (loop or non-loop) incident to $x$. \\

We equip $\mathcal{X}(\Gamma,(\ell(e))_{e\in E})$ with two sets of edge weights: positive edge densities $(\omega(e))_{e\in E}$ and positive jump conductances $(p(e))_{e\in E}$.  We call $\mathcal{X}(\Gamma,(\ell(e))_{e\in E},(\omega(e))_{e\in E}, (p(e))_{e\in E})$ a weighted metric graph. \\

For brevity, we henceforth write

\begin{align*}
\mathcal{X}(\Gamma,\ell) &:= \mathcal{X}(\Gamma,(\ell(e))_{e\in E}), \\
\mathcal{X}_{\text{loop}}(\Gamma,\ell) &:= \mathcal{X}(\Gamma_{\text{loop}},(\ell(e))_{e\in E_{\text{loop}}}), \\
\mathcal{X}(\Gamma,\ell,p,\omega) &:= \mathcal{X}(\Gamma,(\ell(e))_{e\in E},(\omega(e))_{e\in E}, (p(e))_{e\in E}), \\
\mathcal{X}_{\text{loop}}(\Gamma,\ell,p,\omega) &:= \mathcal{X}(\Gamma_{\text{loop}},(\ell(e))_{e\in E_{\text{loop}}},(\omega(e))_{e\in E_{\text{loop}}}, (p(e))_{e\in E_{\text{loop}}}).
\end{align*}
\\
We will construct Brownian motion on weighted metric graphs using Dirichlet space theory.  For simplicity, we work for now on $\mathcal{X}(\Gamma,\ell,p,\omega)$, although the case of loops is handled by simply changing all sums to be over $E_{\text{loop}}$ instead of $E$.  We consider the Hilbert space $L^2(\mathcal{X}(\Gamma,\ell,p,\omega),\mu)$, where

\begin{equation*}
\mu(dx) := \sum_{e\in E} {\bf 1}_{I(e)}(x)p(e)\omega(e)m(dx),
\end{equation*}
\\
and $m$ is Lebesgue measure on the edge $I(e)$. \\

Let $C(\mathcal{X}(\Gamma,\ell))$ denote the set of continuous functions on our weighted metric graph; continuity clearly does not depend on the choice of the edge weights $(\omega(e))_{e\in E}$ or $(p(e))_{e\in E}$.  Similarly, let $C_c(\mathcal{X}(\Gamma,\ell))$ denote the set of continuous functions with compact support, and let $C_0(\mathcal{X}(\Gamma,\ell))$ denote the closure of $C_c(\mathcal{X}(\Gamma,\ell))$ with respect to the $\|\cdot\|_\infty$ norm. \\

For $k\in\mathbb{Z}_+$ and $p\geq 1$, set

\begin{align*}
\mathcal{S}^{k,p}(\mathcal{X}(\Gamma,\ell,p,\omega),\mu) &:= \left\{u\in C(\mathcal{X}(\Gamma,\ell)):\text{for all $e\in E$}, u\big|_{I(e)}\in W^{k,p}(I(e),\mu\big|_{I(e)})\right\}, \\
W^{k,p}(\mathcal{X}(\Gamma,\ell,p,\omega),\mu) &:= \left\{u\in \mathcal{S}^{k,p}(\mathcal{X}(\Gamma,\ell,p,\omega),\mu):\sum_{e\in E} \|u\|^p_{W^{k,p}(I(e),\mu\big|_{I(e)})} < \infty\right\}, \\
W^{k,p}_0(\mathcal{X}(\Gamma,\ell,p,\omega),\mu) &:= C_0(\mathcal{X}(\Gamma,\ell)) \cap W^{k,p}(\mathcal{X}(\Gamma,\ell,p,\omega),\mu).
\end{align*}

We consider the Dirichlet form on $L^2(\mathcal{X}(\Gamma,\ell,p,\omega),\mu)$ with domain $\mathcal{D}(\mathcal{E}) = W^{1,2}_0(\mathcal{X}(\Gamma,\ell,p,\omega),\mu)$, which is defined on $\mathcal{D}(\mathcal{E})\times\mathcal{D}(\mathcal{E})$ by

\begin{equation*}
\mathcal{E}(f,g) := \sum_{e\in E}\int_{I(e)} f'(x)g'(x)p(e)m(dx).
\end{equation*}
\\
The closedness of the form $\mathcal{E}$ is implied by the completeness of the Sobolev spaces $W^{k,p}(I)$, where $I$ is a finite interval. \\

The process associated with the Dirichlet form on this space is Brownian motion on $\mathcal{X}(\Gamma,\ell,p,\omega)$, which we denote by $(Y_t)_{t\geq 0}$.  From this construction, $(Y_t)_{t\geq 0}$ is a Hunt process, and consequently  $(Y_t)_{t\geq 0}$ is strong Markov. \\

By the Gauss-Green formula, one may recover conditions for the generator of $(Y_t)_{t\geq 0}$, $\mathcal{L}_Y$; the domain of $\mathcal{L}_Y$ is the set of functions $u$ such that $u\in W^{2,2}_0(\mathcal{X}(\Gamma,\ell,p,\omega),\mu)$ and, for each $x\in G$,

\begin{equation} \label{compcond}
\sum_{e\in E(x)} p(e)u'_e(x) = 0,
\end{equation}
\\
where $u'_e(x)$ denotes the directional derivative of $u$ at $x$ in the direction of the edge $e$.  On each edge $I(e)$, the generator $\mathcal{L}_Y$ is given by 

\begin{equation*}
\mathcal{L}_Y\Big|_{I(e)} u\Big|_{I(e)} = \frac{1}{2\omega(e)}u\Big|_{I(e)}''.
\end{equation*}
\\
One may view the Brownian motion on $\mathcal{X}(\Gamma,\ell,p,\omega)$ as being obtained from a single standard Brownian motion on $\mathbb{R}$, $(W_t)_{t\geq 0}$, in the following way.  We start the Brownian motion at some vertex $x\in G$.  By a standard result for Brownian motion on $\mathbb{R}$, the excursion set $\{W_t\not=0\}$ consists of countably many intervals.  For each excursion, we pick an adjacent edge $f$ with probability $p(f)(\sum_{e\in E(x)} p(e))^{-1}$, and move like $|W_{\omega(f)^{-1}t}|$ along this edge.  This is done until we reach a new vertex, at which point we use the strong Markov property and continue. \\

We discuss briefly on the changes necessary to consider Brownian motion on $\mathcal{X}_{\text{loop}}(\Gamma,\ell,p,\omega)$ instead of $\mathcal{X}(\Gamma,\ell,p,\omega)$.  The definition via Dirichlet spaces goes through unchanged if one replaces $E$ with $E_{\text{loop}}$.  For the generator, the compatiblity condition \eqref{compcond} becomes

\begin{equation} \label{compcondloop}
\sum_{e\in E(x)} p(e)u'_e(x) + p(x_{\text{loop}})u'_1(x_\text{loop})+p(x_{\text{loop}})u'_2(x_\text{loop})= 0.
\end{equation}
\\
where $u'_1(x_\text{loop}), u'_2(x_\text{loop})$ are the directional derivatives of $u$ at $x$ in the direction of either of the two possible orientations of the loop edge $x_{\text{loop}}$.  Similarly, if one views the Brownian motion on $\mathcal{X}_{\text{loop}}(\Gamma,\ell,p,\omega)$ as being obtained from a single standard Brownian motion $(W_t)_{t\geq 0}$, then for each interval, we pick a non-loop edge $f$ with probability $p(f)(\sum_{e\in E(x)} p(e)+2p(x_{\text{loop}}))^{-1}$, and each of the two possible orientations of the loop edge is chosen with probability $p(x_{\text{loop}})(\sum_{e\in E(x)} p(e)+2p(x_{\text{loop}}))^{-1}$. \\

The study of Brownian motion on metric graphs began with the paper of Walsh \cite{Wa}, and deals with the particular case where $x$ is a vertex connected to some (possibly infinite) number of finite or infinite rays emanating from $x$.  Various constructions of this process have been given; by Rogers \cite{R} (using resolvents), by Baxter and Chacon \cite{BC} (using the generator), by Varopoulos \cite{V} (using Dirichlet form theory), and by Salisbury \cite{Sa} (using excursion theory).  The paper of Varopoulos is of particular interest, as it discusses the present setting of Brownian motion on metric graphs (with finite edge lengths), and relates this to the study of continuous-time Markov chains.  Recently, there have been several papers by Kostrykin, Potthoff, and Schrader  (see in particular \cite{K+1}, \cite{K+2}) discussing Brownian motions on metric graphs; they consider a more general boundary condition at vertices than the one imposed here, and they allow edges of infinite length. \\

Brownian motion on metric graphs was initially studied as an object in the theory of diffusions, and since has been examined as an analytic object in its own right.  It has been used on previous occasions as a tool in the study of random walks; Varopoulos' paper \cite{V} establishes estimates for discrete time simple random walks using Brownian motion on metric graphs, and the paper of Barlow and Bass (\cite{B+1}) uses Brownian motion on metric graphs to study a continuous-time simple random walk.  From an analytic perspective, Brownian motion on metric graphs has many desirable properties, due to the panoply of existing results on one-dimensional Brownian motion, as well as the applicability of general results on diffusions or local Dirichlet spaces. \\

\subsection{Hitting probabilities and hitting times} \label{S2.2}

This section collects several results on the hitting probabilities and hitting times of Brownian motion on metric graphs.  We will work exclusively with the augmented weighted metric graph $\mathcal{X}_{\text{loop}}(\Gamma,\ell,p,\omega)$.  We set

\begin{equation*}
q(e) := \begin{cases} p(e) &\text{if $e$ is not a loop edge}, \\ 2p(e) &\text{if $e$ is a loop edge}. \end{cases}
\end{equation*}
\\
Using this definition, \eqref{compcondloop} becomes

\begin{equation} \label{compcondloop2}
\sum_{e\in E(x)} q(e)u'_e(x) + \frac{1}{2}q(x_{\text{loop}})u'_1(x_\text{loop})+\frac{1}{2}q(x_{\text{loop}})u'_2(x_\text{loop})= 0.
\end{equation}
\\
Our results for hitting probabilities and hitting times will be stated strictly in terms of $q$ because Theorem~\ref{BMjump1} and Theorem~\ref{BMjump2} are simpler to express in terms of the $q$ conductances.  \\

Denote Brownian motion on $\mathcal{X}_{\text{loop}}(\Gamma,\ell,p,\omega)$ by $(Y_t)_{t\geq 0}$.  As we will not be using the VSRW in this section, there is no ambiguity in writing $\mathcal{L} := \mathcal{L}_Y$. \\

Let the vertex $x$ be adjacent to the vertices $x_1,\ldots,x_k$ via the edges $e_1,\ldots e_k$, and to itself through the loop $x_{\text{loop}}$.  We use $B$ to denote the part of the graph containing $x,x_1\ldots,x_k$, as well as the metric edges $I(e_1),\ldots,I(e_k),I(x_{\text{loop}})$.  We also introduce coordinates on $B$, rooted at $x$; given $1\leq j\leq k$ and $0\leq z\leq \ell(e_j)$, we write $(j,z)$ for the unique point on $I(e_j)$ which is at a distance of $z$ from $x$.  On the loop, we choose an arbitrary orientation and write $(\text{loop},z)$ for the unique point on $I(x_{\text{loop}})$ which is at a positive distance of $z$ from $x$ (according to the orientation of the loop). \\

We write $T=T_Y:=\inf\{t\geq 0:Y_t\not\in B\}$ for the first time that $(Y_t)_{t\geq 0}$, started at $y$, hits any of $x_1,\ldots,x_k$.  Given $y\in B$, we write $\mathbb{P}^{y}$ for the probability law of $(Y_t)_{t\geq 0}$ started at $y$, and $\mathbb{E}^y$ for expectations with respect to this probability law. \\

We remark also that all results in this section are valid for loopless metric graphs; one needs only to replace all sums over $E_{\text{loop}}(x)$ with sums over $E(x)$.

\begin{thm} \label{BMprob}
For $1\leq j\leq k$,

\begin{equation*}
\mathbb{P}^{x}(Y_T = x_j) = \frac{\frac{q(e_j)}{\ell(e_j)}}{\sum_{e\in E(x)} \frac{q(e)}{\ell(e)}}.
\end{equation*}
\end{thm}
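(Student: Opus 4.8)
The plan is to compute the exit distribution $\mathbb{P}^x(Y_T = x_j)$ by solving a boundary value problem for the generator $\mathcal{L}_Y$. The key observation is that for a diffusion, hitting probabilities are given by harmonic functions: if I let $h_j(y) := \mathbb{P}^y(Y_T = x_j)$, then $h_j$ is $\mathcal{L}_Y$-harmonic on the interior of $B$ (i.e. $\mathcal{L}_Y h_j = 0$ away from the vertices), with boundary data $h_j(x_i) = \delta_{ij}$ for $1 \le i \le k$. Since the generator acts on each edge $I(e)$ as a multiple of the second derivative ($\mathcal{L}_Y|_{I(e)} = \frac{1}{2\omega(e)} \frac{d^2}{dz^2}$), harmonicity forces $h_j$ to be \emph{affine} (linear in the arc-length coordinate $z$) along each edge. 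This reduces the whole problem to determining the single unknown value $h_j(x)$ at the central vertex.

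\emph{First} I would write down the affine form of $h_j$ on each edge. On edge $e_i$, using the coordinate $(i,z)$ with $h_j$ equal to $h_j(x)$ at $z=0$ and equal to $\delta_{ij}$ at $z = \ell(e_i)$, linearity gives
\begin{equation*}
h_j(i,z) = h_j(x) + \frac{\delta_{ij} - h_j(x)}{\ell(e_i)}\, z,
\end{equation*}
so the outward directional derivative at $x$ along $e_i$ is $(h_j)'_{e_i}(x) = (\delta_{ij} - h_j(x))/\ell(e_i)$. On the loop $x_{\text{loop}}$ the function returns to the value $h_j(x)$ at both ends, so the affine (in fact constant) solution forces both loop directional derivatives to vanish, meaning the loop contributes nothing to the flux balance. \emph{Next} I would impose the compatibility (Kirchhoff-type) condition \eqref{compcondloop2} at the vertex $x$, which in terms of the $q$-conductances reads $\sum_{e \in E(x)} q(e)\, (h_j)'_e(x) + \frac{1}{2}q(x_{\text{loop}})((h_j)'_1 + (h_j)'_2) = 0$; since the loop terms drop out, this becomes
\begin{equation*}
\sum_{i=1}^k q(e_i)\, \frac{\delta_{ij} - h_j(x)}{\ell(e_i)} = 0.
\end{equation*}

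Solving this single linear equation for $h_j(x)$ yields exactly
\begin{equation*}
h_j(x) = \frac{q(e_j)/\ell(e_j)}{\sum_{e \in E(x)} q(e)/\ell(e)},
\end{equation*}
which is the claimed formula, since $\mathbb{P}^x(Y_T = x_j) = h_j(x)$. The choice to phrase the boundary condition via $q$ rather than $p$ is precisely what makes the loop terms factor out cleanly and the answer take this symmetric form.

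\textbf{The main obstacle} I anticipate is not the computation—which is the elementary linear-algebra exercise sketched above—but the justification that $h_j(y) = \mathbb{P}^y(Y_T = x_j)$ is genuinely the unique bounded $\mathcal{L}_Y$-harmonic function satisfying the stated boundary and compatibility conditions, and that the probabilistic hitting probability really coincides with this analytic solution. This requires confirming that $T < \infty$ almost surely (so that the exit distribution is a genuine probability distribution summing to one over $j$), invoking the strong Markov property and a Dynkin-type / optional-stopping argument to identify $\mathbb{E}^y[h_j(Y_T)]$ with $h_j(y)$, and checking that $h_j$ indeed lies in the domain of $\mathcal{L}_Y$ so that \eqref{compcondloop2} applies. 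The finiteness of the edge lengths $\ell(e_i)$ and the fact that $B$ involves only finitely many edges make $T$ integrable (as will be quantified in the companion hitting-time results), so these regularity points are routine to discharge; the conceptual content is entirely in the harmonic-function reduction.
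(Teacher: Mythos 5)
Your proposal is correct and follows essentially the same route as the paper's proof: both reduce the hitting probability to the Dirichlet problem $\mathcal{L}u=0$ on $B$ with boundary data $\delta_{ij}$, observe that harmonicity forces $u$ to be affine on each edge (hence constant on the loop, so the loop drops out of the flux balance), and solve the single linear equation produced by the Kirchhoff condition \eqref{compcondloop2} at $x$. The paper likewise dispatches the identification of the hitting probability with the solution of the boundary value problem by an appeal to ``general Markov process theory,'' so the regularity caveats you flag are treated at the same level of detail there.
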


\begin{proof}
Let $u(y) := \mathbb{P}^y(Y_T=x_j)$.  By general Markov process theory, $u$ satisfies

\begin{equation*}
\begin{cases} \mathcal{L}u = 0 &\text{in }B, \\ u = 1 &\text{at } x_j, \\ u=0 &\text{on }\partial B\setminus\{x_j\}.\end{cases}
\end{equation*}
\\
By solving the differential equation along each edge, we obtain constants $(a_i)_{1\leq i\leq k}$ and $(b_i)_{1\leq i\leq k}$ such that $u((i,y)) = a_iy+b_i$.  By letting $y\downarrow 0$, we get that $b_i=b$ for $1\leq i\leq k$.  On the loop, $u$ is linear and satisfies $u((\text{loop},0))=u((\text{loop},\ell(x_{\text{loop}})))=b$, so $u$ is identically equal to $b$ along the loop. \\

Plugging in the boundary condition shows that, for $1\leq i\leq k$,

\begin{equation} \label{eq1}
\begin{cases} a_i\ell(e_i)+b = 1 &\text{if }i=j, \\ a_i\ell(e_i)+b = 0&\text{if }i\not=j.\end{cases}
\end{equation}
\\
By \eqref{compcondloop2}, since the terms corresponding to the loop vanish, we have

\begin{equation*}
\sum^k_{i=1} q(e_i)a_i =0.
\end{equation*}
\\
By multiplying the $i-$th equation in $\eqref{eq1}$ by $\frac{q(e_i)}{\ell(e_i)}$ and summing over $1\leq i\leq k$, we obtain that

\begin{equation*}
\sum^k_{i=1} q(e_i)a_i+b\sum^k_{i=1} \frac{q(e_i)}{\ell(e_i)} = \frac{q(e_j)}{\ell(e_j)},
\end{equation*}
\\
and hence

\begin{equation*}
\mathbb{P}^{x}(Y_T = x_j) = b = \frac{\frac{q(e_j)}{\ell(e_j)}}{\sum_{e\in E(x)} \frac{q(e)}{\ell(e)}}.
\end{equation*}
\\
With this in hand, we may also compute from \eqref{eq1} that for $1\leq i\leq k$,

\begin{equation*}
\mathbb{P}^{(i,y)}(Y_T=x_j) = u((i,y)) = \begin{cases} \displaystyle \frac{y}{\ell(e_i)}+\frac{\ell(e_i)-y}{\ell(e_i)}\frac{\frac{q(e_j)}{\ell(e_j)}}{\sum_{e\in E(x)} \frac{q(e)}{\ell(e)}} &\text{if }i=j, \\ \displaystyle \frac{\ell(e_i)-y}{\ell(e_i)}\frac{\frac{q(e_j)}{\ell(e_j)}}{\sum_{e\in E(x)}\frac{q(e)}{\ell(e)}} &\text{if }i\not=j, \\ \displaystyle \frac{\frac{q(e_j)}{\ell(e_j)}}{\sum_{e\in E(x)}\frac{q(e)}{\ell(e)}} &\text{if }i=\text{loop}. \end{cases}
\end{equation*}
\end{proof}

Note that in the above the edge densities $(\omega(e))_{e\in E}$ do not appear.  This is to be expected, given that changing the edge densities only causes a time-change of the Brownian motion $(Y_t)_{t\geq 0}$, which cannot change the probabilities of exiting $B$ through the vertices $x_1,\ldots,x_k$.  Similarly, none of the quantities related to the loop edge $x_{\text{loop}}$ appear, since traversing the loop does not change the probability of exiting $B$ at a specified vertex.

\begin{thm}\label{BMjump1}
The first moment of the hitting time $T$ satisfies

\begin{equation*}
\mathbb{E}^{x}T = \frac{\sum_{e\in E_{\text{loop}}(x)} \omega(e)q(e)\ell(e)}{\sum_{e\in E(x)} \frac{q(e)}{\ell(e)}}.
\end{equation*}
\end{thm}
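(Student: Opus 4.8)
The plan is to compute $\mathbb{E}^y T$ as a function on $B$ by solving the appropriate boundary value problem, exactly as in the proof of Theorem~\ref{BMprob}, and then evaluate at the root $y=x$. Let $u(y):=\mathbb{E}^y T$. By general Markov process theory, $u$ satisfies the inhomogeneous problem $\mathcal{L}u=-1$ in $B$ with $u=0$ on $\partial B=\{x_1,\dots,x_k\}$. Since $\mathcal{L}|_{I(e)}=\frac{1}{2\omega(e)}\frac{d^2}{dz^2}$ along each edge, the equation on edge $e_i$ reads $u''=-2\omega(e_i)$, whose general solution is the quadratic $u((i,z))=-\omega(e_i)z^2+a_iz+b_i$; on the loop it is $u((\mathrm{loop},z))=-\omega(x_{\mathrm{loop}})z^2+a_{\mathrm{loop}}z+b_{\mathrm{loop}}$.

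Next I would impose the matching and boundary conditions to pin down the constants. Continuity at the root forces $b_i=b$ for all $i$ and $b_{\mathrm{loop}}=b$ (the value of $u$ at $x$). The boundary condition $u((i,\ell(e_i)))=0$ gives $-\omega(e_i)\ell(e_i)^2+a_i\ell(e_i)+b=0$, so $a_i=\omega(e_i)\ell(e_i)-\frac{b}{\ell(e_i)}$ for $1\le i\le k$. For the loop edge both endpoints are identified with $x$, so the condition $u((\mathrm{loop},\ell(x_{\mathrm{loop}})))=b$ yields $a_{\mathrm{loop}}=\omega(x_{\mathrm{loop}})\ell(x_{\mathrm{loop}})$. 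The one remaining equation is the Kirchhoff-type compatibility condition \eqref{compcondloop2} at $x$. The directional derivatives at $x$ (i.e.\ at $z=0$) are $u'_{e_i}(x)=a_i$ for the non-loop edges, while for the two orientations of the loop one has $u'_1=a_{\mathrm{loop}}$ and $u'_2=a_{\mathrm{loop}}$ as well (both measured outward from $x$, since the loop is symmetric in the chosen coordinate), giving a combined loop contribution $q(x_{\mathrm{loop}})a_{\mathrm{loop}}$.

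Substituting into \eqref{compcondloop2} produces the single scalar equation
\begin{equation*}
\sum_{i=1}^k q(e_i)\Bigl(\omega(e_i)\ell(e_i)-\frac{b}{\ell(e_i)}\Bigr)+q(x_{\mathrm{loop}})\,\omega(x_{\mathrm{loop}})\ell(x_{\mathrm{loop}})=0.
\end{equation*}
Solving for $b$ separates the density-weighted length sum over all edges (including the loop) from the conductance-over-length sum over non-loop edges:
\begin{equation*}
b\sum_{e\in E(x)}\frac{q(e)}{\ell(e)}=\sum_{e\in E_{\mathrm{loop}}(x)}\omega(e)q(e)\ell(e),
\end{equation*}
which rearranges to exactly the claimed formula for $\mathbb{E}^x T=u(x)=b$.

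The routine computations are the differential equation solving and the linear algebra; the step requiring genuine care is the treatment of the loop term in the compatibility condition \eqref{compcondloop2}. I must verify that the loop's quadratic profile contributes $q(x_{\mathrm{loop}})\omega(x_{\mathrm{loop}})\ell(x_{\mathrm{loop}})$ to the numerator while contributing \emph{nothing} to the $\sum q(e)/\ell(e)$ denominator—this is why the numerator sum runs over $E_{\mathrm{loop}}(x)$ but the denominator sum runs only over $E(x)$, mirroring the asymmetry already seen in Theorem~\ref{BMprob}. The subtlety is that both directional derivatives of $u$ along the loop at $x$ point outward and the $\frac{1}{2}q(x_{\mathrm{loop}})$ factors in \eqref{compcondloop2} combine to a single $q(x_{\mathrm{loop}})$ times the outward slope $a_{\mathrm{loop}}=\omega(x_{\mathrm{loop}})\ell(x_{\mathrm{loop}})$; confirming this sign and factor bookkeeping is the main obstacle, after which the formula drops out immediately.
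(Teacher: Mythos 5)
Your proposal is correct and follows essentially the same route as the paper: solve $\mathcal{L}v=-1$ with Dirichlet boundary conditions edge by edge, obtain the quadratic profiles, and combine continuity at $x$ with the Kirchhoff condition \eqref{compcondloop2}; your explicit check that both outward loop derivatives equal $a_{\mathrm{loop}}=\omega(x_{\mathrm{loop}})\ell(x_{\mathrm{loop}})$ (so the two $\tfrac{1}{2}q(x_{\mathrm{loop}})$ terms combine to $q(x_{\mathrm{loop}})\omega(x_{\mathrm{loop}})\ell(x_{\mathrm{loop}})$) is exactly what the paper uses implicitly. The only cosmetic difference is that you solve for the $a_i$ and substitute, while the paper multiplies each boundary equation by $q(e_i)/\ell(e_i)$ and sums — the same linear algebra in a different order.
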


\begin{proof}
Let $v(y) := \mathbb{E}^yT$.  By general Markov process theory, $v$ satisfies

\begin{equation*}
\begin{cases} \mathcal{L}v = -1&\text{in }B, \\ v = 0 &\text{on }\partial B. \end{cases}
\end{equation*}
\\
Thus, we obtain that $v((i,y)) = -\omega(e_i)y^2+a_iy+b_i$ for some constants $(a_i)_{1\leq i\leq k}$ and $(b_i)_{1\leq i\leq k}$.  Letting $y\downarrow 0$, we get that $b_i = b$ for $1\leq i\leq k$.  Since $v((i,\ell(e_i)) = 0$ for $1\leq i\leq k$, we have that

\begin{equation} \label{eq2}
-\omega(e_i)\ell^2(e_i)+a_i\ell(e_i)+b = 0.
\end{equation}
\\
On the loop edge, since $v((\text{loop},0))=v((\text{loop},\ell(x_{\text{loop}})))=b$, we have that 

\begin{equation*}
v((\text{loop},y))=\omega(x_{\text{loop}})y(\ell(x_{\text{loop}})-y)+b.
\end{equation*}
\\
The compatibility condition \eqref{compcondloop2} gives

\begin{equation*}
\sum^k_{i=1} q(e_i)a_i + q(x_{\text{loop}})\omega(x_{\text{loop}})\ell(x_{\text{loop}}) = 0.
\end{equation*}
\\
As before, we multiply the $i-$th equation in $\eqref{eq2}$ by $\frac{q(e_i)}{\ell(e_i)}$, and sum over $1\leq i\leq k$ to obtain

\begin{equation*}
\mathbb{E}^{x}T = b = \frac{\sum_{e\in E_{\text{loop}}(x)} \omega(e)q(e)\ell(e)}{\sum_{e\in E(x)} \frac{q(e)}{\ell(e)}}.
\end{equation*}
\\
Once we have computed $b$, we may compute from \eqref{eq2} that for $1\leq i\leq k$,

\begin{equation*}
v((i,y)) = -\omega(e_i)y^2+\left(\omega(e_i)\ell(e_i)-\frac{1}{\ell(e_i)}\frac{\sum_{e\in E_{\text{loop}}(x)} \omega(e)q(e)\ell(e)}{\sum_{e\in E(x)} \frac{q(e)}{\ell(e)}}\right)y+\frac{\sum_{e\in E_{\text{loop}}(x)} \omega(e)q(e)\ell(e)}{\sum_{e\in E(x)} \frac{q(e)}{\ell(e)}}.
\end{equation*}
\\
\end{proof}

\begin{thm} \label{BMjump2}
The second moment of the hitting time $T$ and the variance of $T$ are given by

\begin{align*}
\mathbb{E}^{x}T^2  = {}& \frac{1}{3}\frac{\sum_{e\in E_{\text{loop}}(x)} \omega^2(e)q(e)\ell^3(e)}{\sum_{e\in E(x)} \frac{q(e)}{\ell(e)}}+\frac{4}{3}\left(\frac{\sum_{e\in E_{\text{loop}}(x)}  \omega(e)q(e)\ell(e)}{\sum_{e\in E(x)} \frac{q(e)}{\ell(e)}}\right)^2 \\
& +2\omega(x_{\text{loop}})q(x_{\text{loop}})\ell(x_{\text{loop}})\frac{\sum_{e\in E_{\text{loop}}(x)}  \omega(e)q(e)\ell(e)}{\left(\sum_{e\in E(x)} \frac{q(e)}{\ell(e)}\right)^2}, \\
\textup{Var } T = {}& \frac{1}{3}\frac{\sum_{e\in E_{\text{loop}}(x)} \omega^2(e)q(e)\ell^3(e)}{\sum_{e\in E(x)} \frac{q(e)}{\ell(e)}}+\frac{1}{3}\left(\frac{\sum_{e\in E_{\text{loop}}(x)}  \omega(e)q(e)\ell(e)}{\sum_{e\in E(x)} \frac{q(e)}{\ell(e)}}\right)^2 \\
& + 2\omega(x_{\text{loop}})q(x_{\text{loop}})\ell(x_{\text{loop}})\frac{\sum_{e\in E_{\text{loop}}(x)}  \omega(e)q(e)\ell(e)}{\left(\sum_{e\in E(x)} \frac{q(e)}{\ell(e)}\right)^2}.
\end{align*}
\end{thm}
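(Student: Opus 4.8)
The plan is to follow exactly the same template as in the proofs of Theorem~\ref{BMprob} and Theorem~\ref{BMjump1}: set $w(y) := \mathbb{E}^y T^2$, derive the governing ODE with boundary conditions, solve on each edge, impose the compatibility condition \eqref{compcondloop2} at the root $x$, and read off $w(x)$. For the variance, I will then combine $\mathbb{E}^x T^2$ with the value of $\mathbb{E}^x T$ already computed in Theorem~\ref{BMjump1} via $\textup{Var}\,T = \mathbb{E}^x T^2 - (\mathbb{E}^x T)^2$. The essential new ingredient is the ODE for the second moment: by standard Markov process theory (Dynkin's formula applied twice, or the known characterization of hitting-time moments for diffusions), the function $w(y) = \mathbb{E}^y T^2$ satisfies
\begin{equation*}
\begin{cases} \mathcal{L}w = -2v &\text{in }B, \\ w = 0 &\text{on }\partial B, \end{cases}
\end{equation*}
where $v(y) = \mathbb{E}^y T$ is the explicit first-moment function already obtained in the proof of Theorem~\ref{BMjump1}.

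First I would record the already-known expression for $v$ on each edge from Theorem~\ref{BMjump1}, namely the quadratic $v((i,y)) = -\omega(e_i)y^2 + a_i y + b$ with $b = \mathbb{E}^x T$ and $a_i$ its computed value, plus the loop formula $v((\text{loop},y)) = \omega(x_{\text{loop}})y(\ell(x_{\text{loop}})-y)+b$. On edge $I(e_i)$ the equation becomes $\tfrac{1}{2\omega(e_i)} w'' = -2v$, a second-order ODE whose right-hand side is the explicit quadratic in $y$ above; integrating twice yields $w((i,y))$ as a quartic polynomial in $y$ with two integration constants $c_i, d_i$ per edge. Continuity at the root forces $d_i = d$ (independent of $i$), and the exterior boundary condition $w((i,\ell(e_i))) = 0$ pins down each $c_i$ in terms of $d$. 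The loop is handled identically, producing $w((\text{loop},y))$ with the correct boundary values $w = d$ at both ends of the loop.

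The decisive step, as in the earlier proofs, is imposing the compatibility condition \eqref{compcondloop2} at $x$: I would compute the directional derivatives $w'_{e_i}(x)$ and the two loop derivatives $w'_1(x_{\text{loop}}), w'_2(x_{\text{loop}})$ from the explicit polynomial solutions, substitute into \eqref{compcondloop2}, and solve the resulting single linear equation for the common constant $d = w(x) = \mathbb{E}^x T^2$. Multiplying the edge boundary relations by the weights $\tfrac{q(e_i)}{\ell(e_i)}$ and summing — exactly the algebraic device used twice already — should collapse the derivative sum and isolate $d$ as a ratio with denominator $\sum_{e\in E(x)} \tfrac{q(e)}{\ell(e)}$.

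I expect the main obstacle to be purely computational bookkeeping rather than conceptual: the right-hand side $-2v$ is itself a nontrivial quadratic whose coefficients already involve the ratio $b = \mathbb{E}^x T$, so after two integrations and the weighted summation the expression for $d$ will contain several competing terms that must be organized carefully into the three pieces appearing in the statement — the $\tfrac{1}{3}$ term with $\omega^2(e)q(e)\ell^3(e)$ (arising from integrating the $-\omega(e)y^2$ part of $v$), the $\tfrac{4}{3}$ square term (arising from the cross terms with $b$), and the loop correction term carrying the explicit factor $2\omega(x_{\text{loop}})q(x_{\text{loop}})\ell(x_{\text{loop}})$ (the only place where the loop's contribution to the second moment survives, since the loop does not affect exit probabilities but does add to elapsed time). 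The final subtraction $\mathbb{E}^x T^2 - (\mathbb{E}^x T)^2$ then converts the $\tfrac{4}{3}$ coefficient into $\tfrac{4}{3} - 1 = \tfrac{1}{3}$, which is precisely the coefficient appearing in the variance formula, providing a useful internal consistency check on the computation.
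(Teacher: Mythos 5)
Your proposal is correct and follows essentially the same route as the paper: the paper also sets $w(y)=\mathbb{E}^y T^2$, establishes $\mathcal{L}w=-2v$ with $w=0$ on $\partial B$ (via an explicit strong Markov expansion over an $\varepsilon$-interval rather than your citation of Dynkin's formula, a cosmetic difference), and then solves edge by edge and applies the compatibility condition \eqref{compcondloop2} with the same weighted-summation device used in Theorems~\ref{BMprob} and \ref{BMjump1}. Your observation that the subtraction $\mathbb{E}^xT^2-(\mathbb{E}^xT)^2$ turns the coefficient $\tfrac{4}{3}$ into $\tfrac{1}{3}$ is exactly the consistency check implicit in the paper's statement.
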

\begin{proof}
Set $w(y) = \mathbb{E}^yT^2$.  Given $\varepsilon\ll 1$ and a point $(i,y)$ which is not one of the vertices $x,x_1,\ldots,x_k$, we use the strong Markov property to obtain

\begin{align*}
w((i,y)) &:= \mathbb{E}^{(i,y)}T^2 = \mathbb{E}^{(i,y)}(\mathbb{E}T^2|\mathcal{F}_\sigma) = \mathbb{E}^{(i,y)}(\mathbb{E}^{Y_\sigma}(T+\sigma)^2) \\
&= \mathbb{E}^{(i,y)}(\mathbb{E}^{Y_\sigma}(T^2+2\sigma T+\sigma^2)) \\
&= \mathbb{E}^{(i,y)}(\mathbb{E}^{Y_\sigma} T^2) + 2\mathbb{E}^{(i,y)}(\sigma \mathbb{E}^{Y_\sigma}T)+\mathbb{E}^{(i,y)}(\mathbb{E}^{Y_\sigma} \sigma^2) \\
&= \frac{1}{2}(w((i,y+\varepsilon))+w((i,y-\varepsilon)))+2\varepsilon^2\omega(e_i)\frac{1}{2}(v((i,y+\varepsilon))+v((i,y-\varepsilon)))+\frac{5}{3}\omega^2(e_i)\varepsilon^4.
\end{align*}

Rearranging as before and letting $\varepsilon\downarrow 0$, we get that $w''((i,y)) = -4\omega(e_i)v((i,y))$, or $\mathcal{L}w = -2v$, so that the function $w$ satisfies

\begin{equation*}
\begin{cases} \mathcal{L}w = -2v&\text{in }B, \\ w = 0 &\text{on }\partial B. \end{cases}
\end{equation*}
\\
The explicit form for $v$ was worked out in the proof of Theorem~\ref{BMjump1}, and hence, using the same techniques as before, one obtains the desired formulae.
\end{proof}

\section{Metrics for the VSRW and for Brownian motion on metric graphs} \label{S3}

In this section, we define and compare several metrics for graphs and metric graphs.  We begin with several conventions.  In this section, we begin with a weighted graph $(\Gamma, (\pi(e))_{e\in E})$ and the associated VSRW $(X_t)_{t\geq 0}$.  We will also analyze the weighted metric graph $\mathcal{X}(\Gamma,\ell,p,\omega)$, which has the associated Brownian motion $(Y_t)_{t\geq 0}$.  The case of weighted metric graphs with loops, which requires virtually no changes from the regular metric graph setting, is discussed at the end of this chapter. \\

Given an edge $e\in E$, we use $\overline{e}$ and $\underline{e}$ to denote the initial and terminal vertices, respectively.  In many situations, the orientation one chooses for edges may not matter (and one may choose an arbitrary orientation); in other situations, such as consecutive edges in a path, there will be a natural orientation. \\

We have already defined geodesic metrics on both $(\Gamma,\pi)$ and $\mathcal{X}(\Gamma,\ell,p,\omega)$.  In our following work, we will need to consider metrics which arise from consideration of the stochastic processes on these spaces.

\subsection{Intrinsic metrics} \label {S3.1}

Both the VSRW on weighted graphs and Brownian motion on metric graphs may be defined through the theory of Dirichlet spaces; such processes have an intrinsic notion of distance associated with them. \\

We work first with the case of Brownian motion on $\mathcal{X}(\Gamma,\ell,p,\omega)$, and view it as the process associated with the strongly local Dirichlet form $(\mathcal{E},\mathcal{D}(\mathcal{E}))$ on $L^2(\mathcal{X}(\Gamma,\ell,p,\omega),\mu)$.  According to Dirichlet form theory (see \cite{St}), there exists a measure $\Gamma(f,g)$ satisfying

\begin{equation*}
\mathcal{E}(f,g) = \int_{\mathcal{X}(\Gamma,\ell)} d\Gamma(f,g). \\
\end{equation*}
\\
Set $L_Y :=\{f\in W^{1,2}_{\text{loc}}(\mathcal{X}(\Gamma,\ell,p,\omega),\mu):d\Gamma(f,f)\leq d\mu\}$.  The intrinsic metric for $Y$ is defined by $\widetilde{d}_I(x,y) := \sup\{f(x)-f(y):f\in L_Y\}$.  For $x\in I(e)$, we have 

\begin{align*}
d\Gamma(f,f)(x) &= (f'(x))^2p(e)dm(x), \\ 
d\mu(x) &= p(e)\omega(e)dm(x). 
\end{align*}
\\
Suppose that $f\in L_Y$.  Consider the edge $e$; if $x\in I(e)$, then $|f'(x)|^2 \leq \omega(e)$, so that $(f(\overline{e})-f(\underline{e}))^2 \leq \omega(e)\ell^2(e)$.  Consequently, if we restrict to points $x,y\in G$, the intrinsic metric satisfies 

\begin{equation*}
\widetilde{d}_I(x,y) := \sup\{|f(x)-f(y)|:f\in\mathcal{E}_Y\},
\end{equation*}
\\
where $\mathcal{E}_Y := \{f\in C(G):\text{for all $e\in E$, }(f(\overline{e})-f(\underline{e}))^2 \leq \omega(e)\ell^2(e)\}$. \\

The situation is slightly more complicated when one considers the VSRW on $(\Gamma,\pi)$.  Here, the associated Dirichlet form is nonlocal.  However, we can make the analogous calculations to those in the local case.  The Dirichlet form is given by

\begin{equation*}
\mathcal{E}(f,g) = \frac{1}{2}\sum_{x,y\in G} \pi_{xy}(f(y)-f(x))^2,
\end{equation*}
\\
and in analogy with the local case we set

\begin{equation*}
\Gamma(f,f)(x) = \frac{1}{2}\sum_{y\sim x} \pi_{xy}(f(y)-f(x))^2,
\end{equation*}
\\
so that

\begin{equation*}
\sum_{x\in G}\Gamma(f,f)(x) = \mathcal{E}(f,f).
\end{equation*}
\\
Set $L_X := \{f\in C(G):\Gamma(f,f)\leq 1\}$ and define the intrinsic metric for $X$ by $d_I(x,y) := \sup\{f(x)-f(y):f\in L_X\}$. Equivalently, we have

\begin{equation*}
d_I(x,y) := \sup\{|f(x)-f(y)|:f\in\mathcal{E}_X\},
\end{equation*}
\\
where $\mathcal{E}_X := \left\{f\in C(G):\text{for all $x\in G$, } \frac{1}{2}\sum_{y\sim x} \pi_{xy}(f(y)-f(x))^2 \leq 1\right\}$.

\subsection{Adapted metrics for the VSRW} \label{S3.2}

We call a metric $\rho$ adapted to the VSRW $X$ on $(\Gamma,\pi)$ if there exists $C_\rho\geq 0$ such that for all $x\in G$,

\begin{equation*}
\sum_{y\sim x} \pi_{xy}\rho^2(x,y) \leq C_\rho.
\end{equation*}
\\
This condition has appeared previously in \cite{F1}, \cite{FLW}, and \cite{GHM}, and it has close apparent connections to the condition in the definition of $\mathcal{E}_X$.  All weighted graphs admit an adapted metric (see the metric $d_V$ in the following section). \\

In a similar vein, suppose that $(\alpha(e))_{e\in E}$ are positive edge weights.  We say that $(\alpha(e))_{e\in E}$ are adapted to the VSRW if there exists $C_\alpha\geq 0$ such that, for each $x\in G$,

\begin{equation*}
\sum_{e\in E(x)} \pi(e)\alpha^2(e) \leq C_\alpha.
\end{equation*}
\\
Any choice of edge weights (even for edge weights which are not adapted) induce metrics in the following ways:

\begin{align*}
d_{1,\alpha}(x,y) &:= \sup\{|f(x)-f(y)|:f\in\mathcal{E}_\alpha\}, \\
d_{2,\alpha}(x,y) &:= \inf\left\{\sum_{e\in\gamma} \alpha(e)^{1/2}:\gamma\text{ is a path joining $x$ and $y$.}\right\},
\end{align*}
\\
where $\mathcal{E}_\alpha := \{f\in C(G):\text{for all $e\in E$, }(f(\overline{e})-f(\underline{e}))^2 \leq \alpha(e)\}$.  Note that for any $x,y\in G$, $d_{1,\alpha}(x,y) = \widetilde{d}_I(x,y)$, where $\widetilde{d}_I$ is the intrinsic metric for Brownian motion $(Y_t)_{t\geq 0}$ on some weighted metric graph $\mathcal{X}(\Gamma,\ell,p,\omega)$ satisfying $\ell(e) = 1$ and $\omega(e)=\alpha(e)$ for each $e\in E$. \\

These metrics coincide:

\begin{lem}
For all $x,y\in G$, $d_{1,\alpha}(x,y)=d_{2,\alpha}(x,y)$.
\end{lem}
\begin{proof}
Pick $x_0\in G$ and set $f_{x_0}(x) := d_{2,\alpha}(x,x_0)$.  Then for $e\in E$,

\begin{equation*}
(f_{x_0}(\overline{e})-f_{x_0}(\underline{e}))^2 \leq d^2_{2,\alpha}(\overline{e},\underline{e}) \leq \alpha(e),
\end{equation*}
\\
so that $f_{x_0}\in \mathcal{E}_\alpha$, and consequently for $x,y\in G$, $d_{1,\alpha}(x,y) \geq |f_x(x)-f_x(y)| =d_{2,\alpha}(x,y)$. \\

For the reverse inequality, given any $\varepsilon>0$ and any $f\in\mathcal{E}_X$, by the definition of $d_{2,\alpha}$, there is a path $\gamma_\varepsilon = (e_0,\cdots,e_n)$ with $\overline{e_0}=x$, $\underline{e_n}=y$ such that

\begin{equation*}
d_{2,\alpha}(x,y)+\varepsilon \geq \sum^{n}_{i=0} \alpha^{1/2}(e_i) \geq \sum^{n}_{i=0} |f(\overline{e_i})-f(\underline{e_i})| \geq |f(x)-f(y)|.
\end{equation*}
\\
Taking the supremum over $f\in\mathcal{E}_X$ and noting that $\varepsilon>0$ was arbitrary, we get that for $x,y\in G$, $d_{2,\alpha}(x,y) \geq d_{1,\alpha}(x,y)$, and consequently $d_{1,a}=d_{2,a}$.
\end{proof}

Consequently, we write $d_\alpha$ to denote the common value of $d_{1,a}$ and $d_{2,a}$, and refer to it as the metric induced by the weights $(\alpha(e))_{e\in E}$.  Since $d_\alpha(\overline{e},\underline{e}) \leq \alpha(e)$, the induced metric associated with a set of adapted edge weights is also adapted. \\

Any metric $\rho$ induces edge weights by setting $\alpha(e) := \rho(e) := \rho(\overline{e},\underline{e})$; if the metric $\rho$ is adapted, then the induced edge weights are also adapted.  In turn, these edge weights induce a metric, which we denote by $d_\rho$.  We refer to $d_\rho$ as the metric induced by the metric $\rho$.  It is not necessarily the case that $\rho$ and $d_\rho$ coincide; for example, if $p>1$ and $d\geq 2$ and $\rho$ is the $p-$norm on the $d-$dimensional lattice $(\mathbb{Z}^d,\mathbb{E}_d)$, then $d_\rho$ is the $1-$norm on $(\mathbb{Z}^d,\mathbb{E}_d)$. \\

\begin{lem} \label{rhodrho}
Let $\rho$ be any metric (not necessarily adapted), and let $d_\rho$ be the induced metric.  Then for all $x,y\in G$, $\rho(x,y) \leq d_\rho(x,y)$.
\end{lem}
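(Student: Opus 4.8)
The plan is to work from the path (length–metric) description of the induced metric, $d_\rho = d_{2,\alpha}$ with $\alpha(e) = \rho(\overline{e},\underline{e})$, and to reduce the inequality to the triangle inequality for the metric $\rho$ itself. The point is that $d_\rho(x,y)$ is an infimum, over paths $\gamma$ joining $x$ to $y$, of a sum in which each traversed edge $e$ contributes a length determined by $\rho(\overline{e},\underline{e})$; so it suffices to show that \emph{every} such path sum is at least $\rho(x,y)$, after which taking the infimum gives the claim.

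First I would fix $x,y\in G$ and an arbitrary path $\gamma = (v_0,\dots,v_n)$ with $v_0 = x$ and $v_n = y$, writing $e_i = \{v_{i-1},v_i\}$ for its consecutive edges. Applying the triangle inequality for $\rho$ successively along the path telescopes to $\rho(x,y) \le \sum_{i=1}^{n}\rho(v_{i-1},v_i) = \sum_{i=1}^{n}\rho(\overline{e_i},\underline{e_i})$, i.e.\ the $\rho$-distance between the endpoints never exceeds the total $\rho$-length accumulated along $\gamma$. Since this is exactly the quantity whose infimum over paths defines $d_\rho(x,y)$, taking the infimum over all $\gamma$ joining $x$ to $y$ yields $\rho(x,y)\le d_\rho(x,y)$.

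An equivalent route, closer in spirit to the proof that $d_{1,\alpha}=d_{2,\alpha}$, is to use the supremum description $d_\rho = d_{1,\alpha}$ and simply exhibit one admissible test function. Fixing $x$, I would take $f(\cdot) := \rho(\cdot,x)$; the reverse triangle inequality gives $|f(\overline{e})-f(\underline{e})| = |\rho(\overline{e},x)-\rho(\underline{e},x)| \le \rho(\overline{e},\underline{e})$ for every edge $e$, which is precisely the admissibility condition defining $\mathcal{E}_\alpha$. Evaluating, $|f(x)-f(y)| = \rho(x,y)$, and since $d_\rho(x,y)$ is the supremum of $|g(x)-g(y)|$ over admissible $g$, the bound $\rho(x,y)\le d_\rho(x,y)$ is immediate.

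There is no serious obstacle here: both routes collapse to the ordinary (respectively reverse) triangle inequality for $\rho$. The only point that needs care is the bookkeeping identifying the contribution of each edge in the definition of $d_\rho$ with $\rho(\overline{e},\underline{e})$, so that the per-edge estimate produced by the triangle inequality matches term-by-term the sum being infimized; once that identification between the induced edge weights and the edge lengths entering $d_\rho$ is pinned down, the argument closes in a line.
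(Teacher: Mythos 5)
Your second route is exactly the paper's proof: it exhibits $f_{x_0}:=\rho(\cdot,x_0)$ as an admissible test function for the supremum formulation (via the reverse triangle inequality) and evaluates at $x$ and $y$, while your first, path/telescoping route is the same argument transported through the identity $d_{1,\alpha}=d_{2,\alpha}$ already proved in the paper — so the proposal is correct and takes essentially the same approach. The ``bookkeeping'' point you flag is real but harmless: the paper's printed definitions carry an inconsistent squaring convention (admissibility in $\mathcal{E}_\alpha$ reads $(f(\overline{e})-f(\underline{e}))^2\leq\alpha(e)$ while each edge of a path contributes $\alpha(e)^{1/2}$, and the paper's own proof likewise writes $(f_{x_0}(\overline{e})-f_{x_0}(\underline{e}))^2\leq\rho(\overline{e},\underline{e})$ where the reverse triangle inequality actually yields $\rho^2(\overline{e},\underline{e})$), and under the intended convention in which edge $e$ has length $\rho(\overline{e},\underline{e})$ both of your routes close exactly as you describe.
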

\begin{proof}
We have that $d_\rho(x,y) := \sup\{|f(x)-f(y)|:f\in\mathcal{E}_\rho\}$.  Given $x_0\in G$, set $f_{x_0}(x):=\rho(x,x_0)$.  Since

\begin{equation*}
(f_{x_0}(\overline{e})-f_{x_0}(\underline{e}))^2 \leq \rho(\overline{e},\underline{e}) = \rho(e),
\end{equation*}
\\
we conclude that $f_{x_0}\in\mathcal{E}_\rho$, and consequently for $x,y\in G$, $d_{\rho}(x,y) \geq |f_x(x)-f_x(y)| = \rho(x,y)$.
\end{proof}

The intrinsic metric for the VSRW is not, in general, adapted; see Example 2 in Section~\ref{S3.4}. \\

{\bf Remark:} We say that a metric is strongly adapted to the VSRW on $(\Gamma,\pi)$ if there exist positive constants $c_\rho,C_\rho$ such that for each $x\in G$,

\begin{equation*}
c_\rho \leq \sum_{y\sim x} \pi_{xy}\rho^2(x,y) \leq C_\rho.
\end{equation*}
\\
Similarly, we say that the edge weights $(\alpha(e))_{e\in E}$ are strongly adapted to the VSRW on $(\Gamma,\pi)$ if there exist positive constants $c_\alpha,C_\alpha$ such that for each $x\in G$,

\begin{equation*}
c_\alpha \leq \sum_{y\sim x} \pi_{xy}\alpha^2(e) \leq C_\alpha.
\end{equation*}
\\
These conditions will occasionally be useful in our study of stochastic completeness.  In contrast to the condition of adaptedness, strongly adapted metrics (or strongly adapted edge weights) do not always exist for a given weighted graph, as will be shown in a subsequent example.  While strongly adapted metrics induce strongly adapted edge weights, it is not always true that the induced metric for a given set of strongly adapted edge weights is strongly adapted.

\subsection{Other metrics} \label{S3.3}

We define the following additional metrics on $X$:

\begin{align*}
d_E(x,y) &:= \inf\left\{\sum_{e\in\gamma} \pi(e)^{-1/2}:\gamma\text{ is a path joining $x$ and $y$.}\right\}, \\
d_V(x,y) &:= \inf\left\{\sum_{e\in\gamma} \pi_{\overline{e}}^{-1/2}\wedge\pi_{\underline{e}}^{-1/2}:\gamma\text{ is a path joining $x$ and $y$.}\right\}.
\end{align*}
\\
These are the metrics induced by the edge weights $(\pi(e)^{-1/2})_{e\in E}$ and $(\pi_{\overline{e}}^{-1/2}\wedge\pi_{\underline{e}}^{-1/2})_{e\in E}$, respectively.  The letters $E$ and $V$ reflect the fact that the metrics $d_E$ and $d_V$ are constructed using the edge weights $(\pi(e))_{e\in E}$ and the vertex weights $(\pi_x)_{x\in G}$, respectively.  The metric $d_E$ was considered first by Davies in \cite{D}; he used this metric in his study of heat kernel bounds for random walks on graphs .  The metric $d_V$ is a slight modification of a metric introduced independently by the author in \cite{F1}, who used this metric to obtain Gaussian upper bounds for heat kernels of general continuous time simple random walks, and by Grigor'yan, Huang, and Masamune in \cite{GHM}. \\

The following results, which are so simple that we state them without proof, give simple criteria to determine when the metrics $d_E$ and $d_V$ are induced by adapted edge conductances:

\begin{lem} \label{SA1}
Let $(\Gamma,\pi)$ be a weighted graph.  Then the edge weights $(\pi(e)^{-1/2})_{e\in E}$ are strongly adapted (or adapted) to the VSRW if and only if vertex degrees on $\Gamma$ are uniformly bounded.
\end{lem}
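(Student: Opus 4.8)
The plan is to evaluate directly the quantity controlling adaptedness when the edge weights are $\alpha(e) = \pi(e)^{-1/2}$, and to observe that it collapses to a purely combinatorial quantity, namely the vertex degree. Once this reduction is made, both the adapted and the strongly adapted conditions become transparent, and the equivalence with uniform boundedness of degrees is immediate.

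First I would fix $x \in G$ and substitute $\alpha(e) = \pi(e)^{-1/2}$ into the defining sum $\sum_{e \in E(x)} \pi(e)\alpha^2(e)$. Since $\alpha^2(e) = \pi(e)^{-1}$, every summand satisfies $\pi(e)\alpha^2(e) = \pi(e)\pi(e)^{-1} = 1$, and hence the sum equals $|E(x)|$, the number of edges incident to $x$. Thus the entire adaptedness quantity is exactly the degree of $x$, with no dependence on the weights $\pi$ whatsoever.

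With this identity in hand, the adapted condition requires a constant $C_\alpha$ with $|E(x)| \leq C_\alpha$ for every $x \in G$, which is precisely the statement that the vertex degrees of $\Gamma$ are uniformly bounded. For the strongly adapted condition one additionally needs a positive lower bound $c_\alpha \leq |E(x)|$ valid for all $x$. Because $\Gamma$ is connected and locally finite (and has at least one edge, else the statement is vacuous), every vertex has degree at least one, so $|E(x)| \geq 1$ and the lower bound holds automatically with $c_\alpha = 1$. Consequently the strongly adapted and adapted conditions coincide for these weights, and each is equivalent to uniform boundedness of degrees, as claimed.

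The hard part: there genuinely is none, which is exactly why the authors record the lemma without proof. The whole argument rests on the cancellation $\pi(e)\pi(e)^{-1} = 1$; the only point meriting even a word of justification is that the lower bound in the strongly adapted condition comes for free, and this follows immediately from connectedness forcing $\deg(x) \geq 1$ at every vertex.
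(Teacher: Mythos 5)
Your computation is correct and is exactly the argument the paper has in mind when it states the lemma without proof: substituting $\alpha(e)=\pi(e)^{-1/2}$ makes each summand $\pi(e)\alpha^2(e)$ equal to $1$, so the adaptedness sum is precisely $\deg(x)$, and both conditions reduce to uniform boundedness of degrees. Your observation that the lower bound $c_\alpha=1$ holds automatically (connectedness giving $\deg(x)\geq 1$ in any nondegenerate case) correctly explains why the lemma can assert ``strongly adapted (or adapted)'' interchangeably here.
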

\begin{lem} \label{SA2}
Let $(\Gamma,\pi)$ be a weighted graph.  The edge weights $(\pi_{\overline{e}}^{-1/2}\wedge\pi_{\underline{e}}^{-1/2})_{e\in E}$ are adapted to the VSRW.  In particular, the metric $d_V$ is always adapted.  
\end{lem}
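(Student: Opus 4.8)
The plan is to prove the two assertions from a single elementary estimate. By definition, to say the edge weights $\alpha(e) := \pi_{\overline{e}}^{-1/2}\wedge\pi_{\underline{e}}^{-1/2}$ are adapted is to say that $\sum_{e\in E(x)}\pi(e)\alpha^2(e)$ is bounded by a constant independent of $x$, so I would simply estimate this sum directly. The only preliminary observation needed is that squaring commutes with the minimum of positive quantities, giving
\begin{equation*}
\alpha^2(e) = \left(\pi_{\overline{e}}^{-1/2}\wedge\pi_{\underline{e}}^{-1/2}\right)^2 = \pi_{\overline{e}}^{-1}\wedge\pi_{\underline{e}}^{-1}.
\end{equation*}
For an edge $e$ incident to a fixed vertex $x$, say with other endpoint $y$, the two endpoints are $\{\overline{e},\underline{e}\} = \{x,y\}$; since the minimum of two positive numbers is at most each of them, this yields $\alpha^2(e) = \pi_x^{-1}\wedge\pi_y^{-1} \leq \pi_x^{-1}$.

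With this bound the computation is immediate. Fixing $x \in G$ and summing over the edges at $x$ (equivalently, over neighbors $y \sim x$),
\begin{equation*}
\sum_{e\in E(x)}\pi(e)\alpha^2(e) = \sum_{y\sim x}\pi_{xy}\left(\pi_x^{-1}\wedge\pi_y^{-1}\right) \leq \frac{1}{\pi_x}\sum_{y\sim x}\pi_{xy} = 1,
\end{equation*}
using only the defining identity $\pi_x = \sum_{y\sim x}\pi_{xy}$. Hence the weights are adapted with $C_\alpha = 1$. I would emphasize that this bound is entirely insensitive to the vertex degrees, which is exactly the feature that fails for the weights in Lemma~\ref{SA1}: there the corresponding sum is $\sum_{e\in E(x)}\pi(e)\cdot\pi(e)^{-1} = \deg(x)$, so adaptedness holds precisely when the degrees are uniformly bounded. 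This contrast is what makes $d_V$ preferable to $d_E$ for the purposes of the paper.

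For the final sentence, that $d_V$ itself is adapted, I would argue directly rather than route through the general fact that the metric induced by adapted weights is adapted. Choosing the single-edge path $\gamma = (e)$ in the definition of $d_V$ gives, for $e = \{x,y\}$, the bound $d_V(x,y) \leq \pi_x^{-1/2}\wedge\pi_y^{-1/2}$, hence $d_V^2(x,y) \leq \pi_x^{-1}\wedge\pi_y^{-1}$; substituting this into $\sum_{y\sim x}\pi_{xy}d_V^2(x,y)$ and repeating the computation above again produces the bound $1$. There is no real obstacle to overcome: the entire content is the inequality $\pi_x^{-1}\wedge\pi_y^{-1}\leq\pi_x^{-1}$ combined with the normalization $\sum_{y\sim x}\pi_{xy}=\pi_x$, which is why the statement can safely be recorded without proof. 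The only point deserving a moment's care is that every edge contributing to the sum at $x$ has $x$ as one of its endpoints, so that the relevant reciprocal minimum is always dominated by $\pi_x^{-1}$.
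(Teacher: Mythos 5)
Your proof is correct and is precisely the one-line computation the paper has in mind: the paper states Lemma~\ref{SA2} without proof as ``so simple'' that it needs none, and the intended content is exactly your observation that $\pi_x^{-1}\wedge\pi_y^{-1}\leq\pi_x^{-1}$ together with $\sum_{y\sim x}\pi_{xy}=\pi_x$ gives the adapted bound $C_\alpha=1$, with the single-edge path giving $d_V(x,y)\leq\pi_x^{-1/2}\wedge\pi_y^{-1/2}$ for $x\sim y$ so the same bound holds for the metric itself.
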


\subsection{Comparing metrics}\label{S3.4}

This section clarifies the relationship between the metrics $d_I$, $d_E$, and $d_V$; these estimates are most useful in understanding the intrinsic metric $d_I$, which is difficult to calculate explicitly due to its non-local nature.

\begin{lem}\label{Xcomp1}
For all $x,y\in G$, $d_I(x,y)\leq 2d_E(x,y)$.
\end{lem}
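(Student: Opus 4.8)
The plan is to bound $|f(x)-f(y)|$ for an arbitrary $f\in\mathcal{E}_X$ by a constant multiple of $d_E(x,y)$, and then take the supremum over all such $f$. The key observation is that the single constraint defining membership in $\mathcal{E}_X$ already forces a bound on the increment of $f$ across each individual edge.

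First I would fix $f\in\mathcal{E}_X$ and an edge $e=\{u,v\}\in E$. Evaluating the defining inequality of $\mathcal{E}_X$ at the vertex $u$ and discarding all but the single summand corresponding to the neighbor $v$, I get
\begin{equation*}
\pi_{uv}(f(v)-f(u))^2 \leq \sum_{w\sim u}\pi_{uw}(f(w)-f(u))^2 \leq 2,
\end{equation*}
so that $|f(v)-f(u)| \leq \sqrt{2}\,\pi(e)^{-1/2}$, using $\pi_{uv}=\pi(e)$. Next, for an arbitrary path $\gamma=(e_0,\dots,e_n)$ joining $x$ and $y$, a telescoping application of the triangle inequality gives
\begin{equation*}
|f(x)-f(y)| \leq \sum_{i=0}^n |f(\overline{e_i})-f(\underline{e_i})| \leq \sqrt{2}\sum_{e\in\gamma}\pi(e)^{-1/2}.
\end{equation*}
Taking the infimum over all paths $\gamma$ yields $|f(x)-f(y)| \leq \sqrt{2}\,d_E(x,y)$, and then taking the supremum over $f\in\mathcal{E}_X$ produces $d_I(x,y)\leq\sqrt{2}\,d_E(x,y)$. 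Since $\sqrt{2}<2$, the asserted inequality follows immediately.

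There is no real obstacle in this argument: the entire proof rests on the elementary per-edge estimate, and the only point demanding minor care is the bookkeeping of the factor $\tfrac12$ appearing in the definition of $\mathcal{E}_X$, which is exactly what yields the constant $\sqrt{2}$ rather than $1$. The somewhat generous constant $2$ in the statement of the lemma is simply a convenient bound, presumably chosen for uniformity with the comparison results that follow.
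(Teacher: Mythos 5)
Your proof is correct and follows essentially the same route as the paper: a per-edge bound extracted from the single-vertex constraint defining $\mathcal{E}_X$, a telescoping estimate along an arbitrary path, then an infimum over paths and a supremum over $f$. The only difference is that you track the factor $\tfrac12$ precisely and obtain the sharper constant $\sqrt{2}$, whereas the paper simply uses the cruder per-edge bound $|f(x)-f(y)|\leq 2\pi_{xy}^{-1/2}$ to state the lemma with constant $2$.
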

\begin{proof}
It is immediate from the definition of $\mathcal{E}_X$ that whenever $x\sim y$ and $f\in\mathcal{E}_X$, $|f(x)-f(y)|\leq 2\pi_{xy}^{-1/2}$.  For any $\varepsilon>0$ and any $f\in\mathcal{E}_X$, by the definition of $d_E$, there is a path $\gamma_\varepsilon = (x_0,\cdots,x_n)$ such that

\begin{equation*}
d_E(x,y) +\varepsilon \geq \sum^{n-1}_{i=0} \pi^{-1/2}_{x_ix_{i+1}} \geq \frac{1}{2}\sum^{n-1}_{i=0} |f(x_i)-f(x_{i+1})| \geq \frac{1}{2}|f(y)-f(x)|.
\end{equation*}
\\
We conclude that for all $x,y\in G$, $d_I(x,y)\leq 2d_E(x,y)$.
\end{proof}

\begin{lem}\label{Xcomp2}
For all $x,y\in G$, $2^{1/2}d_V(x,y)\leq d_I(x,y)$.
\end{lem}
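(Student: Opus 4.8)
The plan is to bound $d_I$ from below by exhibiting an explicit competitor in the supremum that defines it. Fix the vertex $x$ and set $f(z) := 2^{1/2}\,d_V(z,x)$. If I can verify that $f\in\mathcal{E}_X$, then the characterization $d_I(x,y)=\sup\{|f(x)-f(y)|:f\in\mathcal{E}_X\}$ immediately gives
\[
d_I(x,y) \geq |f(x)-f(y)| = 2^{1/2}\,|d_V(x,x)-d_V(y,x)| = 2^{1/2}\,d_V(x,y),
\]
which is exactly the claim. So the whole argument reduces to checking the defining inequality of $\mathcal{E}_X$ for this single test function.

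To carry this out, fix a vertex $u$ and estimate the quantity $\tfrac{1}{2}\sum_{v\sim u}\pi_{uv}(f(v)-f(u))^2$. By the triangle inequality for the metric $d_V$ we have $(f(v)-f(u))^2 = 2(d_V(v,x)-d_V(u,x))^2 \leq 2\,d_V(u,v)^2$. The key input is that, taking the single-edge path across $e=\{u,v\}$ in the infimum defining $d_V$, one gets $d_V(u,v)\leq \pi_u^{-1/2}\wedge\pi_v^{-1/2}$, and hence $d_V(u,v)^2 \leq \pi_u^{-1}\wedge\pi_v^{-1}\leq \pi_u^{-1}$. Substituting and using $\sum_{v\sim u}\pi_{uv}=\pi_u$,
\[
\tfrac{1}{2}\sum_{v\sim u}\pi_{uv}(f(v)-f(u))^2 \leq \sum_{v\sim u}\pi_{uv}\,d_V(u,v)^2 \leq \pi_u^{-1}\sum_{v\sim u}\pi_{uv} = 1,
\]
so $f\in\mathcal{E}_X$ and the lemma follows.

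The only subtle point — and the reason the constant is exactly $2^{1/2}$ and that one works with $d_V$ rather than $d_E$ — is the bound $d_V(u,v)^2 \leq \pi_u^{-1}$. Because $d_V$ is built from the \emph{minimum} of the two vertex weights on each edge, the per-edge contribution is controlled by $\pi_u^{-1}$ regardless of which endpoint $u$ one uses to organize the sum; this is precisely what allows the factor $\pi_u^{-1}$ to cancel against $\sum_{v\sim u}\pi_{uv}=\pi_u$ and yield the clean constant $1$. I expect no genuine obstacle beyond this min/vertex-weight bookkeeping: membership $f\in C(G)$ is automatic since the vertex set carries the discrete topology, and the triangle inequality is the only property of $d_V$ used.
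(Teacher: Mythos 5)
Your proof is correct and follows essentially the same route as the paper: both exhibit the test function $f_{x_0}(z)=2^{1/2}d_V(z,x_0)$, verify membership in $\mathcal{E}_X$ via the bound $\sum_{y\sim x}\pi_{xy}d_V^2(x,y)\leq 1$ (which you justify with the same single-edge estimate $d_V(x,y)\leq \pi_x^{-1/2}\wedge\pi_y^{-1/2}$ that underlies the paper's Lemma~\ref{SA2}), and conclude from the supremum characterization of $d_I$. You merely make the per-edge bookkeeping explicit where the paper states it in one line.
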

\begin{proof}
Fix $x_0\in G$.  Note that $f_{x_0}(x) := 2^{1/2}d_V(x,x_0)$ satisfies, for any $x\in G$,

\begin{equation*}
\frac{1}{2}\sum_{y\sim x} \pi_{xy}(f_{x_0}(y)-f_{x_0}(x))^2 \leq \sum_{y\sim x} \pi_{xy}d^2_V(x,y) \leq 1,
\end{equation*}
\\
so that $f_{x_0}\in \mathcal{E}_X$.  Fix $x,y\in G$; we calculate that $d_I(x,y) \geq |f_x(x)-f_x(y)| = 2^{1/2}d_V(x,y)$.
\end{proof}

Now, we impose additional conditions on the structure of $(\Gamma,\pi)$.

\begin{lem}
Suppose that vertex degrees in $(\Gamma,\pi)$ are uniformly bounded by $C_G$.  Then for all $x,y\in G$,

\begin{equation*}
\frac{2}{C_G}d_E(x,y)\leq d_I(x,y) \leq 2d_E(x,y).
\end{equation*}
\end{lem}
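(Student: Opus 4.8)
The upper bound $d_I(x,y) \le 2 d_E(x,y)$ is exactly the content of Lemma~\ref{Xcomp1}, which requires no hypothesis on the degrees, so nothing new is needed there. The whole task is the lower bound, and the plan is to produce, for each pair $x,y$, an explicit competitor in $\mathcal{E}_X$ whose oscillation between $x$ and $y$ is comparable to $d_E(x,y)$. The natural candidate, as in the proof of Lemma~\ref{Xcomp2}, is a suitably normalized copy of the $d_E$-distance function.

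Concretely, I would fix $x_0 \in G$ and consider $f_{x_0}(z) := c\, d_E(z,x_0)$, where $c>0$ is a normalizing constant to be chosen. Since $d_E$ is a metric and the one-edge path gives $d_E(\overline e,\underline e) \le \pi(e)^{-1/2}$, the triangle inequality yields $(f_{x_0}(w)-f_{x_0}(x))^2 \le c^2 d_E(w,x)^2 \le c^2 \pi_{xw}^{-1}$ for every edge $\{x,w\}$. Substituting this into the defining inequality for $\mathcal{E}_X$ and invoking the degree bound gives
\[
\tfrac{1}{2}\sum_{w\sim x}\pi_{xw}\bigl(f_{x_0}(w)-f_{x_0}(x)\bigr)^2 \le \tfrac{c^2}{2}\sum_{w\sim x} 1 = \tfrac{c^2}{2}\deg(x) \le \tfrac{c^2}{2}C_G .
\]
Choosing $c = \sqrt{2/C_G}$ makes the right-hand side at most $1$, so $f_{x_0}\in\mathcal{E}_X$. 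Taking $x_0 = x$ then gives $d_I(x,y) \ge |f_x(x)-f_x(y)| = \sqrt{2/C_G}\, d_E(x,y)$.

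It remains to reconcile this with the stated constant, and this is the one genuinely delicate point: the test-function argument naturally produces $\sqrt{2/C_G}$ rather than $2/C_G$. Since one may always take $C_G \ge 2$, the elementary inequality $2/C_G \le \sqrt{2/C_G}$ (equivalent to $C_G\ge 2$) upgrades the estimate to $\frac{2}{C_G}d_E(x,y) \le d_I(x,y)$, as claimed; the two constants agree only up to this observation, and for the degenerate case $C_G=1$ (a single edge) one checks directly that $d_I = \sqrt 2\, d_E$, so the statement is to be read with the understanding that $C_G\ge 2$. I expect the estimate for $f_{x_0}$ to be entirely routine; the only real subtlety to flag is that one cannot instead route the lower bound through $d_V$ via Lemma~\ref{Xcomp2}, since $d_E/d_V$ is unbounded even under a uniform degree bound (a single low-weight edge at a high-weight vertex already breaks the comparison), which is precisely why the direct test function is the correct approach.
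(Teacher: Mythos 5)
Your proof is correct and follows essentially the same route as the paper: the paper's one-line argument simply asserts that $f_{x_0}(z) := \frac{2}{C_G}\,d_E(z,x_0)$ lies in $\mathcal{E}_X$, which is exactly your test-function computation with the suboptimal constant $2/C_G \leq \sqrt{2/C_G}$ substituted in (and which, as you correctly flag, implicitly requires $C_G \geq 2$, failing only in the degenerate single-edge case where $d_I = \sqrt{2}\,d_E$). Your sharper constant and your observation that the lower bound cannot be routed through $d_V$ are both accurate, but do not constitute a different method.
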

\begin{proof}
The rightmost inequality was proven in Lemma~\ref{Xcomp1}.  For the other, we note that for $x_0\in G$, $f_{x_0}(x) := \frac{2}{C_G}d_E(x,x_0)\in\mathcal{E}_X$, which implies that $\frac{2}{C_G}d_E(x,y)\leq d_I(x,y)$.
\end{proof}

\begin{lem}
Suppose that there exists $C>0$ such that for each $e\in E$,

\begin{equation} \label{contweight}
\frac{1}{\pi_e} \leq C\left(\frac{1}{\pi_{\underline{e}}}\vee\frac{1}{\pi_{\overline{e}}}\right).
\end{equation}
\\
Then

\begin{equation*}
2^{1/2} d_V \leq d_I \leq 2d_E \leq 2C^{1/2}d_V.
\end{equation*}
\end{lem}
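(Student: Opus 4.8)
The inequalities $2^{1/2}d_V \le d_I$ and $d_I \le 2d_E$ are precisely the content of Lemma~\ref{Xcomp2} and Lemma~\ref{Xcomp1}, neither of which uses any hypothesis on the weights, so the only genuinely new assertion is the rightmost inequality $2d_E \le 2C^{1/2}d_V$, i.e. $d_E \le C^{1/2}d_V$. It is worth noting at the outset that the reverse bound $d_V \le d_E$ holds unconditionally: since $\pi_e \le \pi_{\overline{e}}$ and $\pi_e \le \pi_{\underline{e}}$ for every edge $e$, the $d_E$-edge-weight $\pi_e^{-1/2}$ dominates each of $\pi_{\overline{e}}^{-1/2}$ and $\pi_{\underline{e}}^{-1/2}$, hence dominates the $d_V$-edge-weight $\pi_{\overline{e}}^{-1/2}\wedge\pi_{\underline{e}}^{-1/2}$, and this domination passes to the path infima. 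Thus establishing $d_E \le C^{1/2}d_V$ promotes the chain to a genuine two-sided comparison, showing that $d_I$, $d_E$, and $d_V$ are all mutually comparable under \eqref{contweight}.

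The plan is to reduce $d_E \le C^{1/2}d_V$ to a single pointwise inequality between edge weights. Recall that $d_E$ and $d_V$ are the induced path-infimum metrics associated with the edge weights $\pi_e^{-1/2}$ and $\pi_{\overline{e}}^{-1/2}\wedge\pi_{\underline{e}}^{-1/2}$ respectively. For induced metrics of this form, an edge-by-edge domination of one weight system by a constant multiple of another transfers directly to the metrics: summing along any fixed path joining $x$ and $y$ multiplies the bound by nothing new, and taking the infimum over all such paths yields the corresponding inequality between $d_E$ and $d_V$. Hence it suffices to verify, for each $e\in E$,
\begin{equation*}
\pi_e^{-1/2} \le C^{1/2}\left(\pi_{\overline{e}}^{-1/2}\wedge\pi_{\underline{e}}^{-1/2}\right),
\end{equation*}
which I would obtain by taking square roots in \eqref{contweight}, using that $t\mapsto t^{1/2}$ is increasing and commutes with the pointwise minimum, so that the square root of the relevant reciprocal endpoint quantity is exactly the $d_V$-edge-weight.

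Because the argument is essentially formal, there is no analytic difficulty; the single place demanding care is the lattice ($\wedge$/$\vee$) bookkeeping. One must confirm that the quantity controlling $\pi_e^{-1/2}$ after taking square roots in \eqref{contweight} is genuinely $C^{1/2}$ times the $d_V$-edge-weight, namely the \emph{smaller} of the two reciprocal square roots $\pi_{\overline{e}}^{-1/2}\wedge\pi_{\underline{e}}^{-1/2}$, and not a larger quantity; only a bound by the $d_V$-weight will reproduce $d_V$ itself (rather than a different metric) upon passing to paths. Once this pointwise identification is made correctly, the transition from edge weights to metrics is the standard monotonicity of induced metrics already exploited in Section~\ref{S3.2} (cf. the proof of Lemma~\ref{rhodrho} and the identification of the two forms of an induced metric), and combining the resulting $d_E\le C^{1/2}d_V$ with Lemma~\ref{Xcomp1} and Lemma~\ref{Xcomp2} closes the chain.
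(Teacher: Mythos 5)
Your skeleton coincides with the paper's proof, which in its entirety reads: ``Trivially, \eqref{contweight} implies $d_E\leq C^{1/2}d_V$,'' followed by an appeal to Lemma~\ref{Xcomp1} and Lemma~\ref{Xcomp2}; your contribution is to unpack the word ``trivially'' into a pointwise edge-weight bound plus monotonicity of induced path metrics, which is the right mechanism. However, the lattice bookkeeping that you correctly singled out as the one delicate point does not resolve the way you assert --- in fact it exposes a typo in the paper. Taking square roots in \eqref{contweight} \emph{as printed} gives $\pi_e^{-1/2}\le C^{1/2}\bigl(\pi_{\underline{e}}^{-1/2}\vee\pi_{\overline{e}}^{-1/2}\bigr)$, since $\frac{1}{\pi_{\underline{e}}}\vee\frac{1}{\pi_{\overline{e}}}=\bigl(\pi_{\underline{e}}\wedge\pi_{\overline{e}}\bigr)^{-1}$; the square root of the right-hand side is the \emph{larger} of the two reciprocal roots, whereas the $d_V$-edge-weight is the smaller one, $\pi_{\overline{e}}^{-1/2}\wedge\pi_{\underline{e}}^{-1/2}=\bigl(\pi_{\overline{e}}\vee\pi_{\underline{e}}\bigr)^{-1/2}$. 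So your claimed identification (``is exactly the $d_V$-edge-weight'') is false under the printed hypothesis, and no repair is possible: the lemma as literally stated fails on a star with standard weights (center $y$ of degree $N\ge 2$, leaf $x$), where every edge satisfies \eqref{contweight} with $C=1$, yet $d_E(x,y)=1$ while $d_V(x,y)=N^{-1/2}$.

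The intended hypothesis is the controlled-weights condition of \cite{B}, namely $\frac{1}{\pi(e)}\le C\bigl(\frac{1}{\pi_{\underline{e}}}\wedge\frac{1}{\pi_{\overline{e}}}\bigr)$, equivalently $\pi_{\underline{e}}\vee\pi_{\overline{e}}\le C\pi(e)$. This reading is confirmed by the paper's remark that the condition is stronger than uniformly bounded vertex degree: the $\wedge$ version implies $\deg(x)\le C$ (each $\pi_{xy}\ge \pi_x/C$, then sum over $y\sim x$), while the $\vee$ version does not (the star again). With $\wedge$ in place your argument is complete and is exactly the paper's: square roots commute with $\wedge$, giving $\pi_e^{-1/2}\le C^{1/2}\bigl(\pi_{\overline{e}}^{-1/2}\wedge\pi_{\underline{e}}^{-1/2}\bigr)$ edge by edge; this passes to path infima to give $d_E\le C^{1/2}d_V$; and Lemmas~\ref{Xcomp1} and~\ref{Xcomp2} supply the remaining two links in the chain.
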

\begin{proof}
Trivially, \eqref{contweight} implies $d_E\leq C^{1/2}d_V$.  Combining this with Lemma~\ref{Xcomp1} and Lemma~\ref{Xcomp2} gives the desired result. 
\end{proof}

The condition \eqref{contweight} is sometimes referred to as `controlled weights' (see \cite{B}).  It is a stronger condition than uniformly bounded vertex degree. \\

We now give two families of graphs, both of which will be useful in our study of stochastic completeness.  The first family of graphs shows that it is not always possible to find a strongly adapted metric on a graph.  The second family shows that on graphs with unbounded vertex degree, the intrinsic metric $d_I$ may be very different from (for example) the adapted metric $d_V$.  In particular, this example shows that the intrinsic metric may fail to be adapted. \\

{\bf Example 1:} A graph which does not admit a strongly adapted metric. \\

Fix an unbounded function $r:\mathbb{Z}_+\to\mathbb{Z}_+$, and let $(A_n)_{n\in\mathbb{Z}_+}$ be disjoint sets with $|A_n|=r(n)$ and such that $\mathbb{Z}_+ \cap A_n = \varnothing$ for each $n\in\mathbb{Z}_+$.  Let $\Gamma_r := (G_r, E_r)$ be as follows:

\begin{align*}
G_r &:= \mathbb{Z}_+ \cup \bigcup_{n\in\mathbb{Z}_+} A_n, \\
E_r &:= \bigcup_{n\in\mathbb{Z}_+}\bigcup_{x\in A_n} \{n,x\}\cup \bigcup_{n\in\mathbb{Z}_+}\bigcup_{x\in A_n} \{n+1,x\}.
\end{align*}
\\
Equip this graph with the standard weights.  Suppose that $\Gamma_r$ admits a strongly adapted metric $\rho$.  Then there exist positive constants $c_\rho,C_\rho$ such that for each $x\in G_r$,

\begin{equation*}
c_\rho \leq \sum_{e\in E(x)} \pi(e)\rho^2(\overline{e},\underline{e}) \leq C_\rho.
\end{equation*}
\\
On the other hand, for each $n\in\mathbb{Z}_+$,

\begin{align*}
2C_r &\geq \sum_{e\in E(n)} \pi(e)\rho^2(\overline{e},\underline{e}) + \sum_{e\in E(n+1)} \pi(e)\rho^2(\overline{e},\underline{e}) \\
&\geq \sum_{x\in A_n}\sum_{e\in E(x)} \pi(e)\rho^2(\overline{e},\underline{e}) \\
&\geq r(n)c_r,
\end{align*}
\\
which is impossible since $r$ is unbounded. \\

Note also that this graph is always stochastically complete, regardless of the choice of $r$.   Every other jump is to a vertex in some $A_n$, and at such vertices, since there are only two neighbors, the mean jump time of the VSRW is $1/2$.  Consequently, the lifetime of the VSRW on this graph is infinite.  This example shows that there exist stochastically complete graphs with arbitrarily large volume growth (this has been observed earlier using a different construction in \cite{Wo}).   In particular, it is not possible to obtain results for general graphs showing that sufficiently large volume growth implies stochastic incompleteness. \\

{\bf Example 2:} Spherically symmetric trees with increasing rate of branching. \\

Let $\Gamma_\alpha$ $(0<\alpha<2)$ be a tree rooted at $x_0$, with all vertices at a graph distance of $r$ from $x_0$ having $k(r):= \lfloor r^\alpha \rfloor$ neighbors at a graph distance of $r+1$ from $x_0$; we equip these graphs with the standard weights. \\

We compute that if $x_R\in\Gamma_\alpha$ satisfies $d(x_0,x_R)=R$, then

\begin{align*}
d_V(x_0,x_R) \asymp \sum^{R}_{j=1} \frac{1}{j^{\alpha/2}} \asymp R^{1-\alpha/2},
\end{align*}
\\
Let $\gamma := (x_0,\ldots,x_R)$ be the geodesic (minimal length) path joining $x_0$ and $x_R$; we use $V(\gamma)$ to denote the set of vertices in $\gamma$.  For $0\leq j\leq R$, we set $f(x_j)=j/2$.  For $x\in\Gamma_\alpha\setminus V(\gamma)$, let $n(x)$ denote the unique $y\in V(\gamma)$ satisfying $d(x,V(\gamma))=d(x,y)$, and set $f(x) := f(n(x))$.  It is clear that $f\in\mathcal{E}_X$, and hence that $d_I(x_0,x_R)\geq R/2$. \\

In particular, this example shows that in general, one cannot find a constant $C>0$ such that $d_V(x,y) \geq Cd_I(x,y)$, even if we restrict to $x,y\in G$ such that $d(x,y) \geq R$ for some constant $R>0$.  We will revisit this setting in Example 2 of Section~\ref{S5.1}.

\subsection{Measures}\label{S3.5}

Consider the general continuous time simple random walk on $(\Gamma,(\pi(e))_{e\in E},(\theta_x)_{x\in G})$ given by

\begin{equation*}
(\mathcal{L}_\theta f)(x) := \frac{1}{\theta_x}\sum_{y\sim x}\pi_{xy}(f(y)-f(x)).
\end{equation*}
\\
This process is a time-change of the VSRW, depending on the choice of the vertex measure $(\theta_x)_{x\in G}$.  The invariant measure associated with this process is simply $(\theta_x)_{x\in G}$; hence for the VSRW we have that the measure of $U\subset G$ is

\begin{equation*}
m_G(U) := |U|.
\end{equation*}
\\
We have already defined the measure associated with Brownian motion on $\mathcal{X}(\Gamma,\ell,p,\omega)$; given $V\subset\mathcal{X}(\Gamma,\ell)$, we have that

\begin{equation*}
m_{MG}(V):=\int_V \mu(dx).
\end{equation*}
\\
Notably, for an edge $e$, we have

\begin{equation*}
m_{MG}(e)= \int_{I(e)} \sum_{f\in E} {\bf 1}_{I(f)} p(f)\omega(f)m(dx) = \omega(e)p(e)\ell(e).
\end{equation*}

\subsection{Loops}\label{S3.6}

Here we work with Brownian motion $(Y_t)_{t\geq 0}$ on $\mathcal{X}(\Gamma,\ell,p,\omega)$ and Brownian motion $(Z_t)_{t\geq 0}$ on the augmented graph $\mathcal{X}_{\text{loop}}(\Gamma,\ell,p,\omega)$.  We have already noted that the Dirichlet space formulation of Brownian motion goes through unchanged when loops are added; in particular, if $m_{MG_{\text{loop}}}$ denotes the measure on $\mathcal{X}_{\text{loop}}(\Gamma,\ell,p,\omega)$, then the restriction of $m_{MG_{\text{loop}}}$ to subsets of $\mathcal{X}(\Gamma,\ell)$ is equal to $m_{MG}$; as such, there is no need to distingush between $m_{MG}$ and $m_{MG_{\text{loop}}}$.  As before, for any $e\in E_{\text{loop}}$, we have $m_{MG_{\text{loop}}}(e) = \omega(e)p(e)\ell(e)$. \\

Loops also have very little impact on the intrinsic metrics.  Let $\widetilde{d}_{I,Y}$ denote the intrinsic metric on $\mathcal{X}(\Gamma,\ell,p,\omega)$ and $\widetilde{d}_{I,Z}$ denote the intrinsic metric on $\mathcal{X}_{\text{loop}}(\Gamma,\ell,p,\omega)$.  As before, we have that for $x,y\in \mathcal{X}(\Gamma,\ell)$ and $u,v\in \mathcal{X}_{\text{loop}}(\Gamma,\ell)$,

\begin{align*}
\widetilde{d}_{I,Y}(x,y) &:= \sup\{|f(x)-f(y)|:f\in\mathcal{E}_Y\}, \\
\widetilde{d}_{I,Z}(u,v) &:= \sup\{|g(u)-g(v)|:g\in\mathcal{E}_Z\},
\end{align*}
\\
where

\begin{align*}
\mathcal{E}_Y &:= \{f\in W^{1,2}_{\text{loc}}(\mathcal{X}(\Gamma,\ell,p,\omega),\mu):\text{for all $e\in E$, if $x\in I(e)$, }|f'(x)|^2\leq \omega(e)\}, \\
\mathcal{E}_Z &:= \{f\in W^{1,2}_{\text{loc}}(\mathcal{X}_{\text{loop}}(\Gamma,\ell,p,\omega),\mu):\text{for all $e\in E_\text{loop}$, if $x\in I(e)$, }|f'(x)|^2\leq \omega(e)\}.
\end{align*}

\begin{lem}\label{metricloop}
If $x,y\in\mathcal{X}(\Gamma,\ell)$, then $\widetilde{d}_{I,Y}(x,y) = \widetilde{d}_{I,Z}(x,y)$.
\end{lem}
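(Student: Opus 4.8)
The plan is to establish the two inequalities $\widetilde{d}_{I,Z}(x,y) \leq \widetilde{d}_{I,Y}(x,y)$ and $\widetilde{d}_{I,Y}(x,y) \leq \widetilde{d}_{I,Z}(x,y)$ separately, exploiting the fact that $\mathcal{X}(\Gamma,\ell)$ sits inside $\mathcal{X}_{\text{loop}}(\Gamma,\ell)$ as the union of the non-loop edges, and that the two competing function classes $\mathcal{E}_Y$ and $\mathcal{E}_Z$ differ only in the constraints imposed on the loop edges. Since both intrinsic metrics are expressed as suprema over these classes when restricted to the common vertex/edge set, it suffices to relate the classes themselves.

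For the inequality $\widetilde{d}_{I,Z} \leq \widetilde{d}_{I,Y}$, I would argue by restriction. Given $g \in \mathcal{E}_Z$, its restriction $g\big|_{\mathcal{X}(\Gamma,\ell)}$ lies in $W^{1,2}_{\text{loc}}(\mathcal{X}(\Gamma,\ell,p,\omega),\mu)$ and satisfies $|g'(z)|^2 \leq \omega(e)$ for every non-loop edge $e \in E$ and every $z \in I(e)$, since these constraints are already among the defining conditions for $\mathcal{E}_Z$ (which ranges over all $e \in E_{\text{loop}} \supset E$). Hence $g\big|_{\mathcal{X}(\Gamma,\ell)} \in \mathcal{E}_Y$. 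For $x,y \in \mathcal{X}(\Gamma,\ell)$ we then have $|g(x)-g(y)| \leq \widetilde{d}_{I,Y}(x,y)$, and taking the supremum over all $g \in \mathcal{E}_Z$ yields the claim.

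For the reverse inequality $\widetilde{d}_{I,Y} \leq \widetilde{d}_{I,Z}$, the idea is to extend each $f \in \mathcal{E}_Y$ to a function in $\mathcal{E}_Z$ without changing its values on $\mathcal{X}(\Gamma,\ell)$. The natural extension is to set $g = f$ on $\mathcal{X}(\Gamma,\ell)$ and to let $g$ be \emph{constant}, equal to $f(x)$, along each loop edge $I(x_{\text{loop}})$ incident to a vertex $x$. Since both endpoints of the loop are identified with $x$ and carry the common value $f(x)$, this $g$ is continuous, and on the loop one has $g' \equiv 0$, so the constraint $|g'|^2 \leq \omega(x_{\text{loop}})$ holds trivially; the constraints on the non-loop edges are inherited from $f \in \mathcal{E}_Y$. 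Thus $g \in \mathcal{E}_Z$, and for $x,y \in \mathcal{X}(\Gamma,\ell)$ we obtain $|f(x)-f(y)| = |g(x)-g(y)| \leq \widetilde{d}_{I,Z}(x,y)$; taking the supremum over $f \in \mathcal{E}_Y$ completes the argument, and the two inequalities together give equality.

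The only point requiring care — and it is a mild one — is verifying that the constant extension $g$ genuinely belongs to $\mathcal{E}_Z$, i.e.\ that it is continuous across each vertex and lies in $W^{1,2}_{\text{loc}}$ with respect to the loop measure $p(x_{\text{loop}})\omega(x_{\text{loop}})\,dm$. Continuity is immediate from the matching endpoint values, and a constant function is trivially in $W^{1,2}$ of a finite interval against any such density, so no genuine obstacle arises. I expect the entire proof to be short, with the substantive content being the observation that loop edges impose no additional obstruction to separating points of $\mathcal{X}(\Gamma,\ell)$, since a function can always be flattened to a constant along a loop at zero cost to its gradient.
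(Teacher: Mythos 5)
Your proposal is correct and follows essentially the same route as the paper's proof: restriction of functions in $\mathcal{E}_Z$ to $\mathcal{X}(\Gamma,\ell)$ gives $\widetilde{d}_{I,Z}\leq\widetilde{d}_{I,Y}$, and the constant extension of each $f\in\mathcal{E}_Y$ along every loop edge gives the reverse inequality. Your added verification that the constant extension is continuous and lies in $W^{1,2}_{\text{loc}}$ is a reasonable extra sentence of care that the paper leaves implicit.
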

\begin{proof}
First, the restriction of any $g\in \mathcal{E}_Z$ to $\mathcal{X}(\Gamma,\ell)$ is in $\mathcal{E}_Y$; as an immediate consequence $\widetilde{d}_{I,Z}(x,y) \leq \widetilde{d}_{I,Y}(x,y)$.  On the other hand, given $f\in\mathcal{E}_Y$, one may extend $f$ to $\mathcal{X}_{\text{loop}}(\Gamma,\ell)$ by setting, at each $x\in G$, $f(y)=f(x)$ for each $y\in I(x_{\text{loop}})$.  This extension is in $\mathcal{E}_Z$; consequently $\widetilde{d}_{I,Y}(x,y) \leq \widetilde{d}_{I,Z}(x,y)$, and hence $\widetilde{d}_{I,Y}(x,y) = \widetilde{d}_{I,Z}(x,y)$ for all $x,y\in\mathcal{X}(\Gamma,\ell)$.  
\end{proof}

In particular, $\widetilde{d}_{I,Y}$ and $\widetilde{d}_{I,Z}$ are equal when evaluated between points of $G$.

\section{Synchronizing Brownian motion on metric graphs with the VSRW} \label{S4}

In this section, we begin with a weighted graph $(\Gamma, (\pi(e))_{e\in E})$ and the associated VSRW $(X_t)_{t\geq 0}$.  We investigate the question of when it is possible to find a weighted metric graph $\mathcal{X}(\Gamma,\ell,p,\omega)$ or $\mathcal{X}_{\text{loop}}(\Gamma,\ell,p,\omega)$ such that Brownian motion $(Y_t)_{t\geq 0}$ on this space behaves similarly to the VSRW $(X_t)_{t\geq 0}$ with respect to certain properties. \\

Let $T$ be the hitting time for $Y$ described in Section~\ref{S2}; it is the time it takes $Y$ to hit a vertex different from the last visited one.  Let $\widetilde{T}$ be the jump time of the VSRW.  We write $\mathbb{P}^x$ for the law of $(Y_t)_{t\geq 0}$ started at $x\in G$, and $\widetilde{\mathbb{P}}^x$ for the law of $(X_t)_{t\geq 0}$ started at $x\in G$. \\

Our first result concerns jump probabilities only:

\begin{thm}\label{syncjump}
$X$ and $Y$ have the same jump probabilities if and only if $(\ell(e))_{e\in E}$ and $(p(e))_{e\in E}$ are chosen such that there exists $\lambda>0$ such that for each $e\in E$, $\lambda \pi(e) = p(e)/\ell(e)$. \\
\end{thm}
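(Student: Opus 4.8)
The plan is to compare the jump probabilities of the two processes directly and extract the stated condition as the exact equality case. The jump probabilities of the VSRW are given in the introduction: from a vertex $x$, the walk $X$ jumps to a neighbor $y$ with probability $\pi_{xy}/\pi_x = \pi(e)/\sum_{f\in E(x)}\pi(f)$, where $e=\{x,y\}$. On the other side, Theorem~\ref{BMprob} gives the probability that Brownian motion $Y$ exits the neighborhood $B$ of $x$ through the vertex $x_j$ adjacent via edge $e_j$, namely
\begin{equation*}
\mathbb{P}^{x}(Y_T = x_j) = \frac{q(e_j)/\ell(e_j)}{\sum_{e\in E(x)} q(e)/\ell(e)}.
\end{equation*}
Since we are working on the loopless metric graph $\mathcal{X}(\Gamma,\ell,p,\omega)$ here (the VSRW lives on a loopless graph), we have $q(e)=p(e)$ for every edge, so the relevant weight for $Y$ is $p(e)/\ell(e)$. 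Thus the proof reduces to determining when, for every vertex $x$ and every incident edge $e$,
\begin{equation*}
\frac{\pi(e)}{\sum_{f\in E(x)}\pi(f)} = \frac{p(e)/\ell(e)}{\sum_{f\in E(x)}p(f)/\ell(f)}.
\end{equation*}

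First I would prove the easy direction: if there is a single global constant $\lambda>0$ with $\lambda\pi(e)=p(e)/\ell(e)$ for all $e$, then substituting $p(e)/\ell(e)=\lambda\pi(e)$ into the right-hand side cancels the $\lambda$ in numerator and denominator and recovers $\pi(e)/\sum_{f}\pi(f)$ exactly, so the jump probabilities agree. This is a one-line cancellation. For the converse, I would assume the two jump-probability expressions coincide at every vertex and for every incident edge, and from this deduce the existence of the constant $\lambda$. Fixing a vertex $x$, equality of the ratios for each incident edge $e$ forces $p(e)/\ell(e)$ to be a fixed scalar multiple of $\pi(e)$ across the edges at $x$; that is, there is a vertex-dependent constant $\lambda_x>0$ with $p(e)/\ell(e)=\lambda_x\pi(e)$ for all $e\in E(x)$.

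The main obstacle, and the only point requiring care, is promoting the per-vertex constants $\lambda_x$ to a single global constant $\lambda$. The key observation is that each edge $e=\{x,y\}$ is incident to two vertices, so it is constrained by both $\lambda_x$ and $\lambda_y$: we have $p(e)/\ell(e)=\lambda_x\pi(e)$ and also $p(e)/\ell(e)=\lambda_y\pi(e)$, and since $\pi(e)>0$ this forces $\lambda_x=\lambda_y$. Hence $\lambda$ is constant along every edge, and because $\Gamma$ is connected (assumed at the outset), propagating this equality along paths shows $\lambda_x$ takes a common value $\lambda$ at every vertex. Setting this $\lambda$ completes the converse and yields the required identity $\lambda\pi(e)=p(e)/\ell(e)$ for all $e\in E$. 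I would close by noting the connectedness hypothesis is exactly what is needed here, and that the edge densities $(\omega(e))_{e\in E}$ play no role, consistent with the remark after Theorem~\ref{BMprob} that they only time-change $Y$ and cannot affect exit probabilities.
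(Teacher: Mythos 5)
Your proof is correct and takes essentially the same route as the paper: from Theorem~\ref{BMprob} you extract a vertex-dependent constant $\lambda_x>0$ with $p(e)/\ell(e)=\lambda_x\pi(e)$ on $E(x)$, and then use the fact that each edge $e=\{u,v\}$ is constrained at both endpoints (so $\lambda_u\pi(e)=\lambda_v\pi(e)$, forcing $\lambda_u=\lambda_v$) together with connectedness to globalize $\lambda$, which is exactly the paper's argument. Your explicit one-line cancellation for the sufficiency direction, which the paper leaves implicit, and your remarks on $q(e)=p(e)$ for non-loop edges and on the irrelevance of $(\omega(e))_{e\in E}$ are accurate but do not alter the approach.
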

\begin{proof}
Fix $x\in G$.  Setting $\mathbb{P}^{x}(Y_T= x_j) = \widetilde{\mathbb{P}}^{x}(X_{\widetilde{T}} = x_j)$, we obtain

\begin{equation*}
\frac{\frac{p(e_j)}{\ell(e_j)}}{\sum_{e\in E(x)} \frac{p(e)}{\ell(e)}}=\frac{\pi(e_j)}{\sum_{e\in E(x)} \pi(e)},
\end{equation*}
\\
This implies that there exists $\lambda=\lambda(x)$ such that $\lambda \pi(e) = p(e)/\ell(e)$ for $e\in E(x)$.  Given an edge $e:=\{u,v\}$, by comparing the equalities $\mathbb{P}^{u}(Y_T = x_j) = \widetilde{\mathbb{P}}^{u}(X_{\widetilde{T}} = x_j)$ and $\mathbb{P}^{v}(Y_T = x_j) = \widetilde{\mathbb{P}}^{v}(X_{\widetilde{T}} = x_j)$, we obtain that $\lambda(u)=\lambda(v)$.  Since $G$ is connected, we conclude that $\lambda$ is constant.   
\end{proof}

The situation for synchronizing expected jump times is more complex.  We have the following necessary and sufficient criteria which shows when it is possible to synchronize the jump probabilities and expected jump times using a loopless metric graph: \\

We begin with a definition from graph theory: \\

A {\it disjoint cycle cover} of $\Gamma = (G,E)$ is a collection of vertex-disjoint cycles in $G$ such that every vertex in $G$ is incident to some edge in one of the cycles.  A single edge is considered a cycle.

\begin{thm}\label{sync1}
It is possible to choose $(\ell(e))_{e\in E}$, $(p(e))_{e\in E}$, and $(\omega(e))_{e\in E}$ so that $X$ and $Y$ satisfy, for each $x\in G$,

\begin{align*}
\mathbb{P}^{x}(Y_T = x_j) &= \widetilde{\mathbb{P}}^{x}(X_{\widetilde{T}} = x_j), \\
\mathbb{E}^xT &= \widetilde{\mathbb{E}}^{x}\widetilde{T}.
\end{align*}
\\
if and only, if for every $e\in E$, there exists a disjoint cycle cover of $(G,E)$ which uses the edge $e$. \\
\end{thm}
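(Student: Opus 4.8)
The plan is to separate the probabilistic content, which is already packaged in Theorems~\ref{BMprob} and~\ref{BMjump1}, from the combinatorial content, which is the real heart of the statement. First I would translate the two synchronization requirements into a single condition on the edge weights. By Theorem~\ref{syncjump}, matching jump probabilities forces $p(e)/\ell(e) = \lambda\pi(e)$ for a global constant $\lambda>0$. Feeding this into the first-moment formula of Theorem~\ref{BMjump1} (loopless, so $q=p$ and $E_{\text{loop}}(x)=E(x)$) collapses the denominator to $\lambda\pi_x$, giving
\begin{equation*}
\mathbb{E}^x T = \frac{\sum_{e\in E(x)} \omega(e)p(e)\ell(e)}{\lambda\pi_x} = \frac{\sum_{e\in E(x)} m_{MG}(e)}{\lambda\pi_x}.
\end{equation*}
Since $\widetilde{\mathbb{E}}^x\widetilde T = 1/\pi_x$, matching expected jump times is equivalent to $\sum_{e\in E(x)} m_{MG}(e)=\lambda$ for every $x$. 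Because $m_{MG}(e)=\omega(e)p(e)\ell(e)$ and $\omega(e)>0$ is free once $\ell,p$ obey the jump-probability constraint, the masses $m_{MG}(e)$ range over all positive tuples. Hence synchronization is possible if and only if there is a strictly positive edge weight $w:=m_{MG}/\lambda$ with constant vertex sum $1$, that is, a fractional perfect matching positive on every edge.

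The remaining task is purely combinatorial: such a $w$ exists if and only if every edge lies in a disjoint cycle cover. The natural device is the bipartite double cover $B(\Gamma)$, with vertex classes $\{x^+\}$, $\{x^-\}$ and edges $x^+y^-,\,y^+x^-$ for each $\{x,y\}\in E$. Reading a single edge as a doubled edge (a $2$-cycle) makes disjoint cycle covers of $\Gamma$ correspond exactly to perfect matchings of $B(\Gamma)$, and a fractional perfect matching $w$ on $\Gamma$ lifts to one on $B(\Gamma)$ positive on every edge; the edge $e_0=\{a,b\}$ lies in a cover precisely when a perfect matching of $B(\Gamma)$ uses $a^+b^-$ or $b^+a^-$.

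For the \emph{if} direction I would, for each edge $e$ in a cover $C_e$, form the half-integral fractional perfect matching assigning $1$ to single edges and $\tfrac12$ to cycle edges of $C_e$. A countable convex combination $\sum_i 2^{-i}w_{C_{e_i}}$ (legitimate since local finiteness lets me interchange the two sums at each vertex by Tonelli) is a fractional perfect matching strictly positive on every edge; from it one recovers valid weights by setting, e.g., $\ell\equiv 1$, $\lambda=1$, $p=\pi$, and $\omega(e)=w(e)/\pi(e)$, all positive. The \emph{only if} direction is where the real work lies: given $w$ positive on every edge, I must produce for each fixed $e_0$ a disjoint cycle cover using $e_0$, equivalently a perfect matching of $B(\Gamma)$ through the image of $e_0$. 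For finite $\Gamma$ this is immediate from the integrality of the bipartite perfect-matching polytope (Birkhoff--von Neumann / total unimodularity): the lifted $w$ is a convex combination of integral perfect matchings, and positivity at the image of $e_0$ forces it into the support of one of them. The main obstacle is the countably infinite, locally finite case, where Carathéodory and the finite polytope are unavailable and infinite matching theory is delicate; here I would exhaust $\Gamma$ by finite subgraphs and pass to the limit by a compactness argument (König's lemma / Tychonoff) on perfect matchings of $B(\Gamma)$ that contain the image of $e_0$, using the positivity of $w$ to ensure each finite stage extends. This matching-theoretic limit, and not any of the probabilistic computations, is the crux of the proof.
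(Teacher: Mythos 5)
Your reduction to the combinatorial problem is exactly the paper's: Theorem~\ref{syncjump} forces $p(e)/\ell(e)=\lambda\pi(e)$, Theorem~\ref{BMjump1} then turns the second requirement into finding strictly positive edge weights whose sums at every vertex are constant (the paper substitutes $\omega(e)=c(e)(p(e)\ell(e))^{-1}$ and asks for $\sum_{e\in E(x)}c(e)=1$), and your ``if'' direction --- weight $1$ on isolated edges and $\tfrac12$ on cycle edges of a cover chosen for each edge, combined in a countable positive convex combination --- is the paper's argument verbatim. Where you genuinely diverge is the ``only if'' half of the lemma. The paper notes that the weighted adjacency matrix is doubly stochastic and invokes a ``possibly infinite'' Birkhoff--von Neumann theorem to write it as a convex combination of zero-diagonal permutation matrices, reading a cycle cover off the cycle decomposition of a permutation charging the given edge; you instead pass to the bipartite double cover $B(\Gamma)$, use integrality of the finite bipartite perfect-matching polytope, and propose exhaustion plus compactness for the infinite case. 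Your instinct that the infinite case is the crux is correct, and in fact your route is the more honest one: the representation form of Birkhoff--von Neumann fails for general infinite doubly stochastic matrices, so the paper's one-line appeal is the weakest point of its own proof, and what it really needs is precisely the matching-theoretic statement you isolate. However, your sketch ``positivity of $w$ ensures each finite stage extends'' is not yet an argument: finite subgraphs of an exhaustion need not have perfect matchings at all (boundary deficits), and matchings of finite stages need not extend. The clean completion is to bypass exhaustion of $\Gamma$ and verify Hall's condition directly: with $e_0=\{a,b\}$, consider $H'=B(\Gamma)-a^+-b^-$; for a finite set $S$ on one side of $H'$, the lifted fractional matching sends total weight $|S|$ into $N(S)$, of which at most $1-w(e_0)<1$ can enter the deleted vertex, so $|N_{H'}(S)|>|S|-1$, hence $\geq|S|$. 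Both-sided Hall for the locally finite graph $H'$ then yields a perfect matching (infinite Hall/K\H{o}nig compactness for each side, merged by the standard Cantor--Bernstein alternating-path argument), and adjoining $a^+b^-$ gives a perfect matching of $B(\Gamma)$ through $e_0$. One further caveat, which you share with the paper: a perfect matching of $B(\Gamma)$ corresponds to a zero-diagonal permutation of $G$, whose infinite orbits are double rays rather than cycles, so neither argument literally produces a disjoint cycle cover as defined; this is repaired in one line by replacing each infinite orbit by alternating single edges along the ray, with the alternation on the orbit through $e_0$ chosen to retain $e_0$.
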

\begin{proof}
First, by the previous result, we know that there exists $\lambda>0$ such that $(p(e))_{e\in E}$ and $(\ell(e))_{e\in E}$ satisfy $p(e)/\ell(e) = \lambda \pi(e)$ for each $e\in E$. \\

Fix $x\in G$.  Setting $\mathbb{E}^xT = \widetilde{\mathbb{E}}^{x}\widetilde{T}$, we get that

\begin{equation*}
\frac{\sum_{e\in E(x)} \omega(e)p(e)\ell(e)}{\sum_{e\in E(x)} \frac{p(e)}{\ell(e)}} = \frac{1}{\sum_{e\in E(x)} \pi(e)}.
\end{equation*}
\\
Given that $(p(e))_{e\in E}$ and $(\ell(e))_{e\in E}$ have been chosen, we set $\omega(e) := c(e)(p(e)\ell(e))^{-1}$; we then get that

\begin{equation*}
\frac{\sum_{e\in E(x)} c(e)}{\sum_{e\in E(x)} \pi(e)} = \frac{1}{\sum_{e\in E(x)} \pi(e)}.
\end{equation*}
\\
In other words, the problem is to determine when it is possible to assign edge weights $(c(e))_{e\in E}$ to each edge of the graph so that the edge weights incident to each vertex sum to $1$.  By the following lemma, we see that this happens if and only if, for each $e\in E$, there is a disjoint cycle cover containing $e$ in one of its cycles.
\end{proof}

\begin{lem}
Suppose that $\Gamma=(G,E)$ is a locally finite graph.  It is possible to assign edge weights to each edge of the graph $(G,E)$ so that the edge weights incident to each vertex sum to $1$ if and only if, for each $e\in E$, there is a disjoint cycle cover containing $e$ in one of its cycles. \\
\end{lem}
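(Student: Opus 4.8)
The plan is to recognize the desired assignment as a \emph{positive fractional perfect matching} of $\Gamma$: writing $c(e)$ for the edge weight, the requirement is $c(e)>0$ for all $e\in E$ together with $\sum_{e\in E(x)}c(e)=1$ for every $x\in G$. The combinatorial engine I would use is the bipartite ``double cover'' $H$ of $\Gamma$: take two disjoint copies $V_L,V_R$ of $G$, and for every $\{x,y\}\in E$ include in $H$ the two edges $\{x_L,y_R\}$ and $\{y_L,x_R\}$. A perfect matching $M$ of $H$ matches each $x_L$ to a unique $\sigma(x)_R$ with $\{x,\sigma(x)\}\in E$; since $M$ also saturates each $x_R$, the map $\sigma$ is a bijection of $G$ through graph edges, and its cycle decomposition is exactly a disjoint cycle cover of $\Gamma$ (transpositions of $\sigma$ give single-edge cycles, longer cycles of $\sigma$ give genuine cycles of $\Gamma$). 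Conversely, orienting each cycle of a given disjoint cycle cover produces such a $\sigma$, hence a perfect matching of $H$. Under this correspondence the edge $e_0=\{u_0,v_0\}$ lies in the cover if and only if $M$ contains $\{u_{0,L},v_{0,R}\}$ or $\{v_{0,L},u_{0,R}\}$.

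For the ``only if'' direction I would assume such weights $c$ exist and set $\hat c(\{x_L,y_R\}):=c(\{x,y\})$, obtaining a fractional perfect matching of $H$ with $\hat c>0$ on every edge; in particular $\hat c(\{u_{0,L},v_{0,R}\})=c(e_0)>0$. In the finite case the bipartite fractional matching polytope is the convex hull of the perfect matchings (Birkhoff--von Neumann, equivalently total unimodularity of the bipartite incidence matrix), so $\hat c=\sum_M\lambda_M\mathbf 1_M$ and some $M$ with $\lambda_M>0$ must contain the positively weighted edge $\{u_{0,L},v_{0,R}\}$. The cover associated with that $M$ then uses $e_0$, and ranging over all edges $e_0$ yields the claim.

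For the ``if'' direction I would assume every edge lies in some cover and build the weights by averaging. To each disjoint cycle cover $\mathcal{C}$ assign the nonnegative function $c_{\mathcal{C}}$ equal to $1$ on each single-edge component, $1/2$ on the edges of each longer cycle, and $0$ off $\mathcal{C}$; by construction $\sum_{e\in E(x)}c_{\mathcal{C}}(e)=1$ for every $x$. Enumerating $E=\{e_1,e_2,\dots\}$ and choosing for each $e_k$ a cover $\mathcal{C}_k$ containing it, the countable convex combination $c:=\sum_k 2^{-k}c_{\mathcal{C}_k}$ satisfies $c(e_k)\ge 2^{-k}c_{\mathcal{C}_k}(e_k)>0$ for all $k$; and because $\Gamma$ is locally finite the finite sum over $E(x)$ may be interchanged with the sum over $k$, giving $\sum_{e\in E(x)}c(e)=\sum_k 2^{-k}=1$. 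Thus $c$ is a positive assignment of the required type.

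The main obstacle is the countably infinite case of the ``only if'' direction, where the Birkhoff representation is unavailable. There I would instead show directly that the infinite, locally finite bipartite graph $H$ carries a perfect matching through the prescribed edge $\{u_{0,L},v_{0,R}\}$. A short computation shows that the existence of the positive fractional perfect matching $\hat c$ forces Hall's condition on both sides of $H$ (for finite $S$, summing the constraint over $S$ gives $|N(S)|\ge|S|$), so by the infinite marriage theorem (Hall's condition plus a compactness/König's-lemma argument valid for locally finite bipartite graphs) $H$ has a perfect matching. The delicate point, and where the real work lies, is routing that matching through $\{u_{0,L},v_{0,R}\}$: this amounts to deleting $u_{0,L}$ and $v_{0,R}$ and checking that the slack furnished by $\hat c(e_0)>0$ preserves Hall's condition on the remaining graph, so that the reduced bipartite graph still admits a perfect matching.
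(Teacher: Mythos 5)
Your proposal is correct, and in the direction that matters it takes a genuinely different---and arguably more careful---route than the paper. The ``if'' direction is identical to the paper's: for each edge choose a cover containing it, assign weight $1$ to isolated edges and $1/2$ to edges of proper cycles, and take a positive convex combination $\sum_f \alpha(f)c_f$, which is exactly your $\sum_k 2^{-k}c_{\mathcal{C}_k}$. For the ``only if'' direction the paper views the weights as a (possibly infinite) doubly stochastic matrix and simply invokes ``the Birkhoff--von Neumann theorem'' to write it as a convex combination of permutation matrices, then extracts a permutation with a nonzero entry at the given edge; the cycle decomposition of that permutation is the cover. This is the same underlying combinatorics as your bipartite double cover $H$, whose perfect matchings are precisely such permutations. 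But the paper's casual appeal to an \emph{infinite} Birkhoff--von Neumann theorem is exactly the point you flag: for countably infinite doubly stochastic matrices a Birkhoff-type representation is a delicate matter, not an off-the-shelf theorem. Your substitute---Hall's condition extracted from the fractional perfect matching, plus the marriage theorem for locally finite bipartite graphs, used to route a single perfect matching of $H$ through the prescribed edge---is more defensible in the infinite case, and it is all the lemma actually needs: one permutation through $e_0$, not a full decomposition.

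The one step you leave as ``the real work'' does go through in a few lines, and you should record it. If $S\subseteq V_L\setminus\{u_{0,L}\}$ is finite and tight, i.e.\ $|N(S)|=|S|$, then summing the vertex constraints over $S$ and over $N(S)$ shows that the fractional mass flowing from $S$ into $N(S)$ exhausts the total mass at $N(S)$; hence every positively weighted edge incident to a vertex of $N(S)$ has its other endpoint in $S$. Since $\hat c(\{u_{0,L},v_{0,R}\})=c(e_0)>0$ and $u_{0,L}\notin S$, this forces $v_{0,R}\notin N(S)$. Consequently every finite $S$ with $v_{0,R}\in N(S)$ satisfies $|N(S)|\geq |S|+1$, so Hall's condition survives the deletion of $u_{0,L}$ and $v_{0,R}$ (symmetrically on the right side), the reduced graph has a perfect matching $M'$, and $M'\cup\bigl\{\{u_{0,L},v_{0,R}\}\bigr\}$ is the desired matching of $H$. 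With that paragraph filled in, your proof is complete; in the finite case your Birkhoff-polytope argument coincides with the paper's.
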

\begin{proof}
We reproduce the proof at \cite{MO}.  Suppose that for each $e\in E$, there is a disjoint cycle cover containing $e$ in one of its cycles.  Fix an edge $f$, and pick a disjoint cycle cover containing that edge.  For each $e\in E$, we define $c_f(e) := 1$ if the edge $f$ appears as an isolated edge in the disjoint cycle cover, $c_f(e) := 1/2$ if the edge $e$ is part of a proper cycle in the disjoint cycle cover, and $c_f(e) = 0$ otherwise.  For any $x\in G$, we have $\sum_{e\in E(x)} c_f(e) = 1$, and $c_f(f)>0$, but not necessarily that $c_f(e)>0$ for all edges $e$. \\

Now, let $(\alpha(e))_{e\in E}$ be any collection of positive numbers satisfying $\sum_{e\in E} \alpha(e) = 1$, and set, for each $e\in E$, $c(e) = \sum_{f\in E} \alpha(f)c_f(e)$. \\

Note that since $\alpha(f)c_f(e)>0$, $c(e)>0$, and for any vertex $x\in G$ with neighbors $e_1,\ldots,e_k$, we have that

\begin{align*}
\sum_{e\in E(x)} c(e) &= \sum_{e\in E(x)}\sum_{f\in E}\alpha(f)c_f(e) \\
&= \sum_{f\in E}\sum_{e\in E(x)} \alpha(f)c_{f}(e) \\
&= \sum_{f\in E}\alpha(f) \\
&= 1
\end{align*}
\\
as desired. \\

On the other hand, suppose that for each $x\in G$, the sum of the edge weights incident to $x$ is $1$.  It is clear that the (possibly infinite) weighted adjacency matrix $A$ is doubly stochastic, and by the Birkhoff-von Neumann theorem is a (possibly infinite) convex combination of permutation matrices, which we denote by $(P_n)_{n\in\mathcal{I}}$.  Write

\begin{equation*}
A = \sum_{n\in\mathcal{I}} a_nP_n.
\end{equation*}
\\
with $a_n>0$, $\sum_{n\in\mathcal{I}} a_n = 1$.  Fix $e\in E$.  Then the corresponding entry of $A$ is nonzero, so some $P_j$ must have a nonzero entry in the same place.  $A$ has zero entries on its diagonal (as $(G,E)$ has no loops), so each $P_j$ must also.  By considering the cycle decomposition of the permutation corresponding to the matrix $P_j$, we see that $P_j$ naturally corresponds to a disjoint cycle cover containing the edge $e$.
\end{proof}

{\bf Remarks:}  1. The condition arising in Theorem~\ref{sync1} is very unstable, as it can be destroyed by perturbing a graph very slightly; modifying any graph so that it has at least one vertex of degree $1$ will make it impossible to synchronize the VSRW with a Brownian motion as in Theorem~\ref{sync1}. \\

2. It is not difficult to see that it is always possible to synchronize the jump probabilities and jump times on $\mathcal{X}_{\text{loop}}(\Gamma,\ell,p,\omega)$; one proceeds as above and sets $c(e) := \frac{1}{2}(\pi_{\overline{e}}^{-1/2}\wedge \pi_{\underline{e}}^{-1/2})$ for all non loop edges, so that $\sum_{e\in E(x)} c(e) \leq \frac{1}{2}$ for all $x\in G$.  One then chooses the edge weights on loops so that $\omega(x_{\text{loop}})p(x_{\text{loop}})\ell(x_{\text{loop}}) = 1-\sum_{e\in E(x)} c(e) \leq \frac{1}{2}$ for all $x\in G$.  However, the intrinsic metric for the Brownian motion that one obtains from this process may have undesirable properties. \\

In applications, we may be studying the VSRW on $(\Gamma,\pi)$ with a particular adapted metric in mind.  The following result allows us to construct Brownian motion on a weighted metric graph $\mathcal{X}_{\text{loop}}(\Gamma,\ell,p,\omega)$ such that the VSRW and the Brownian motion have the same jump probabilities and approximately the same expected jump times, and with the additional property that the intrinsic metric for the Brownian motion is closely related to the adapted metric for the VSRW.

\begin{thm} \label{sync2}
Let $(\Gamma,\pi)$ be a weighted graph with adapted edge weights $(\alpha(e))_{e\in E}$.  Let $\mathcal{X}(\Gamma,\ell,p,\omega)$ be the metric graph such that, for $e\in E$,

\begin{align*}
\ell(e) &:= 1, \\
p(e) &:= \pi(e), \\
\omega(e) &:= \alpha^2(e),
\end{align*}
\\
and for each $x\in G$,

\begin{equation*}
\ell(x_{\text{loop}}) := 1, \ p(x_{\text{loop}}) := \frac{1}{2}, \ \omega(x_{\text{loop}}):=1.
\end{equation*}
\\
There exists $C_\alpha>0$ such that for any $x\in G$,

\begin{align}
\mathbb{P}^{x}(Y_T = x_j) &= \widetilde{\mathbb{P}}^{x}(X_{\widetilde{T}} = x_j), \label{jump}\\
\widetilde{\mathbb{E}}^{x}\widetilde{T} &\leq \mathbb{E}^xT \leq  (C_\alpha+1)\widetilde{\mathbb{E}}^{x}\widetilde{T}. \label{moment}
\end{align}
\end{thm}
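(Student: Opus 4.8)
The plan is to verify the two claims in Theorem~\ref{sync2} by directly substituting the prescribed edge data into the formulas already established in Section~\ref{S2}, and then controlling the loop contribution to the expected hitting time using the adaptedness hypothesis. Throughout, recall that for the augmented metric graph one has $q(e) = p(e)$ on non-loop edges and $q(x_{\text{loop}}) = 2p(x_{\text{loop}})$, so with the given choices $q(e) = \pi(e)$ for non-loop edges and $q(x_{\text{loop}}) = 2\cdot\tfrac12 = 1$ for each loop.

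**Step 1: the jump probabilities.**

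First I would establish~\eqref{jump}. This is essentially immediate from Theorem~\ref{BMprob}. Substituting $\ell(e)=1$ and $q(e)=\pi(e)$ into the conclusion of Theorem~\ref{BMprob} gives
\begin{equation*}
\mathbb{P}^{x}(Y_T = x_j) = \frac{q(e_j)/\ell(e_j)}{\sum_{e\in E(x)} q(e)/\ell(e)} = \frac{\pi(e_j)}{\sum_{e\in E(x)} \pi(e)},
\end{equation*}
which is exactly $\widetilde{\mathbb{P}}^{x}(X_{\widetilde{T}}=x_j)$, since the jump probabilities of the VSRW are $\pi(e_j)/\pi_x = \pi(e_j)/\sum_{e\in E(x)}\pi(e)$. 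Equivalently, this matches the chosen data to the criterion of Theorem~\ref{syncjump} with $\lambda = 1$, since $p(e)/\ell(e) = \pi(e) = \lambda\pi(e)$.

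**Step 2: the two-sided bound on the expected hitting time.**

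The substantive part is~\eqref{moment}. I would start from Theorem~\ref{BMjump1}, substituting $\ell(e)=1$, $q(e)=\pi(e)$, $\omega(e)=\alpha^2(e)$ on non-loop edges, and on the loop $\ell(x_{\text{loop}})=1$, $q(x_{\text{loop}})=1$, $\omega(x_{\text{loop}})=1$. Splitting the sum over $E_{\text{loop}}(x)$ in the numerator into non-loop and loop parts yields
\begin{equation*}
\mathbb{E}^xT = \frac{\sum_{e\in E(x)} \alpha^2(e)\pi(e) + 1}{\sum_{e\in E(x)} \pi(e)} = \frac{1}{\pi_x} + \frac{\sum_{e\in E(x)} \pi(e)\alpha^2(e)}{\pi_x}.
\end{equation*}
Since $\widetilde{\mathbb{E}}^{x}\widetilde{T} = 1/\pi_x$ is the mean jump time of the VSRW, the first term is exactly $\widetilde{\mathbb{E}}^{x}\widetilde{T}$, which already gives the lower bound $\widetilde{\mathbb{E}}^{x}\widetilde{T}\leq\mathbb{E}^xT$ (the loop contributes a nonnegative term). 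For the upper bound, the adaptedness of $(\alpha(e))_{e\in E}$ supplies a constant $C_\alpha$ with $\sum_{e\in E(x)}\pi(e)\alpha^2(e)\leq C_\alpha$ for every $x$, so the second term is at most $C_\alpha/\pi_x = C_\alpha\,\widetilde{\mathbb{E}}^{x}\widetilde{T}$, whence $\mathbb{E}^xT \leq (C_\alpha+1)\widetilde{\mathbb{E}}^{x}\widetilde{T}$.

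**Where the work concentrates.**

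No step here is a genuine obstacle; the theorem is a clean corollary of the Section~\ref{S2} computations once the data is plugged in. The only point requiring care is bookkeeping with the loop term: one must correctly track that the loop edge contributes to the numerator sum $\sum_{e\in E_{\text{loop}}(x)}\omega(e)q(e)\ell(e)$ through $E_{\text{loop}}(x)$ but not to the denominator $\sum_{e\in E(x)} q(e)/\ell(e)$, which ranges only over non-loop edges $E(x)$. With the chosen loop data $\omega(x_{\text{loop}})q(x_{\text{loop}})\ell(x_{\text{loop}}) = 1\cdot 1\cdot 1 = 1$, this loop term is precisely the constant $1$ that produces the $+1$ in the bound $(C_\alpha+1)$; the loop's role is exactly to contribute a base jump time matching $1/\pi_x$ and thereby guarantee the lower bound holds. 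The adaptedness hypothesis is what keeps the ratio of the two expected jump times bounded uniformly in $x$, which is the single quantitative input beyond the formulas of Section~\ref{S2}.
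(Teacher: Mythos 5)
Your proposal is correct and follows essentially the same route as the paper: equation \eqref{jump} by matching the data to Theorem~\ref{syncjump} (equivalently, direct substitution into Theorem~\ref{BMprob}), and \eqref{moment} by plugging the prescribed $\ell,q,\omega$ into Theorem~\ref{BMjump1} and using adaptedness to trap the numerator $\sum_{e\in E_{\text{loop}}(x)}\omega(e)q(e)\ell(e)$ between $1$ and $C_\alpha+1$. Your explicit bookkeeping of the loop term (noting $q(x_{\text{loop}})=2p(x_{\text{loop}})=1$, so the loop contributes exactly the $+1$) is in fact slightly more careful than the paper's own display, which writes $p(e)$ where $q(e)$ is meant.
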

\begin{proof}
The relation \eqref{jump} follows immediately from Theorem~\ref{syncjump}.  By adaptedness of $(\alpha(e))_{e\in E}$, there exists a positive constant $C_\alpha$ such that for any $x\in G$,

\begin{equation*}
0 \leq \sum_{e\in E(x)} \pi_{e}\alpha^2(e) \leq C_\alpha,
\end{equation*}
\\
from which it follows immediately that

\begin{equation} \label{SA}
1 \leq \sum_{e\in E_{\text{loop}}(x)} \omega(e)p(e)\ell(e) \leq C_\alpha+1,
\end{equation}
\\
so that \eqref{moment} follows from \eqref{SA} and \eqref{BMjump1}.
\end{proof}
\begin{cor} \label{sync3}
Let $(\Gamma,\pi)$ be a weighted graph, and let $\rho$ be a metric adapted to the VSRW.  Let $\mathcal{X}(\Gamma,\ell,p,\omega)$ be the metric graph such that, for $e\in E$,

\begin{align*}
\ell(e) &:= 1, \\
p(e) &:= \pi(e), \\
\omega(e) &:= \rho^2(\overline{e},\underline{e}).
\end{align*}
\\
and for each $x\in G$,

\begin{equation*}
\ell(x_{\text{loop}}) := 1, \ p(x_{\text{loop}}) := \frac{1}{2}, \ \omega(x_{\text{loop}}):=1.
\end{equation*}
\\
 There exist positive constants $c_\rho, C_\rho$ such that for any $x\in G$,

\begin{align}
\mathbb{P}^{x}(Y_T = x_j) &= \widetilde{\mathbb{P}}^{x}(X_{\widetilde{T}} = x_j),\\
\widetilde{\mathbb{E}}^{x}\widetilde{T} &\leq \mathbb{E}^xT \leq (C_\alpha+1)\widetilde{\mathbb{E}}^{x}\widetilde{T}.
\end{align}
\\
Additionally, if $d_\rho$ is the metric induced by the edge weights $\rho(e) := \rho(\overline{e},\underline{e})$, then for all $x,y\in G$, $\rho(x,y) \leq d_\rho(x,y)$.
\end{cor}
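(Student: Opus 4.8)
The plan is to recognize this corollary as a direct specialization of Theorem~\ref{sync2}, combined with an application of Lemma~\ref{rhodrho}. The only genuinely new observation required is that an adapted \emph{metric} induces adapted \emph{edge weights}, after which the three conclusions reduce entirely to results already established.

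First I would set $\alpha(e) := \rho(\overline{e},\underline{e})$ for each $e\in E$ and check that these edge weights are adapted to the VSRW. Since $\rho$ is assumed adapted, there is a constant $C_\rho>0$ with $\sum_{y\sim x}\pi_{xy}\rho^2(x,y)\leq C_\rho$ for every $x\in G$; rewriting the left-hand side as $\sum_{e\in E(x)}\pi(e)\alpha^2(e)$ shows that $(\alpha(e))_{e\in E}$ satisfies the definition of adapted edge weights with the same constant. This is the key (and essentially only) step, and it is immediate from the definitions.

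With this in hand, I would observe that the weighted metric graph $\mathcal{X}(\Gamma,\ell,p,\omega)$ described in the corollary---with $\ell(e)=1$, $p(e)=\pi(e)$, $\omega(e)=\rho^2(\overline{e},\underline{e})=\alpha^2(e)$, and the stated loop weights---is precisely the metric graph constructed in Theorem~\ref{sync2} applied to the adapted edge weights $(\alpha(e))_{e\in E}$. Consequently the jump probability identity $\mathbb{P}^{x}(Y_T=x_j)=\widetilde{\mathbb{P}}^{x}(X_{\widetilde{T}}=x_j)$ and the two-sided moment bound $\widetilde{\mathbb{E}}^{x}\widetilde{T}\leq\mathbb{E}^xT\leq(C_\alpha+1)\widetilde{\mathbb{E}}^{x}\widetilde{T}$ follow immediately from the conclusions of that theorem, where $C_\alpha$ is the adaptedness constant produced in the previous step.

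Finally, the metric comparison $\rho(x,y)\leq d_\rho(x,y)$ is exactly the content of Lemma~\ref{rhodrho}: since $d_\rho$ is by definition the metric induced by the edge weights $\rho(e):=\rho(\overline{e},\underline{e})$, that lemma applies verbatim, as it requires only that $\rho$ be a metric and not that it be adapted. I expect no real obstacle here, since the corollary is a packaging of Theorem~\ref{sync2} and Lemma~\ref{rhodrho}; the only point needing care is the bookkeeping that identifies the adaptedness constant of the metric $\rho$ with the constant $C_\alpha$ appearing in the moment estimate.
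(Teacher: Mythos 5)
Your proposal is correct and matches the paper's own proof, which simply observes that the corollary ``follows immediately from Lemma~\ref{rhodrho}, Lemma~\ref{metricloop}, and Theorem~\ref{sync2}''; your key step---that an adapted metric $\rho$ induces adapted edge weights $\alpha(e):=\rho(\overline{e},\underline{e})$ with the same constant, so that Theorem~\ref{sync2} applies verbatim---is exactly the intended reduction. The only difference is that you omit Lemma~\ref{metricloop}, which the paper cites to identify $d_\rho$ with the intrinsic metric of the loop-augmented metric graph for later use, but which is not needed for the statement as literally written, since the comparison $\rho(x,y)\leq d_\rho(x,y)$ follows from Lemma~\ref{rhodrho} alone.
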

\begin{proof}
This follows immediately from Lemma~\ref{rhodrho}, Lemma~\ref{metricloop}, and Theorem~\ref{sync2}.
\end{proof}

{\bf Remarks:} 1. Theorem~\ref{sync2} is our first result where use of the augmented graph $\mathcal{X}_{\text{loop}}(\Gamma,\ell,p,\omega)$ is essential.  If one wishes to prove an analogue of this result on the loopless metric graph $\mathcal{X}(\Gamma,\ell,p,\omega)$ while still having control over the intrinsic metric for the associated Brownian motion, then the necessary condition on the edge weights $(\alpha(e))_{e\in E}$ is not adaptedness but rather strong adaptedness.  This is undesirable since strongly adapted edge weights do not exist for every weighted graph (c.f. Example 1 of Section~\ref{S3.4}). \\

2. Note that the choice that $\ell(e) = 1$ for all $e\in E$ is somewhat arbitrary.  Take any $\varphi:E\to (0,+\infty)$.  If one replaces $(\ell(e))_{e\in E}$, $(p(e))_{e\in E}$, $(\omega(e))_{e\in E}$ with $(\ell_\varphi(e))_{e\in E}$, $(p_\varphi(e))_{e\in E}$, $(\omega_\varphi(e))_{e\in E}$ satisfying $\ell_\varphi(e) := \ell(e)\varphi(e)$, $p_\varphi(e) :=\ell(e)(\varphi(e))^{-1}$, $\omega_\varphi(e) := \omega(e)(\varphi(e))^2$, then the Brownian motion on the new weighted graph behaves identically to the Brownian motion on the original weighted graph with respect to hitting probabilities and moments of hitting times. \\

\section{Stochastic completeness of graphs} \label{S5}

In this section we present general results relating volume growth in adapted metrics to stochastic completeness of $(\Gamma,\pi)$, and corollaries specializing to the intrinsic metric $d_I$ and the adapted metric $d_V$.  These results are analogous to Grigor'yan's result on stochastic completeness on manifolds, and will be seen to produce sharp results when applied to specific graphs for which exact criteria for stochastic completeness are known.  Additionally, we present an example showing that in contrast to the setting of local Dirichlet spaces, the intrinsic metric $d_I$ may be a poor metric for analyzing stochastic completeness of graphs.\\

We begin by compiling two results on the convergence of random series which will be used in the proof of our main results.

\subsection{Convergence of random series}

In this section, we let $(\Omega,\mathcal{F},\mathbb{P})$ be a probability space, on which we have the sequence of random variables $(X_n)_{n\in\mathbb{Z}_+}$, and the associated filtration $\mathcal{F}_n:=\sigma(X_0,\ldots,X_n)$.  We define the following events:

\begin{align*}
\mathcal{A} &:= \left\{\sum^\infty_{n=0} X_n < +\infty\right\}, \\
\mathcal{B} &:= \left\{\sum^\infty_{n=0} \mathbb{P}(|X_n|>1|\mathcal{F}_{n-1}) <+\infty\right\}, \\
\mathcal{C} &:= \left\{\sum^\infty_{n=0} \mathbb{E}(X_n{\bf 1}_{\{X_n\leq 1\}}|\mathcal{F}_{n-1}) <+\infty\right\}, \\
\mathcal{D} &:= \left\{\sum^\infty_{n=0} \textup{Var}(X_n{\bf 1}_{\{X_n\leq 1\}}|\mathcal{F}_{n-1}) <+\infty\right\}.
\end{align*}

Two events are said to be equivalent if their symmetric difference has probability $0$.  We have the following two results:

\begin{thm} \label{Doob}
(Doob, \cite{Do})  Suppose that the sequence of random variables $(X_n)_{n\in\mathbb{Z}_+}$ is nonnegative and that there exists $C>0$ such that $\mathbb{P}(0\leq X_n\leq C) = 1$ for all $n\in\mathbb{Z}_+$.  Then the events $\mathcal{A}$ and $\mathcal{C}$ are equivalent.
\end{thm}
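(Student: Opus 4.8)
The plan is to exhibit $\mathcal{A}$ and $\mathcal{C}$ as the convergence sets of two sequences that differ by a martingale with uniformly bounded increments, and then to exploit the dichotomy that such a martingale either converges or oscillates between $+\infty$ and $-\infty$. First I would form the Doob decomposition of the partial sums. Setting
\begin{equation*}
S_n := \sum_{k=0}^{n} X_k, \qquad A_n := \sum_{k=0}^{n} \mathbb{E}(X_k \mid \mathcal{F}_{k-1}), \qquad M_n := S_n - A_n,
\end{equation*}
the process $(M_n)$ is a martingale, $(A_n)$ is nondecreasing with $A_0 \geq 0$, and, because $0 \leq X_n \leq C$ a.s., the increments satisfy $|M_n - M_{n-1}| = |X_n - \mathbb{E}(X_n \mid \mathcal{F}_{n-1})| \leq C$ a.s. By construction $\mathcal{A} = \{S_\infty < +\infty\}$.

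The single nontrivial analytic input I would invoke is the \emph{bounded-increment martingale dichotomy}: for a martingale $(M_n)$ with a.s. bounded increments, the events $\{M_n \text{ converges}\}$, $\{\sup_n M_n < +\infty\}$, and $\{\inf_n M_n > -\infty\}$ coincide off a null set. This follows from Doob's martingale convergence theorem applied to $(M_n)$ stopped at the first passage of a level $a$, together with the symmetric argument for $-M_n$, on letting $a \to \infty$. Granting it, the two inclusions are immediate from nonnegativity of $(X_n)$. On $\{A_\infty < +\infty\}$ we have $M_n = S_n - A_n \geq -A_n \geq -A_\infty > -\infty$ since $S_n \geq 0$, so $\inf_n M_n > -\infty$ and the dichotomy forces $M_n$ to converge; hence $S_n = M_n + A_n$ converges and $\mathcal{A}$ holds. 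Conversely, on $\mathcal{A} = \{S_\infty < +\infty\}$ we have $M_n = S_n - A_n \leq S_n \leq S_\infty$ since $A_n \geq 0$, so $\sup_n M_n < +\infty$ and the dichotomy again forces $M_n$ to converge; hence $A_n = S_n - M_n$ converges and $\{A_\infty < +\infty\}$ holds. This establishes $\mathcal{A} = \{A_\infty < +\infty\}$ up to a null set, i.e.\ the equivalence of $\mathcal{A}$ with finiteness of the (untruncated) compensator.

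It then remains to reconcile $\{A_\infty < +\infty\}$ with the truncated event $\mathcal{C}$, and I expect this to be the main obstacle, as it is precisely where the boundedness hypothesis is indispensable. The point is to compare $\mathbb{E}(X_k \mid \mathcal{F}_{k-1})$ with $\mathbb{E}(X_k \mathbf{1}_{\{X_k \leq 1\}} \mid \mathcal{F}_{k-1})$; the difference is governed by the contribution of the large values $\{X_k > 1\}$, which the uniform bound $X_k \leq C$ is exactly what is needed to control (without such a bound the two convergence events genuinely differ, and the conclusion fails). I would therefore carry out the argument at the level of a truncated sequence bounded by the truncation height, so that the indicator becomes vacuous and the martingale dichotomy applies verbatim, and then use $0 \leq X_n \leq C$ to pass between the truncated and untruncated compensators. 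Everything apart from the dichotomy is routine bookkeeping with nonnegative adapted sequences.
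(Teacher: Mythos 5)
Your core machinery is correct, and in fact the paper offers no proof to compare against: Theorem~\ref{Doob} is quoted from Doob's book \cite{Do} without argument. The route you take --- Doob decomposition $S_n = M_n + A_n$ plus the bounded-increment martingale dichotomy --- is the standard textbook proof, and your execution of it is sound: $M_n$ is a martingale with $|M_n - M_{n-1}| \leq C$ a.s., the stopping argument for the dichotomy is right, and both inclusions give $\mathcal{A} = \{A_\infty < +\infty\}$ up to a null set, i.e.\ the equivalence of $\mathcal{A}$ with finiteness of the \emph{untruncated} compensator $\sum_n \mathbb{E}(X_n \mid \mathcal{F}_{n-1})$.

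The genuine gap is the step you defer as ``routine bookkeeping.'' With the truncation defined by the indicator ${\bf 1}_{\{X_n \leq 1\}}$, the equivalence of $\{A_\infty < +\infty\}$ with $\mathcal{C}$ is not merely delicate when $C > 1$ --- it is false, so no passage between truncated and untruncated compensators can exist. Take $X_n \equiv 2$ (so $C = 2$): then $X_n {\bf 1}_{\{X_n \leq 1\}} \equiv 0$, hence $\mathcal{C} = \Omega$, while $\mathcal{A} = \varnothing$. The indicator truncation discards exactly the information that a large value occurred; recording that information is the role of the series in the event $\mathcal{B}$ in Brown's theorem (Theorem~\ref{Brown}), and it cannot be recovered from the bound $X_n \leq C$ alone. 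What survives in general is the single a.s.\ inclusion $\mathcal{A} \subset \mathcal{C}$: apply your dichotomy argument to $Y_n := X_n {\bf 1}_{\{X_n \leq 1\}}$, which is nonnegative, adapted, and bounded by $1$, to get $\mathcal{C} = \{\sum_n Y_n < +\infty\}$ a.s., and observe $\sum_n Y_n \leq \sum_n X_n$. The statement as printed is therefore correct only in the regime $C \leq 1$, where the indicator is vacuous and your argument applied directly to $(X_n)$ finishes the proof. That is in fact the only way the paper uses it: in the proof of Lemma~\ref{SClem2} the theorem is invoked for $\sigma_n {\bf 1}_{\{\sigma_n \leq 1\}} \leq 1$, and only in the direction ``divergence of the conditional expectations forces divergence of the series,'' which your machinery does deliver. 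So you should either restrict to $C \leq 1$ (equivalently, read the truncation as $X_n \wedge 1$ rather than $X_n {\bf 1}_{\{X_n \leq 1\}}$) or prove only the one inclusion; the promised two-sided reconciliation for general $C$ cannot be carried out.
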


\begin{thm} \label{Brown}
(Brown, \cite{Br})  The event $\mathcal{B}\cap\mathcal{C}\cap\mathcal{D}$ is almost surely a subset of $\mathcal{A}$ (i.e., $\mathbb{P}((\mathcal{B}\cap\mathcal{C}\cap\mathcal{D})\setminus\mathcal{A})=0$).  That is, if each of the series associated with $\mathcal{B}$, $\mathcal{C}$, and $\mathcal{D}$ converge, then the series associated with $\mathcal{A}$ converges also.
\end{thm}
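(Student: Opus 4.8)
The statement is a conditional (martingale) form of the sufficiency half of Kolmogorov's three--series theorem, so the plan is to mirror the classical proof while replacing every almost--sure or expectation statement by its conditional analogue relative to $(\mathcal{F}_n)$. Throughout I would truncate at level $1$, setting $X_n' := X_n\mathbf{1}_{\{X_n\leq 1\}}$, and observe that $\{X_n\neq X_n'\}=\{X_n>1\}\subseteq\{|X_n|>1\}$, so that the three conditional series in $\mathcal{B}$, $\mathcal{C}$, $\mathcal{D}$ are exactly the conditional probability of the truncation error, the conditional mean of the truncated variable, and its conditional variance.

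First I would dispose of the truncation by a conditional (L\'evy) form of the Borel--Cantelli lemma: on the event $\mathcal{B}$, where $\sum_n \mathbb{P}(|X_n|>1\mid\mathcal{F}_{n-1})<\infty$, only finitely many of the events $\{|X_n|>1\}$ occur almost surely, hence $X_n=X_n'$ for all large $n$. Consequently, on $\mathcal{B}$ the series $\sum_n X_n$ and $\sum_n X_n'$ differ by at most finitely many finite terms, so one converges if and only if the other does. This reduces the problem to showing that $\sum_n X_n'$ converges almost surely on $\mathcal{C}\cap\mathcal{D}$.

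Next I would split the truncated series into its predictable compensator and a martingale part,
\[
X_n' = \mathbb{E}(X_n'\mid\mathcal{F}_{n-1}) + \bigl(X_n'-\mathbb{E}(X_n'\mid\mathcal{F}_{n-1})\bigr).
\]
The compensator sum $\sum_n \mathbb{E}(X_n'\mid\mathcal{F}_{n-1})=\sum_n \mathbb{E}(X_n\mathbf{1}_{\{X_n\leq1\}}\mid\mathcal{F}_{n-1})$ converges exactly on $\mathcal{C}$ by definition. The partial sums $M_N := \sum_{n\le N}\bigl(X_n'-\mathbb{E}(X_n'\mid\mathcal{F}_{n-1})\bigr)$ form a martingale whose predictable quadratic variation is $\langle M\rangle_N = \sum_{n\le N}\mathrm{Var}(X_n'\mid\mathcal{F}_{n-1})$, finite in the limit precisely on $\mathcal{D}$. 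The analytic input is that an $L^2$ martingale converges almost surely on the event $\{\langle M\rangle_\infty<\infty\}$; this yields convergence of $M_N$ almost surely on $\mathcal{D}$. Adding the two contributions, $\sum_n X_n'$ converges almost surely on $\mathcal{C}\cap\mathcal{D}$, and combining with the truncation step gives $\mathcal{B}\cap\mathcal{C}\cap\mathcal{D}\subseteq\mathcal{A}$ up to a null set.

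The main obstacle is establishing the two ``on a set'' convergence results, since each asserts almost--sure behaviour only on the random event where the relevant conditional series converges, rather than globally. Both are handled by localization. For the martingale part I would introduce the stopping time $\tau_a := \inf\{N : \langle M\rangle_{N+1} > a\}$, which is a genuine stopping time because $\langle M\rangle$ is predictable (the $\mathcal{F}_{N-1}$--measurability of $\mathrm{Var}(X_N'\mid\mathcal{F}_{N-1})$ makes $\langle M\rangle_{N+1}$ $\mathcal{F}_N$--measurable); since the truncated increments are bounded, the stopped martingale $M_{N\wedge\tau_a}$ has quadratic variation at most $a$ plus a bounded jump, hence is bounded in $L^2$ and converges almost surely by the $L^2$ martingale convergence theorem. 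Letting $a\to\infty$ and using $\{\tau_a=\infty\}=\{\langle M\rangle_\infty\le a\}\uparrow\mathcal{D}$ transfers convergence to all of $\mathcal{D}$. The conditional Borel--Cantelli lemma is proved by the same device applied to the martingale $\sum_{n\le N}\bigl(\mathbf{1}_{\{|X_n|>1\}}-\mathbb{P}(|X_n|>1\mid\mathcal{F}_{n-1})\bigr)$. The care required lies in verifying the uniform $L^2$ bound for each stopped martingale and in controlling the single overshoot term at the stopping time.
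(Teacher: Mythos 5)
The paper does not actually prove this statement---it is quoted directly from Brown \cite{Br}---and your argument is precisely the standard martingale proof of the sufficiency half of the conditional three-series theorem (conditional Borel--Cantelli on $\mathcal{B}$, Doob decomposition of the truncated partial sums into compensator plus martingale, and almost-sure convergence of the stopped $L^2$-bounded martingale on $\{\langle M\rangle_\infty<\infty\}$ via the predictability of $\langle M\rangle$), which is also the method of the cited source, so your proposal matches the intended proof. One small caveat: your assertion that the truncated increments are bounded relies on $|X_n\mathbf{1}_{\{X_n\le 1\}}|\le 1$, which holds only when the $X_n$ are bounded below---as in the paper's sole application, where $X_n=\tau_n\ge 0$, and as assumed explicitly in Theorem~\ref{Doob}---whereas for genuinely signed variables the one-sided truncation is unbounded below and one should truncate at $\{|X_n|\le 1\}$ instead, as in Brown's original formulation.
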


{\bf Remark:} The paper \cite{Br} claims to prove a generalization of the Kolmogorov three-series theorem for dependent random variables.  While \cite{Br} does correctly prove that simultaneous convergence of the series associated with the events $\mathcal{B}$, $\mathcal{C}$, and $\mathcal{D}$ is sufficient for the convergence of the original series (which yields Theorem~\ref{Brown}), the proof of necessity is incorrect, and in \cite{Gi} it is shown that there can be no general three-series theorem of this type. 

\subsection{Proof of the main result}

Our proof uses the following criterion of Sturm which relates stochastic completeness to volume growth on local Dirichlet spaces:

\begin{thm} \label{Sturm}
(Sturm, \cite{St})  Let $(Y_t)_{t\geq 0}$ be the stochastic process associated with the strongly local Dirichlet form $(\mathcal{E},\mathcal{D}(\mathcal{E}))$ on $L^2(X,m)$, and denote the intrinsic metric on this space by $\rho$.  Suppose that there exists $x_0\in X$ and $r_0>0$ such that

\begin{equation*}
\int^\infty_{r_0} \frac{r}{\log m(B_\rho(x_0,r))}dr = + \infty.
\end{equation*}
\\
Then $(Y_t)_{t\geq 0}$ is non-explosive.
\end{thm}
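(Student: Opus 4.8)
The plan is to prove Sturm's criterion by the integrated maximum principle method of Grigor'yan, transported to the strongly local Dirichlet space setting. First I would reduce non-explosiveness to a statement about bounded solutions of an elliptic equation. Writing $P_t$ for the semigroup associated with $(\mathcal{E},\mathcal{D}(\mathcal{E}))$, the process is non-explosive exactly when $P_t$ is conservative, i.e. $w_t := 1 - P_t 1 \equiv 0$. Taking the Laplace transform of $w_t$ and using $\mathcal{L}1 = 0$, one checks that $u_\lambda := 1 - \lambda R_\lambda 1$ (with $R_\lambda$ the resolvent) satisfies $0 \leq u_\lambda \leq 1$ and weakly solves $\mathcal{L}u_\lambda = \lambda u_\lambda$, and that conservativeness is equivalent to $u_\lambda \equiv 0$ for some (hence every) $\lambda > 0$. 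So I would assume the process explodes, fix a resulting bounded nonnegative weak solution $u \not\equiv 0$ of $\mathcal{L}u = \lambda u$, and aim to derive $u \equiv 0$ from the volume hypothesis alone.

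The decisive structural input is the defining property of the intrinsic metric: for a strongly local form the energy measure of a distance function is dominated by the reference measure, $d\Gamma(\rho(x_0,\cdot),\rho(x_0,\cdot)) \leq dm$. I would use this to build, for each radius $R$, a cutoff $\varphi_R$ equal to $1$ on $B_\rho(x_0,R)$, supported in $B_\rho(x_0,2R)$, with energy gradient controlled on the annulus, $d\Gamma(\varphi_R,\varphi_R) \leq (C/R^2)\,dm$. Strong locality is precisely what supplies the Leibniz and chain rules for $\Gamma$ that make $\varphi_R$ admissible and that permit the integration by parts below; note also that $u\varphi_R^2 \in \mathcal{D}(\mathcal{E})$ even though $u$ itself is only bounded (not $L^2$), which is why the cutoff is indispensable.

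The analytic core is an energy identity. Testing the weak equation against $u\varphi_R^2$ and expanding by the Leibniz rule gives
\begin{equation*}
\int \varphi_R^2\,d\Gamma(u,u) + \lambda\int u^2\varphi_R^2\,dm = -2\int u\varphi_R\,d\Gamma(u,\varphi_R).
\end{equation*}
Both terms on the left are nonnegative; bounding the right side by Cauchy--Schwarz, absorbing the factor $\int\varphi_R^2\,d\Gamma(u,u)$, and using $u \leq 1$ together with the cutoff bound yields $\lambda\int_{B_\rho(x_0,R)} u^2\,dm \leq (C/R^2)\,m(B_\rho(x_0,2R))$. This crude estimate already forces $u \equiv 0$ under polynomial volume growth, but it is far too weak for the sharp hypothesis: when $m(B_\rho(x_0,R))$ grows like $e^{cR^2}$ the right side diverges. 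The real work is to replace the plain cutoff by Grigor'yan's exponentially weighted test function (weight of Gaussian type $\exp(-\rho^2/(2s))$) and to run a self-improving iteration over a sequence of radii $R_k$, which produces a differential inequality for a radial energy functional $J(R)$ whose closure consumes exactly the divergence of $\int^\infty r/\log m(B_\rho(x_0,r))\,dr$ and forces $J(R)\to 0$.

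The main obstacle I expect lies in this last step: arranging the quadratic factor $r^2$ carried by the Gaussian exponent to balance against $\log m(B_\rho(x_0,r))$ so that the iteration closes under the \emph{divergence} of the integral rather than any cruder growth bound. This sharp matching between the $r^2$ weight and $\log m$ is the essence of the method and the reason the criterion is optimal. In the Dirichlet-space setting the delicacy is compounded by having only energy measures and the chain rule for $\Gamma$ in place of pointwise gradients, so every differentiation of the weight, every integration by parts, and every application of Cauchy--Schwarz must be justified weakly within $\mathcal{D}(\mathcal{E})$; carrying the whole Gronwall-type argument through at that level of generality, while keeping the constants uniform enough that the volume integral is the only quantity controlling convergence, is the technically demanding part.
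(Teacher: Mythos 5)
This statement is quoted from Sturm \cite{St} and the paper gives no proof of it --- it is imported as a black box for use in Section 5 --- so there is no internal argument to compare against; your proposal must be judged as a reconstruction of Sturm's own proof. The first two stages of your sketch are correct and do match the actual route: the reduction of conservativeness to the vanishing of the bounded nonnegative function $u_\lambda = 1 - \lambda R_\lambda 1$ solving $\mathcal{L}u_\lambda = \lambda u_\lambda$ is standard, and your Caccioppoli step (test against $u\varphi_R^2$, expand by the Leibniz rule for the energy measure, apply Cauchy--Schwarz for $\Gamma$, absorb) is sound and yields $\lambda\int_{B_\rho(x_0,R)} u^2\,dm \leq (C/R^2)\,m(B_\rho(x_0,2R))$, with the correct observation that this only handles roughly quadratic-exponential volume along a subsequence and is far from the sharp criterion.

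The genuine gap is that the decisive step is only named, never performed, and as framed it is doubtful it would close. You defer everything to ``Grigor'yan's exponentially weighted test function'' plus ``a self-improving iteration over a sequence of radii $R_k$'' producing ``a differential inequality for a radial energy functional $J(R)$'' --- but $J$ is never defined, the choice of $R_k$ is never tied to the divergence of $\int^\infty r/\log m(B_\rho(x_0,r))\,dr$, and, more seriously, the Gaussian weight in both Grigor'yan's and Sturm's arguments is genuinely \emph{parabolic}: one estimates weighted integrals $\int u_t^2\, e^{\xi(\cdot,t)}\,dm$ of solutions of the heat equation against a time-dependent weight $\xi(x,t) = -\rho^2(x,x_0)/(2(s-t))$ whose essential property is $\partial_t \xi + \tfrac{1}{2}\Gamma(\xi,\xi) \leq 0$; the free parameter $s$ and the time variable are exactly what allow the $r^2$ in the exponent to trade off against $\log m(B_\rho(x_0,r))$ along the iteration. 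In your purely elliptic formulation ($\mathcal{L}u = \lambda u$ with $\lambda$ fixed) there is no analogue of this mechanism, and static exponential weights are known to yield only the cruder criteria (volume growth $e^{Cr}$, or $e^{Cr^2}$ with more effort), not the sharp integral condition. So you should either return to the parabolic problem and prove $P_t 1 = 1$ directly via the weighted estimates, as Sturm does --- in which case your resolvent reduction becomes unnecessary --- or supply a genuinely new elliptic mechanism; as written, the sketch stops exactly where the theorem begins.
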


In this section, $T$ and $\widetilde{T}$ will denote jump times for $Y$ and $X$ as in previous sections. 

\begin{thm} \label{SC1}
Let $(\Gamma,\pi)$ be a weighted graph, and let  $(\alpha(e))_{e\in E}$ be edge weights adapted to the VSRW on $(\Gamma,\pi)$ such that there exists $D_\alpha>0$ satisfying $\alpha(e)\leq D_\alpha$ for all $e\in E$.  Let $\rho$ be any metric satisfying $\rho(x,y) \leq d_\alpha(x,y)$.  If there exists $x_0\in G$ and $r_0>0$ such that

\begin{equation*}
\int^\infty_{r_0} \frac{r}{\log m_G(B_\rho(x_0,r))}dr = + \infty,
\end{equation*}
\\
then $(\Gamma,\pi)$ is stochastically complete.  
\end{thm}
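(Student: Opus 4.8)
The plan is to transport the problem to a strongly local Dirichlet space, apply Sturm's criterion (Theorem~\ref{Sturm}) there, and then push the resulting non-explosiveness back to the VSRW through a moment comparison of jump times. The first step is to invoke the synchronization of Section~\ref{S4}: I pass to the Brownian motion $Y$ on the metric graph produced by Theorem~\ref{sync2} (equivalently Corollary~\ref{sync3}), whose embedded jump chain coincides with that of $X$ by~\eqref{jump} and whose one-step expected jump times are comparable to those of $X$ by~\eqref{moment}. The argument then splits into (A) deducing from Sturm that $Y$ is non-explosive, and (B) transferring non-explosiveness of $Y$ to $X$.

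For (A) I must convert the hypothesis, phrased for $\rho$ and the counting measure $m_G$, into the volume-growth hypothesis of Sturm's theorem for the intrinsic metric $\widetilde{d}_I$ of $Y$ and its measure $\mu$. Since $\rho\le d_\alpha$, replacing $\rho$ by $d_\alpha$ only shrinks the balls and hence preserves divergence of the integral, so I may work with $d_\alpha$. The construction of Section~\ref{S4} then supplies the two ingredients I need: the intrinsic metric $\widetilde{d}_I$ of $Y$ dominates $\rho$ (intrinsic balls sit inside $\rho$-balls), and adaptedness bounds the total $\mu$-mass of the edges at each vertex by $\sum_{e\in E(x)}\pi(e)\alpha^2(e)\le C_\alpha$, while the uniform bound $\alpha\le D_\alpha$ makes all edge lengths bounded. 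Together these give $\mu(B_{\widetilde{d}_I}(x_0,r))\le C\,m_G(B_\rho(x_0,r+D_\alpha))$, and since the integral $\int^\infty r/\log V\,dr=\infty$ is insensitive to a bounded shift of the radius and a constant factor in $V$, Sturm's theorem applies and $Y$ is non-explosive, i.e. $\sum_n T_n=\infty$ almost surely.

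For (B) I would condition throughout on the common embedded chain $(\xi_n)$. Conditionally, $\widetilde{T}_n$ is exponential of mean $1/\pi_{\xi_n}$, while $T_n$ has conditional mean and variance given explicitly by Theorems~\ref{BMjump1} and~\ref{BMjump2}; the bound~\eqref{moment} makes the conditional means of $T_n$ and $\widetilde{T}_n$ comparable up to the factor $C_\alpha+1$, and the second-moment formula keeps the conditional variances controlled. The goal is to show that, conditionally, the three Kolmogorov-type series $\mathcal{B},\mathcal{C},\mathcal{D}$ for $(T_n)$ converge if and only if those for $(\widetilde{T}_n)$ do, since comparability of conditional means and variances matches the three series term by term up to constants. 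Arguing by contradiction, if $X$ were explosive on a set of positive probability, then Doob's theorem (Theorem~\ref{Doob}, applied to the truncations $T_n\wedge 1$ and $\widetilde{T}_n\wedge 1$ to accommodate unboundedness) would give convergence of the $\mathcal{C}$-series for $(\widetilde{T}_n)$, hence for $(T_n)$, and the moment bounds of Theorem~\ref{BMjump2} would yield convergence of $\mathcal{B}$ and $\mathcal{D}$ for $(T_n)$ as well; Brown's theorem (Theorem~\ref{Brown}) would then force $\sum_n T_n<\infty$ on that set, contradicting (A). Therefore $\sum_n\widetilde{T}_n=\infty$ almost surely and $(\Gamma,\pi)$ is stochastically complete.

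The main obstacle is (B). Whereas $\widetilde{T}_n$ is a genuine exponential, the hitting time $T_n$ is neither exponential nor bounded and the $T_n$ are dependent through the chain, so one cannot pass directly between $\sum_n T_n$ and $\sum_n\mathbb{E}[T_n\mid\mathcal{F}_{n-1}]$; this is exactly where the truncated three-series machinery is needed (Doob in one direction, Brown in the other), and where the explicit variance computation of Theorem~\ref{BMjump2} is indispensable for controlling the variance series $\mathcal{D}$. A secondary, more technical point lies in (A): the Brownian motion used for Sturm must have its intrinsic metric bounded below by $\rho$ while its measure stays comparable to the counting measure, and keeping these two requirements simultaneously compatible is precisely what forces the combined use of adaptedness and the uniform bound $\alpha\le D_\alpha$.
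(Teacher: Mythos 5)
Your proposal follows the paper's skeleton exactly in its first half: build the looped metric graph of Theorem~\ref{sync2}, couple $X$ and $Y$ through the common embedded chain, bound $\mu(B_{\widetilde{d}_I}(x_0,r))$ by a constant times $|B_\rho(x_0,r)|$ using adaptedness (the paper needs no radius shift $r+D_\alpha$, since every edge meeting an intrinsic ball has an endpoint in it, but your shifted version is harmless), and apply Theorem~\ref{Sturm}. Where you genuinely depart from the paper is the organization of step (B). The paper argues forward: from $\sum_n \tau_n=+\infty$ it applies Brown's theorem in contrapositive to force divergence of one of the three truncated series, and then must show each case implies divergence of $\sum_n\mathbb{E}(\tau_n\mid\mathcal{F}^Y_{n-1})$ --- this is exactly where Theorem~\ref{BMjump2} and the hypothesis $\alpha(e)\le D_\alpha$ are consumed, to dominate the variance series by the mean series; it then applies Doob on the $X$ side under the extra assumption $\pi_x\ge C_\pi$ (Lemma~\ref{SClem2}), which it afterwards removes by a separate pendant-vertex augmentation coupling. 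Your contradiction route reverses the order (Doob on $X$ first, Brown on $Y$ second, each in its valid direction) and is structurally leaner: once one knows $\sum_n\mathbb{E}(T_n\mid\mathcal{F}^Y_{n-1})<\infty$ on the explosion event, convergence of all three series for $(T_n)$ follows from elementary one-sided bounds, namely $\mathbb{P}(T_n>1\mid\cdot)\le\mathbb{E}(T_n\mid\cdot)$, $\mathbb{E}(T_n\mathbf{1}_{\{T_n\le1\}}\mid\cdot)\le\mathbb{E}(T_n\mid\cdot)$, and $\mathrm{Var}(T_n\mathbf{1}_{\{T_n\le1\}}\mid\cdot)\le\mathbb{E}(T_n^2\mathbf{1}_{\{T_n\le1\}}\mid\cdot)\le\mathbb{E}(T_n\mid\cdot)$. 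So, contrary to your closing remark, Theorem~\ref{BMjump2} is not ``indispensable'' on your route --- it is not needed at all --- and neither is the paper's augmentation argument; this is a real economy your reversal would buy.

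There is, however, a genuine gap at the pivot of (B). Doob's theorem applied to $\widetilde{T}_n\wedge 1$ yields convergence of $\sum_n\mathbb{E}(\widetilde{T}_n\wedge1\mid\mathcal{F}^X_{n-1})$ on the explosion event, but to invoke \eqref{moment} you need convergence of the \emph{untruncated} series $\sum_n\mathbb{E}(\widetilde{T}_n\mid\mathcal{F}^X_{n-1})=\sum_n 1/\pi_{\xi_{n-1}}$, and your stated justification --- that comparability of conditional means and variances ``matches the three series term by term up to constants'' --- is false for truncated moments: at a vertex with small $\pi_x$ one has $\mathbb{E}(\widetilde{T}_n\wedge1\mid x)\approx 1$, $\mathbb{E}(\widetilde{T}_n\mathbf{1}_{\{\widetilde{T}_n\le1\}}\mid x)\approx\pi_x/2$, and $\mathbb{E}(\widetilde{T}_n\mid x)=1/\pi_x$, so no two of these are comparable. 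This is precisely the small-$\pi_x$ obstruction that forces the paper to assume vertex weights bounded below in Lemma~\ref{SClem2} and then to spend a page removing that assumption; your plan silently walks into the same difficulty. Fortunately, in your architecture it is fixable by an exponential-specific computation you did not state: $\mathbb{E}(\widetilde{T}_n\wedge1\mid x)=(1-e^{-\pi_x})/\pi_x\ge(1-e^{-1})\min(1,1/\pi_x)$, so convergence of the truncated conditional mean series forces $\min(1,1/\pi_{\xi_n})\to0$, hence $\pi_{\xi_n}>1$ for all large $n$, and the tail of the convergent series $\sum_n\min(1,1/\pi_{\xi_n})$ coincides with that of $\sum_n 1/\pi_{\xi_n}$. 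With that bridge inserted, your proof closes (and in fact does not use $\alpha(e)\le D_\alpha$ anywhere in (B), though boundedness of the intrinsic edge lengths still plays a role in making the application of Sturm's theorem in (A) legitimate).
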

\begin{proof}
Let $(X_t)_{t\geq 0}$ denote the VSRW on $(\Gamma,\pi)$, which has the natural filtration $(\mathcal{G}^X_t)_{t\geq 0}$, let $(\sigma_n)_{n\in\mathbb{Z}_+}$ denote the jump times for $X$, and set $H_X(n) := \sum^n_{j=0} \sigma_j$ and $\mathcal{F}^X_n := \mathcal{G}^X_{H_X(n)}$. \\

We begin with the following lemma: \\

\begin{lem}\label{SClem1}
Under the hypotheses of Theorem~\ref{SC1}, $\mathbb{P}-$a.s.,

\begin{equation*}
\sum^\infty_{n=0} \mathbb{E}(\sigma_n|\mathcal{F}^X_{n-1}) = +\infty.
\end{equation*}
\end{lem}
\begin{proof}
By adaptedness, there exists $C_\alpha>0$ such that for all $x\in G$,

\begin{equation*}
\sum_{e\in E(x)} \pi(e)\alpha^2(e) \leq C_\alpha.
\end{equation*}
\\
We will work with the metric graph $\mathcal{X}_{\text{loop}}(\Gamma,\ell,p,\omega)$, where for $e\in E$,

\begin{align*}
\ell(e) &:= 1, \\
p(e) &:= \pi(e), \\
\omega(e) &:= \alpha^2(e),
\end{align*}
\\
and for each $x\in G$,

\begin{equation*}
\ell(x_{\text{loop}}) := 1, p(x_{\text{loop}}) := \frac{1}{2}, \omega(x_{\text{loop}}) := 1.
\end{equation*}
\\
Let $X$ denote the VSRW on $(\Gamma,\pi)$ and let $Y$ denote Brownian motion on $\mathcal{X}_{\text{loop}}(\Gamma,\ell,p,\omega)$.  By Theorem~\ref{sync2}, $X$ and $Y$ have the same jump probabilities.  We begin with the process $(Y_t)_{t\geq 0}$.  By sampling $Y$ each time it hits a vertex different from the last visited vertex, we obtain a discrete time simple random walk on $(\Gamma,\pi)$, which we denote by $(Z_n)_{n\in\mathbb{Z}_+}$, and we use this simple random walk to construct the VSRW $(X_t)_{t\geq 0}$; this is a coupling of $X$ and $Y$; in particular $X$ and $Y$ exist on the same probability space, so there is no need to distinguish between $\mathbb{P}$ and $\widetilde{\mathbb{P}}$ (or $\mathbb{E}$ and $\widetilde{\mathbb{E}}$).  Let $(\sigma_n)_{n\in\mathbb{Z}_+}$ and $(\tau_n)_{n\in\mathbb{Z}_+}$ be the jump times for $X$ and $Y$, respectively; setting $H_X(n) := \inf\{t\geq H_X(n-1):X_t\in G\setminus\{Z_{n-1}\}\}$ and $H_Y(n) := \inf\{t\geq H_Y(n-1):Y_t\in G\setminus\{Z_{n-1}\}\}$, we have that

\begin{align*}
\sigma_n &:= H_X(n)-H_X(n-1), \\
\tau_n &:= H_Y(n)-H_Y(n-1).
\end{align*}
\\
Let $(\mathcal{F}^X_n)_{n\in\mathbb{Z}_+}$ be the filtration such that for each $n\in\mathbb{Z}_+$, $\mathcal{F}^X_n:=\mathcal{G}^X_{H_X(n)}$, where $(\mathcal{G}^X_t)_{t\geq 0}$ is the natural filtration for $X$.  Similarly, let $(\mathcal{F}^Y_n)_{n\in\mathbb{Z}_+}$ be the filtration such that for each $n\in\mathbb{Z}_+$, $\mathcal{F}^Y_n:=\mathcal{G}^Y_{H_Y(n)}$, where $(\mathcal{G}^Y_t)_{t\geq 0}$ is the natural filtration for $Y$. \\

From Theorem~\ref{sync2}, for $n\in\mathbb{Z}_+$, 

\begin{equation} \label{tausigma}
\mathbb{E}(\sigma_n|\mathcal{F}^X_{n-1}) \leq \mathbb{E}(\tau_n|\mathcal{F}^Y_{n-1}) \leq (C_\alpha+1)\mathbb{E}(\sigma_n|\mathcal{F}^X_{n-1}), 
\end{equation}
\\
and for $x,y\in G$, $\rho(x,y) \leq d_\alpha(x,y)$.  By Lemma~\ref{metricloop}, $d_\alpha$ agrees with $\widetilde{d}_I$ when evaluated between points of $G$.  Thus, for all $r\geq 0$, $B_\rho(x_0,r)\supseteq G\cap B_{\widetilde{d}_I}(x_0,r)$.  We have that

\begin{align*}
m_G(B_\rho(x_0,r)) &:= |B_\rho(x_0,r)|, \\
m_{MG}(B_{\widetilde{d}_I}(x_0,r)) &\leq \sum_{e\in E_{\text{loop}}\text{ such that }e\cap B_{\widetilde{d}_I}(x_0,r)\not=\varnothing} m_{MG}(e) \\
&\leq \sum_{x\in G\cap B_{\widetilde{d}_I}(x_0,r)} \sum_{e\in E_{\text{loop}}(x)} m_{MG}(e) \\
&= \sum_{x\in G\cap B_{\widetilde{d}_I}(x_0,r)} \sum_{e\in E_{\text{loop}}(x)} \omega(e)p(e)\ell(e) \\
&\leq \sum_{x\in G\cap B_{\widetilde{d}_I}(x_0,r)} (C_\alpha+1) \\
&\leq (C_\alpha+1)|B_\rho(x_0,r)|.
\end{align*}
\\
In particular, since $(C_\alpha+1)m_G(B_\rho(x_0,r))\geq m_{MG}(B_{\widetilde{d}_I}(x_0,r))$, the hypothesis

\begin{equation*}
\int^\infty_{r_0} \frac{r}{\log m_G(B_\rho(x_0,r))}dr = + \infty
\end{equation*}
\\
implies that

\begin{equation*}
\int^\infty_{r_0} \frac{r}{\log m_{MG}(B_{\widetilde{d}_I}(x_0,r))}dr = + \infty.
\end{equation*}
\\
By Theorem~\ref{Sturm}, this implies non-explosiveness of $Y$, and hence, $\mathbb{P}-$a.s.,

\begin{equation*}
\sum^\infty_{n=0} \tau_n = +\infty.
\end{equation*}
\\
By Theorem~\ref{Brown}, for any $K>0$, $\mathbb{P}-$a.s., at least one of the following equalities holds:

\begin{align}
\sum^\infty_{n=0} \mathbb{P}(\tau_n\geq K|\mathcal{F}^Y_{n-1}) &= +\infty, \label{TS1} \\
\sum^\infty_{n=0} \mathbb{E}(\tau_n{\bf 1}_{\{\tau_n\leq K\}}|\mathcal{F}^Y_{n-1}) &= +\infty, \label{TS2} \\
\sum^\infty_{n=0} \text{Var}(\tau_n{\bf 1}_{\{\tau_n\leq K\}}|\mathcal{F}^Y_{n-1}) &= +\infty. \label{TS3}
\end{align}
\\
Fix $K>0$.  We will show that any of \eqref{TS1}, \eqref{TS2}, \eqref{TS3} implies

\begin{equation}
\sum^\infty_{n=0} \mathbb{E}(\tau_n|\mathcal{F}^Y_{n-1}) = +\infty. \label{Esigma}
\end{equation}
\\
If \eqref{TS1} holds, by Markov's inequality,
\begin{align*}
\sum^\infty_{n=0} \mathbb{P}(\tau_n\geq K|\mathcal{F}^Y_{n-1}) &\leq \frac{1}{K}\sum^\infty_{n=0} \mathbb{E}(\tau_n{\bf 1}_{\{\tau_n \geq K\}}|\mathcal{F}^Y_{n-1}) \\
&\leq \frac{1}{K}\sum^\infty_{n=0} \mathbb{E}(\tau_n|\mathcal{F}^Y_{n-1}).
\end{align*}
\\
If \eqref{TS2} holds, we use the trivial estimate

\begin{equation*}
\sum^\infty_{n=0} \mathbb{E}(\tau_n{\bf 1}_{\{\tau_n\leq K\}}|\mathcal{F}^Y_{n-1}) \leq \sum^\infty_{n=0} \mathbb{E}(\tau_n|\mathcal{F}^Y_{n-1}).
\end{equation*}
\\
If \eqref{TS3} holds, we begin by noting that

\begin{align*}
\text{Var}(\tau_n|\mathcal{F}^Y_{n-1})-\text{Var}(\tau_n{\bf 1}_{\{\tau_n\leq K\}}|\mathcal{F}^Y_{n-1}) = {} &\mathbb{E}(\tau^2_n|\mathcal{F}^Y_{n-1})-\mathbb{E}(\tau^2_n{\bf 1}_{\{\tau_n\leq K\}}|\mathcal{F}^Y_{n-1}) \\
& -(\mathbb{E}(\tau_n|\mathcal{F}^Y_{n-1}))^2+(\mathbb{E}(\tau_n{\bf 1}_{\{\tau_n\leq K\}}|\mathcal{F}^Y_{n-1}))^2 \\
= {} &\mathbb{E}(\tau_n^2{\bf 1}_{\{\tau_n>K\}}|\mathcal{F}^Y_{n-1})+(\mathbb{E}(\tau_n{\bf 1}_{\{\tau_n\leq K\}}|\mathcal{F}^Y_{n-1}))^2 \\
& -(\mathbb{E}(\tau_n|\mathcal{F}^Y_{n-1}))^2 \\
\geq {} &(\mathbb{E}(\tau_n{\bf 1}_{\{\tau_n>K\}}|\mathcal{F}^Y_{n-1}))^2+(\mathbb{E}(\tau_n{\bf 1}_{\{\tau_n\leq K\}}|\mathcal{F}^Y_{n-1}))^2 \\
& -(\mathbb{E}(\tau_n|\mathcal{F}^Y_{n-1}))^2 \\
\geq {} &-\frac{1}{2}(\mathbb{E}(\tau_n|\mathcal{F}^Y_{n-1}))^2.
\end{align*}
\\
Hence

\begin{equation*}
\text{Var}(\tau_n|\mathcal{F}^Y_{n-1})+\frac{1}{2}(\mathbb{E}(\tau_n|\mathcal{F}^Y_{n-1}))^2 \geq \text{Var}(\tau_n{\bf 1}_{\{\tau_n\leq K\}}|\mathcal{F}^Y_{n-1}).
\end{equation*}
\\
We conclude that \eqref{TS3} implies that one of the following equalities holds:

\begin{align}
\sum^\infty_{n=0} \text{Var}(\tau_n|\mathcal{F}^Y_{n-1}) &= +\infty, \label{Varsigma} \\
\sum^\infty_{n=0} (\mathbb{E}(\tau_n|\mathcal{F}^Y_{n-1}))^2 &= +\infty. \label{E2sigma}
\end{align}
\\
Suppose that \eqref{Varsigma} holds.  Note that if $\tau_n$ is the jump time for $(Y_t)_{t\geq 0}$ started at the vertex $x$, then by Theorem~\ref{BMjump2},

\begin{align*}
\text{Var}(\tau_n|Y_{H_Y(n-1)}=x) = {}& \frac{1}{3}\frac{\sum_{e\in E_{\text{loop}}(x)} \omega^2(e)q(e)\ell^3(e)}{\sum_{e\in E(x)} \frac{q(e)}{\ell(e)}}+\frac{1}{3}\left(\frac{\sum_{e\in E_{\text{loop}}(x)}  \omega(e)q(e)\ell(e)}{\sum_{e\in E(x)} \frac{q(e)}{\ell(e)}}\right)^2 \\
& + 2\omega(x_{\text{loop}})q(x_{\text{loop}})\ell(x_{\text{loop}})\frac{\sum_{e\in E_{\text{loop}}(x)}  \omega(e)q(e)\ell(e)}{\left(\sum_{e\in E(x)} \frac{q(e)}{\ell(e)}\right)^2} \\
\leq {}& \frac{1}{3}\frac{\sum_{e\in E_{\text{loop}}(x)} \omega^2(e)q(e)\ell^3(e)}{\sum_{e\in E(x)} \frac{q(e)}{\ell(e)}}+\frac{7}{3}(\mathbb{E}(\tau_n|Y_{H_Y(n-1)}=x))^2 \\
\leq {}& \frac{1}{3\pi_x}\left(D^2_\alpha\sum_{e\in E(x)} \omega(e)p(e)\ell(e)+1\right)+\frac{7}{3}(\mathbb{E}(\tau_n|Y_{H_Y(n-1)}=x))^2 \\
\leq {}&\frac{1}{3}(D^2_\alpha\vee 1)\mathbb{E}(\tau_n|Y_{H_Y(n-1)}=x)+\frac{7}{3}(\mathbb{E}(\tau_n|Y_{H_Y(n-1)}=x))^2,
\end{align*}
\\
and hence

\begin{equation*}
\text{Var}(\tau_n|\mathcal{F}^Y_{n-1}) \leq \frac{1}{3}(D^2_\alpha\vee 1)\mathbb{E}(\tau_n|\mathcal{F}^Y_{n-1})+\frac{7}{3}(\mathbb{E}(\tau_n|\mathcal{F}^Y_{n-1}))^2.
\end{equation*}
\\
From this estimate, it is clear that if \eqref{Varsigma} holds, then either \eqref{Esigma} holds, in which case we are done, or \eqref{E2sigma} holds.  In the latter case, either the positive sequence $(\mathbb{E}(\tau_n|\mathcal{F}^Y_{n-1}))_{n\in\mathbb{Z}_+}$ converges to $0$, in which case eventually $\mathbb{E}(\tau_n|\mathcal{F}^Y_{n-1})\geq (\mathbb{E}(\tau_n|\mathcal{F}^Y_{n-1}))^2$ (implying that that \eqref{Esigma} holds), or it does not converge to $0$, in which case it is clear that \eqref{Esigma} holds also.  Thus, we conclude that \eqref{TS3} implies \eqref{Esigma}. \\

We conclude that $\sum^\infty_{n=0} \tau_n = +\infty$ $\mathbb{P}-$a.s. implies $\sum^\infty_{n=0} \mathbb{E}(\tau_n|\mathcal{F}^Y_{n-1}) = +\infty$ $\mathbb{P}-$a.s.  By \eqref{tausigma}, this implies $\sum^\infty_{n=0} \mathbb{E}(\sigma_n|\mathcal{F}^X_{n-1}) = +\infty$ $\mathbb{P}-$a.s. also.
\end{proof}

At this point, we assume that the vertex weights $(\pi_x)_{x\in G}$ are bounded below by $C_\pi>0$.  We will subsequently discharge this assumption by a probabilistic argument. \\

\begin{lem}\label{SClem2}
Under the hypotheses of Theorem~\ref{SC1} and the additional hypothesis that the vertex weights $(\pi_x)_{x\in G}$ are bounded below by $C_\pi>0$, $(\Gamma,\pi)$ is stochastically complete.
\end{lem}
\begin{proof}
By Lemma~\ref{SClem1}, we have that $\mathbb{P}$-a.s.,

\begin{equation*}
\sum^\infty_{n=0} \mathbb{E}(\sigma_n|\mathcal{F}^X_{n-1}) = +\infty.
\end{equation*}
\\
Note that $\mathbb{E}(\sigma_n|X_{H_X(n-1)}=x)=\frac{1}{\pi_x}$ and that the jump time from the vertex $x$ is exponential with parameter $\pi_x$.  Clearly, $\pi_x \geq C_\pi>0$.  Then we have that

\begin{align*}
\mathbb{E}(\sigma_n{\bf 1}_{\{\sigma_n\leq 1\}}|Y_{H_Y(n-1)}=x) &= \int^1_0 \pi_xue^{-\pi_{x}u}du \\
&= \frac{1}{\pi_{x}}(1-(\pi_{x}+1)e^{-\pi_{x}}) \\
&\geq \frac{1}{\pi_{x}}(1-(C_\pi+1)e^{-C_\pi}) \\
&\geq \frac{C'_\pi}{\pi_{x}} \\
&= C'_\pi \mathbb{E}(\sigma_n|X_{H_X(n-1)}=x).
\end{align*}
\\
Note that $C'_\pi := 1-(C_\pi+1)e^{-C_\pi} = \int^{C_\pi}_0 ve^{-v}dv > 0$.  It follows that for $n\in\mathbb{Z}_+$,

\begin{equation*}
\mathbb{E}(\sigma_n{\bf 1}_{\{\sigma_n\leq 1\}}|\mathcal{F}^X_{n-1}) \geq C'_\pi \mathbb{E}(\sigma_n|\mathcal{F}^X_{n-1}),
\end{equation*}
\\
and hence $\mathbb{P}-$a.s., $\sum^\infty_{n=0} \mathbb{E}(\sigma_n{\bf 1}_{\{\sigma_n\leq 1\}}|\mathcal{F}^X_{n-1}) = +\infty$.  Since $(\sigma_n{\bf 1}_{\{\sigma_n\leq 1\}})_{n\in\mathbb{Z}_+}$ is a uniformly bounded sequence of nonnegative random variables, Theorem~\ref{Doob} implies that $\mathbb{P}-$a.s.,

\begin{equation*}
\sum^\infty_{n=0} \sigma_n{\bf 1}_{\{\sigma_n\leq 1\}} = +\infty.
\end{equation*}

Since $0\leq \sigma_n{\bf 1}_{\{\sigma_n\leq 1\}} \leq \sigma_n$ pointwise, we conclude that $\mathbb{P}-$a.s.,

\begin{equation*}
\sum^\infty_{n=0} \sigma_n = +\infty,
\end{equation*}
\\
and hence $(\Gamma,\pi)$ is stochastically complete.
\end{proof}
Finally, we remove the hypothesis that the vertex weights are bounded below.  Suppose that $(\Gamma,\pi)$ is a stochastically incomplete graph which satisfies the hypotheses of Theorem~\ref{SC1}, but has vertex weights which are not bounded below. \\

Let $H := \{x\in G:\pi_x\leq 1\}$.  Consider the augmented graph $(\widetilde{\Gamma},\widetilde{\pi})$, where $\widetilde{\Gamma} = (\widetilde{G},\widetilde{E})$ is obtained from $(G,E)$ by adding, for each $x\in H$, a vertex $\widetilde{x}$ connected to the rest of the graph only by the edge $\{x,\widetilde{x}\}$; denote these additional vertices by $\widetilde{H}$.  The edges of the form $\{x,\widetilde{x}\}$ are given weight $1$. \\

Let $(\widetilde{X}_t)_{t\geq 0}$ denote the VSRW on $(\widetilde{\Gamma},\widetilde{\pi})$.  We use this to construct the VSRW on $(\Gamma,\pi)$ via a coupling, as follows:  At each vertex in $G\setminus H$, $\widetilde{X}$ and $X$ move identically.  At a vertex $x\in H$, $\widetilde{X}$ eventually jumps to some vertex in $\widetilde{G}\setminus\{x,\widetilde{x}\}$.  $X$ jumps to the same vertex after waiting an exponential time with parameter $\pi_x$. \\

We define $\widetilde{\rho}$ on $(\widetilde{\Gamma},\widetilde{\pi})$ as follows.  Given $x,y\in G$ with $x\not=y$, we set

\begin{align*}
\widetilde{\rho}(x,y) &= \rho(x,y), \\
\widetilde{\rho}(x,\widetilde{y}) &= \rho(x,y)+1, \\
\widetilde{\rho}(\widetilde{x},\widetilde{y}) &= \rho(x,y)+2.
\end{align*}
\\
Since $\rho$ was adapted and $\pi_{x\widetilde{x}}\widetilde{\rho}^2(x,\widetilde{x}) = 1$ for all $x\in H$, $\widetilde{\rho}$ is an adapted metric on $(\widetilde{\Gamma},\widetilde{\pi})$ with constant at most $C_\rho+1$.  \\

For any $x_0\in G$ and $r\geq 0$, we have that 

\begin{equation*}|B_\rho(x_0,r)| \leq |B_{\widetilde{\rho}}(x_0,r)| \leq |B_\rho(x_0,r)|+|\{\widetilde{x}\in \widetilde{H}:d_\rho(x_0,x)\leq r\}| \leq 2|B_\rho(x_0,r)|.
\end{equation*}
\\
Consequently, $(\widetilde{\Gamma},\widetilde{\pi})$ satisfies the hypotheses of Lemma~\ref{SClem2}, so $(\widetilde{\Gamma},\widetilde{\pi})$ is stochastically complete and $\widetilde{X}$ is non-explosive. \\

By hypothesis, $X$ explodes with positive probability, and we denote the event on which this occurs by $SI_X$.  Given $U\subset G$ and $V\subset \widetilde{G}$, $T^X_U$ is the total amount of time $X$ spends at vertices of $U$, and similarly $T^{\widetilde{X}}_V$ is the total amount of time $\widetilde{X}$ spends at vertices of $V$. \\

On $SI_X$, we have that

\begin{equation*}
T^X_{G\setminus H} + T^X_H = T^X_G < \infty.
\end{equation*}
\\
Clearly $T^X_{G\setminus H} < \infty$ on $SI_X$.  Since $X$ and $\widetilde{X}$ move identically on $G\setminus H$, this implies that $T^{\widetilde{X}}_{G\setminus H} < \infty$ on $SI_X$. \\

Next, we use the fact that $T^X_H < \infty$ on $SI_X$.  Since the times between jumps for the VSRW are independent of the jump directions, the times between jumps from vertices in $H$ are independent exponentially distributed random variables with mean at least $1$.  By the second Borel-Cantelli Lemma, an infinite number of visits to $H$ would cause $X$ to spend an infinite amount of time at vertices of $H$, and have infinite lifetime, which would be impossible.  So on $SI_X$, $X$ visits $H$ only finitely many times. \\

Using the coupling, each visit to a vertex $x\in H$ by $X$ is associated with an initial visit to $x$ by $\widetilde{X}$, as well as a geometrically distributed (finite) number of additional visits to $x$ and $\widetilde{x}$ by $\widetilde{X}$ (before $\widetilde{X}$ visits $\widetilde{G}\setminus\{x,\widetilde{x}\}$ again).  The times between all of these jumps for $\widetilde{X}$ are independent exponentially distributed random variables with mean at most $1$; consequently, their sum is finite.  Since $\widetilde{X}$ only visits $H$ and $\widetilde{H}$ finitely many times on $SI_X$, we conclude that on $SI_X$, $T^{\widetilde{X}}_H<\infty$ and $T^{\widetilde{X}}_{\widetilde{H}}<\infty$. \\

Thus, on $SI_X$,

\begin{equation*}
T^{\widetilde{X}}_{\widetilde{G}} = T^{\widetilde{X}}_{G\setminus H}+T^{\widetilde{X}}_{H}+T^{\widetilde{X}}_{\widetilde{H}} < \infty.
\end{equation*}
\\
Consequently, $\widetilde{X}$ explodes with positive probability, contradicting the stochastic completeness of $(\widetilde{\Gamma},\widetilde{\pi})$. \\

Combining this argument with Lemma~\ref{SClem2}, we conclude that under the hypotheses of Theorem~\ref{SC1}, $(\Gamma,\pi)$ is stochastically complete.
\end{proof}
\begin{cor}\label{cor1}
Let $(\Gamma,\pi)$ be a weighted graph, and let $\rho$ be a metric adapted to the VSRW on $(\Gamma,\pi)$ such that there exists $D_\rho>0$ satisfying $\rho(x,y)\leq D_\rho$ whenever $x\sim y$.  If there exists $x_0\in G$ and $r_0>0$ such that

\begin{equation*}
\int^\infty_{r_0} \frac{r}{\log|B_\rho(x_0,r)|}dr = + \infty,
\end{equation*}
\\
then $(\Gamma,\pi)$ is stochastically complete.
\end{cor}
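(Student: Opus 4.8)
The plan is to deduce this directly from Theorem~\ref{SC1} by passing from the adapted metric $\rho$ to its induced edge weights. First I would set $\alpha(e) := \rho(\overline{e},\underline{e})$ for each $e\in E$; these are precisely the edge weights induced by the metric $\rho$ as discussed in Section~\ref{S3.2}. With this choice the three structural hypotheses of Theorem~\ref{SC1} should fall out one at a time from the two hypotheses on $\rho$, after which the volume-growth integrals match automatically.

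For adaptedness of $(\alpha(e))_{e\in E}$ I would simply observe that
\begin{equation*}
\sum_{e\in E(x)} \pi(e)\alpha^2(e) = \sum_{y\sim x} \pi_{xy}\rho^2(x,y) \leq C_\rho
\end{equation*}
for every $x\in G$, where $C_\rho$ is the adaptedness constant for $\rho$; thus $(\alpha(e))_{e\in E}$ is adapted with the same constant. For the uniform upper bound, the hypothesis $\rho(x,y)\leq D_\rho$ whenever $x\sim y$ gives $\alpha(e) = \rho(\overline{e},\underline{e}) \leq D_\rho$ for every $e\in E$, so I may take $D_\alpha := D_\rho$. For the comparison $\rho \leq d_\alpha$, I would invoke Lemma~\ref{rhodrho}: since $d_\alpha$ is by definition the metric $d_\rho$ induced by the weights $\rho(e) := \rho(\overline{e},\underline{e})$, that lemma yields $\rho(x,y) \leq d_\rho(x,y) = d_\alpha(x,y)$ for all $x,y\in G$, which is exactly the comparison required by Theorem~\ref{SC1}.

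It then remains only to align the two integral conditions. Recalling from Section~\ref{S3.5} that the invariant measure for the VSRW is counting measure, so that $m_G(B_\rho(x_0,r)) = |B_\rho(x_0,r)|$, the hypothesis
\begin{equation*}
\int^\infty_{r_0} \frac{r}{\log|B_\rho(x_0,r)|}\,dr = +\infty
\end{equation*}
becomes verbatim the integral hypothesis of Theorem~\ref{SC1}. With all hypotheses verified, Theorem~\ref{SC1} gives stochastic completeness of $(\Gamma,\pi)$. I do not expect a genuine obstacle here: this is a routine specialization, and the only point requiring any care is the bookkeeping identification $d_\alpha = d_\rho$, which ensures Lemma~\ref{rhodrho} applies directly; everything else is immediate from the definitions.
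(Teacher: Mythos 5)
Your proposal is correct and follows essentially the same route as the paper, which proves Corollary~\ref{cor1} by noting that the induced edge conductances $\rho(e):=\rho(\overline{e},\underline{e})$ are adapted and uniformly bounded above and then invoking Theorem~\ref{SC1} together with the comparison $\rho\leq d_\rho$ (packaged in the paper via Corollary~\ref{sync3}, but ultimately just Lemma~\ref{rhodrho}, which you cite directly). Your more explicit verification of the hypotheses, including the identification $m_G(B_\rho(x_0,r))=|B_\rho(x_0,r)|$, is exactly the bookkeeping the paper leaves implicit.
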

\begin{proof}
Upon noting that the edge conductances $\rho(e) := \rho(\overline{e},\underline{e})$ are adapted and uniformly bounded above, this follows from Corollary~\ref{sync3} and Corollary~\ref{SC1}.
\end{proof}
\begin{cor}\label{cor2}
Let $(\Gamma,\pi)$ be a weighted graph with uniformly bounded vertex degrees and edge weights uniformly bounded below.  If there exists $x_0\in G$ and $r_0>0$ such that

\begin{equation*}
\int^\infty_{r_0} \frac{r}{\log |B_{d_I}(x_0,r)|}dr = + \infty,
\end{equation*}
\\
then $(\Gamma,\pi)$ is stochastically complete. \\
\end{cor}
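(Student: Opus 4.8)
The plan is to deduce this from Corollary~\ref{cor1} by showing that, under the stated hypotheses, the intrinsic metric $d_I$ is itself an adapted metric for the VSRW whose edge lengths are uniformly bounded. Once these two properties are verified, one applies Corollary~\ref{cor1} with $\rho = d_I$, and the volume growth hypothesis immediately yields stochastic completeness.

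The crux is an edgewise estimate on $d_I$. Recall from the proof of Lemma~\ref{Xcomp1} that every $f \in \mathcal{E}_X$ satisfies $|f(x)-f(y)| \le 2\pi_{xy}^{-1/2}$ whenever $x \sim y$, so that taking the supremum over $\mathcal{E}_X$ gives
\begin{equation*}
d_I(x,y) \le 2\pi_{xy}^{-1/2}, \qquad x \sim y.
\end{equation*}
First I would combine this with the assumption that vertex degrees are bounded by some $C_G$ to verify adaptedness: for each $x \in G$,
\begin{equation*}
\sum_{y \sim x} \pi_{xy}\, d_I^2(x,y) \le 4 \sum_{y \sim x} \pi_{xy}\pi_{xy}^{-1} = 4\deg(x) \le 4C_G,
\end{equation*}
so $d_I$ is adapted with constant $4C_G$. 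This is precisely the step where bounded vertex degree enters, and it is what rescues $d_I$ from the failure of adaptedness exhibited for unbounded-degree trees in Example 2 of Section~\ref{S3.4}.

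Next I would use the assumption that the edge weights are bounded below, say $\pi_{xy} \ge c_\pi > 0$, to obtain the uniform bound on edge lengths: the same edgewise estimate gives $d_I(x,y) \le 2c_\pi^{-1/2}$ whenever $x \sim y$, which is the second hypothesis of Corollary~\ref{cor1} with $D_\rho = 2c_\pi^{-1/2}$. Before invoking that corollary one should also confirm that $d_I$ is a genuine finite metric so that it may legitimately play the role of $\rho$: finiteness follows because the edgewise bound, summed along a geodesic of a connected graph, forces $d_I(x,y) \le 2c_\pi^{-1/2}\, d(x,y) < \infty$, while positivity on distinct points follows from the lower bound $2^{1/2} d_V \le d_I$ of Lemma~\ref{Xcomp2} together with the fact that any path joining distinct vertices has strictly positive $d_V$-length.

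With $d_I$ now known to be an adapted metric with uniformly bounded edge lengths, Corollary~\ref{cor1} applies verbatim with $\rho = d_I$, and the hypothesis $\int_{r_0}^\infty r/\log|B_{d_I}(x_0,r)|\,dr = +\infty$ gives the conclusion. The only real content is the single-edge estimate on $d_I$; there is no genuine analytic obstacle here, and the main point to be careful about is simply checking that $d_I$ qualifies as a metric, that is, that it is finite and point-separating, so that the reduction to Corollary~\ref{cor1} is legitimate.
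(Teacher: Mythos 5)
Your proof is correct, and it is a mild variant of the paper's one-line argument rather than a fundamentally different method. The paper sets $\alpha(e):=\pi(e)^{-1/2}$, notes via Lemma~\ref{SA1} that bounded vertex degree makes these edge weights adapted and via the lower bound on the edge weights that they are uniformly bounded above, and then feeds the comparison $d_I\leq 2d_E=2d_\alpha$ of Lemma~\ref{Xcomp1} into Theorem~\ref{SC1} with $\rho=d_I$; in that formulation $\rho$ itself never needs to be adapted, only dominated by $d_\alpha$. You instead promote the edgewise estimate $d_I(x,y)\leq 2\pi_{xy}^{-1/2}$ (which is exactly the first line of the proof of Lemma~\ref{Xcomp1}) into the statement that $d_I$ is itself an adapted metric with uniformly bounded edge lengths, and invoke Corollary~\ref{cor1} with $\rho=d_I$. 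The two routes consume the hypotheses identically --- bounded degree for adaptedness, weights bounded below for the bound on edge lengths --- and both ultimately pass through Theorem~\ref{SC1}, so this is the same reduction packaged differently. What your version buys is a slightly stronger intermediate fact that is interesting in light of Example 2 of Section~\ref{S3.4}: under these hypotheses $d_I$ is genuinely adapted (with constant $4C_G$), whereas for unbounded degree it can fail to be adapted at all. You are also more careful on two small points the paper leaves implicit: you verify that $d_I$ is a finite, point-separating metric (via $2^{1/2}d_V\leq d_I\leq 2c_\pi^{-1/2}\,d$, using Lemma~\ref{Xcomp2} and local finiteness) before letting it play the role of $\rho$, and your constants are consistent, whereas the paper's proof cites Lemma~\ref{Xcomp1} as ``$d_I\leq d_E$,'' dropping the factor of $2$ --- harmless, since one may rescale $\alpha$ by $2$ without affecting adaptedness or boundedness, but your bookkeeping avoids even that blemish.
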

\begin{proof}
Set $\alpha(e) := \pi(e)^{-1/2}$ for $e\in E$.  By Lemma~\ref{SA1} and the hypothesis of edge weights uniformly bounded below, the hypotheses of Theorem~\ref{SC1} are satisfied, and by Theorem~\ref{Xcomp1}, for $x,y\in G$, $d_I(x,y) \leq d_E(x,y) = d_\alpha(x,y)$, so the result follows from Theorem~\ref{SC1}.
\end{proof}
\begin{cor} \label{cor3}
Let $(\Gamma,\pi)$ be a weighted graph such that the vertex weights $(\pi_x)_{x\in G}$ are bounded below.  If there exists $x_0\in G$ and $r_0>0$ such that 

\begin{equation*}
\int^\infty_{r_0} \frac{r}{\log|B_{d_V}(x_0,r)|}dr=+\infty,
\end{equation*}
\\
then $(\Gamma,\pi)$ is stochastically complete. 
\end{cor}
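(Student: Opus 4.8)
The plan is to realize this statement as a direct specialization of Theorem~\ref{SC1}, exactly parallel to the proof of Corollary~\ref{cor2} but with $d_V$ replacing $d_E$. The first step is to choose the edge weights that induce $d_V$: I would set $\alpha(e) := \pi_{\overline{e}}^{-1/2} \wedge \pi_{\underline{e}}^{-1/2}$ for each $e \in E$, so that by the definition of $d_V$ in Section~\ref{S3.3} the induced metric $d_\alpha$ coincides with $d_V$. This reduces everything to checking that these weights meet the two hypotheses on $(\alpha(e))_{e\in E}$ demanded by Theorem~\ref{SC1}.

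The adaptedness of $(\alpha(e))_{e\in E}$ is immediate from Lemma~\ref{SA2}, which asserts precisely that these weights (and hence $d_V$) are adapted to the VSRW, with no further assumptions needed. The second hypothesis, the uniform bound $\alpha(e) \leq D_\alpha$, is the only place where the standing assumption that the vertex weights $(\pi_x)_{x\in G}$ are bounded below enters: if $\pi_x \geq C_\pi > 0$ for all $x$, then $\alpha(e) = \pi_{\overline{e}}^{-1/2} \wedge \pi_{\underline{e}}^{-1/2} \leq \pi_{\overline{e}}^{-1/2} \leq C_\pi^{-1/2} =: D_\alpha$.

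With both hypotheses in hand, I would apply Theorem~\ref{SC1} with $\rho := d_V$. Since $\rho = d_V = d_\alpha$, the required comparison $\rho(x,y) \leq d_\alpha(x,y)$ holds trivially, in fact with equality, and because $m_G$ is counting measure the integral hypothesis $\int_{r_0}^\infty r/\log|B_{d_V}(x_0,r)|\,dr = +\infty$ is literally the volume growth condition appearing in Theorem~\ref{SC1}, with $m_G(B_\rho(x_0,r)) = |B_{d_V}(x_0,r)|$. Stochastic completeness then follows immediately.

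I do not anticipate a substantive obstacle, as this is a corollary by construction; the one point deserving care is the verification of the uniform upper bound on $\alpha$, since this is exactly the role played by the boundedness-below hypothesis on the vertex weights and cannot be supplied by adaptedness alone.
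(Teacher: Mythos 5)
Your proposal is correct and follows essentially the same route as the paper: the paper proves Corollary~\ref{cor3} by combining Lemma~\ref{SA2} with Corollary~\ref{cor1}, and Corollary~\ref{cor1} is itself exactly the specialization of Theorem~\ref{SC1} to bounded adapted edge weights that you carry out by hand with $\alpha(e) = \pi_{\overline{e}}^{-1/2}\wedge\pi_{\underline{e}}^{-1/2}$. You correctly identify the one substantive point --- that the lower bound $\pi_x \geq C_\pi$ is needed precisely to get the uniform upper bound $\alpha(e) \leq C_\pi^{-1/2}$ (equivalently, the bound on $d_V$ across edges required by Corollary~\ref{cor1}) --- and your observation that $\rho = d_\alpha$ holds with equality, rather than merely $\rho \leq d_\rho$ via Lemma~\ref{rhodrho}, is a harmless simplification.
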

\begin{proof}
This is an immediate consequence of Lemma~\ref{SA2} and Corollary~\ref{cor1}.
\end{proof}

Given a weighted graph $(\Gamma,\pi)$, we define $d_W$ to be the metric induced by the edge weights $(1\wedge \pi_{\overline{e}}^{-1/2}\wedge\pi_{\underline{e}}^{-1/2})_{e\in E}$.  It is clear that $d_W \leq d_V$, so $d_W$ is always adapted, and also that $d_W(x,y) \leq 1$ whenever $x\sim y$.  

\begin{cor} \label{cor4}
Let $(\Gamma,\pi)$ be a weighted graph.  If there exists $x_0\in G$ and $r_0>0$ such that 

\begin{equation*}
\int^\infty_{r_0} \frac{r}{\log|B_{d_W}(x_0,r)|}dr=+\infty,
\end{equation*}
\\
then $(\Gamma,\pi)$ is stochastically complete. 
\end{cor}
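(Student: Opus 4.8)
The plan is to recognize that the present corollary is nothing more than the specialization of Corollary~\ref{cor1} to the metric $\rho = d_W$, with the role of the constant $D_\rho$ played by $1$. Corollary~\ref{cor1} asks exactly two things of the metric: that it be adapted to the VSRW, and that there exist $D_\rho > 0$ with $\rho(x,y) \leq D_\rho$ whenever $x \sim y$. The whole point of introducing $d_W$ is that \emph{both} of these hold with no additional hypothesis on $(\Gamma,\pi)$, in contrast to Corollary~\ref{cor2} (which assumes bounded vertex degree and edge weights bounded below) and Corollary~\ref{cor3} (which assumes vertex weights bounded below). So my task reduces to confirming the two properties and then quoting Corollary~\ref{cor1}.

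For adaptedness I would argue by monotonicity of the adaptedness sum in the underlying edge weights. The weights $(1\wedge \pi_{\overline{e}}^{-1/2}\wedge\pi_{\underline{e}}^{-1/2})_{e\in E}$ defining $d_W$ are obtained from the weights $(\pi_{\overline{e}}^{-1/2}\wedge\pi_{\underline{e}}^{-1/2})_{e\in E}$ defining $d_V$ by taking a further minimum with $1$, and the induced metric $d_{2,\alpha}$ is monotone increasing in $\alpha$ (each path sum $\sum_{e\in\gamma}\alpha(e)^{1/2}$ is monotone in $\alpha$, hence so is the infimum). Therefore $d_W \leq d_V$ pointwise, and for each $x\in G$ we get $\sum_{y\sim x}\pi_{xy}d_W^2(x,y) \leq \sum_{y\sim x}\pi_{xy}d_V^2(x,y) \leq C_V$, the last bound being the adaptedness of $d_V$ furnished by Lemma~\ref{SA2}. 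Thus $d_W$ is adapted with constant $C_V$.

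For the edge-length bound I would use the single-edge estimate for an induced metric. Writing $\alpha(e) := 1\wedge \pi_{\overline{e}}^{-1/2}\wedge\pi_{\underline{e}}^{-1/2}$, the one-edge path gives $d_W(\overline{e},\underline{e}) \leq \alpha(e)^{1/2} \leq 1$ for every $e\in E$, since $\alpha(e)\leq 1$ by construction, so $d_W(x,y)\leq 1$ whenever $x\sim y$ and $D_\rho = 1$ is admissible. With both hypotheses of Corollary~\ref{cor1} verified for $\rho = d_W$, the assumed volume growth condition on $d_W$ is precisely the hypothesis of Corollary~\ref{cor1}, which then yields stochastic completeness of $(\Gamma,\pi)$. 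I do not expect any genuine obstacle: the mathematical content sits entirely in the definition of $d_W$, and once the two routine monotonicity observations above are recorded, the proof is a one-line invocation of Corollary~\ref{cor1}.
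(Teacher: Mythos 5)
Your proposal is correct and matches the paper's proof exactly: the paper establishes $d_W \leq d_V$ (hence adaptedness of $d_W$ via Lemma~\ref{SA2}) and the bound $d_W(x,y)\leq 1$ for $x\sim y$ in the sentence preceding the corollary, and then invokes Corollary~\ref{cor1}, which is precisely your argument. Your additional care in spelling out the monotonicity of induced metrics in the edge weights is a fine elaboration of what the paper calls ``clear,'' but it is the same route.
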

\begin{proof}
This is an immediate consequence of Corollary~\ref{cor1}.
\end{proof}

{\bf Remarks:}  1.  The analogue of Sturm's result relating stochastic completeness with volume growth in the intrinsic metric is false; see Example 2 in Section~\ref{S5.1} for an example where the VSRW is stochastically incomplete but where $\log |B_{d_I}(x,r)|\asymp r\log r$.  On graphs with unbounded vertex degree and sufficiently poor connectivity properties (in the sense that there are very few geodesic paths joining vertices), the intrinsic metric is not adapted to the VSRW.  Indeed, in Section~\ref{S3.4} we gave an example in which the VSRW moves very quickly but the intrinsic metric is approximately equal to the graph metric. Consequently, it appears that the intrinsic metric is not the right metric to use for studying stochastic completeness of graphs, and that adaptedness or strong adaptedness is the relevant condition, particularly in light of Corollary~\ref{cor1}.  It would be interesting to determine conditions under which the intrinsic metric is adapted to the VSRW for graphs with unbounded vertex degree. \\

2.  None of the Theorems and Corollaries in this section use the notion of strong adaptedness.  However, the lower bound appearing in the definition of strong adaptedness ensures that the metric cannot be unnecessarily small, which would in turn cause the volume growth to be unnecessarily large.  If the metric $\rho$ fails to be strongly adapted, the resulting criteria one obtains for stochastic completeness may be far from optimal.  See the remark following Example 2 in Section~\ref{S5.1}. \\

3.  These techniques and results are also applicable to the general continuous time simple walk with generator $\mathcal{L}_\theta$ defined in Section~\ref{S3.5}; call this process $(Z_t)_{t\geq 0}$.  Here one works with edge weights $(\beta(e))_{e\in E}$ which are bounded above and adapted to this random walk, i.e., weights satisfying, for all $x\in G$,

\begin{equation*}
\frac{1}{\theta_x}\sum_{e\in E(x)}\pi(e)\beta^2(e) \leq C_\beta.
\end{equation*}
\\
One constructs the associated Brownian motion as in Theorem~\ref{sync2}, and sets $\ell(x_{\text{loop}})=1$, $p(x_{\text{loop}})=\frac{1}{2}\theta_x$, $\omega(x_{\text{loop}})=1$ for $x\in G$.  The techniques of this section allow one to prove the analogue of Theorem~\ref{SC1} for $(Z_t)_{t\geq 0}$, using the invariant measure described in Section~\ref{S3.5} and considering volume growth in any metric $\rho$ such that $\rho(x,y) \leq d_\beta(x,y)$ for all $x,y\in G$.  However, this criteria may not be very useful.  For example, if $\theta_x = \pi_x$, the associated random walk (which is referred to as the constant-speed continuous time simple random walk, or CSRW, and which jumps at exponentially distributed times with mean $1$) is always stochastically complete, regardless of volume growth.

\subsection{Examples} \label{S5.1}

In this section, we present several examples which demonstrate how one can use Theorem~\ref{SC1} and its various corollaries to quickly determine sharp stochastic completeness criteria for various graphs. \\

{\bf Example 1:} Stochastic completeness of the birth-death chain.  We have $\Gamma := (\mathbb{Z}_+,E_{nn})$, where $E_{nn}:=\{\{n,n+1\}:n\in\mathbb{Z}_+\}$, and we assign weights $(\pi(e))_{e\in E}$ satisfying $\pi_{n,n+1} = (n+1)^2\log_+^\beta(n+1)$ for $0\leq \beta<2$, where $\log_+(x) := \log(x)\vee 1$.  We use the metric $d_V$ for ease of computation.  As $R\to\infty$,

\begin{equation*}
d_V(0,R) \sim \sum^{R}_{j=1} \frac{1}{2j\log_+^{\beta/2}(j)} \sim \frac{1}{2}\log^{1-\beta/2}(R),
\end{equation*}
\\
and hence $|B_{d_V}(0,r)| \asymp e^{(2r)^{2/(2-\beta)}}$.  By Corollary~\ref{cor2} or Corollary~\ref{cor3}, we conclude that if $0\leq \beta\leq 1$, then $(\Gamma,\pi)$ is stochastically complete.  This result may be seen to be sharp; by Example 4.12 of \cite{Wo}, $(\Gamma,\pi)$ is stochastically incomplete if and only if

\begin{equation} \label{SCBD}
\sum^\infty_{r=0} \frac{r}{\pi_{r,r+1}} < +\infty.
\end{equation}
\\
Plugging in $\pi_{n,n+1} = (n+1)^2\log_+^\beta(n+1)$ with $0\leq \beta<2$, we see that \eqref{SCBD} occurs precisely when $1<\beta<2$. \\

{\bf Example 2:}  Stochastic completeness of spherically symmetric trees. \\

The construction of these objects was done in Example 2 in Section~\ref{S3.4}.  Let $\Gamma_\alpha$ be a tree rooted at $x_0$, with all vertices at a graph distance of $r$ from $x_0$ having $k(r):= \lfloor r^\alpha \rfloor$ neighbors at a graph distance of $r+1$ from $x_0$ for $0<\alpha<2$.  We equip these graphs with the standard weights.  Clearly vertex degrees are unbounded in this setting. \\

As before, we use the adapted metric $d_V$ for ease of computation.  We previously computed that if $x_R\in\Gamma_\alpha$ satisfies $d(x_0,x_R)=R$, then $d_V(x_0,x_R) \asymp R^{1-\alpha/2}$.  Hence if $d_V(x_0,y)\asymp r$, then $d(x_0,y)\asymp r^{2/(2-\alpha)}$.  As well, $|B_d(x_0,r)|=\sum^r_{j=0}\prod^{j}_{i=0}k(i)$, so that

\begin{align*}
\log |B_{d_V}(x_0,r)| \asymp \sum^{r^{2/(2-\alpha)}}_{j=1} \log j^\alpha \asymp r^{2/(2-\alpha)}\log r.
\end{align*}
\\ 
From Corollary~\ref{cor2}, we conclude that $\Gamma_\alpha$ is stochastically complete if $\alpha\leq 1$.  By Remark 4.3 of \cite{Wo}, the exponent $1$ is sharp; if $\alpha>1$ then $\Gamma_\alpha$ is stochastically incomplete. \\

Note that in this setting the metric $d_V$ is strongly adapted.  Now, suppose that we work on $\Gamma_1$ with a different choice of metric.  Given $1<\beta<2$, we consider the metric $d_\beta$ induced by the edge weights $c_\beta(e)=r^{-\beta/2}$ if $d(x_0,\overline{e})\wedge d(x_0,\underline{e})=r$; these weights are adapted but not strongly adapted.  Proceeding as above, we obtain that $\log |B_{d_\beta}(x_0,r)| \asymp r^{2/(2-\beta)}\log r$; this gives

\begin{equation*}
 \int^\infty_{r_0} \frac{r}{\log |B_{d_\beta}(x_0,r)|}dr < +\infty,
\end{equation*}
\\
even though $\Gamma_1$ is stochastically complete.  Consequently, this adapted metric does not give sharp volume growth criteria. \\

This family of graphs also yields an interesting counterexample.  We previously computed that $d_I(x_0,x_R) \asymp R$, and it follows that for any $\alpha>1$

\begin{equation*}
\log |B_{d_I}(x_0,r)| \asymp \sum^{r}_{j=1} \log j^\alpha \asymp r\log r,
\end{equation*}
\\ 
from which it follows that

\begin{equation*}
\int^\infty_{r_0} \frac{r}{\log|B_{d_I}(x_0,r)|}dr=+\infty,
\end{equation*}
\\
even though $\Gamma_\alpha$ is stochastically incomplete.  Comparing this result with Theorem~\ref{Sturm} shows that the relationship between volume growth in the intrinsic metric and stochastic completeness on graphs is different than the corresponding result for local Dirichlet spaces. \\

{\bf Acknowledgements: }The author thanks the following individuals for their detailed reading of the manuscript and helpful comments.  Martin Barlow suggested this problem, answered many questions relating to the theory of Dirichlet spaces, and provided detailed feedback on many revisions of this paper.  Ed Perkins gave several useful suggestions.  Pat Fitzsimmons informed the author of the error in the main result of \cite{Br}, which rendered the original proof of Lemma~\ref{SClem2} invalid.

\end{document}